%
%
%
%
\def\draftdate{\today}

\documentclass{amsart}
\usepackage{amssymb}
\usepackage{xy}
\usepackage{accents}
\usepackage{bm}
\usepackage{color}

\newcommand{\cR}{\underline{\uR}}
\newcommand{\uR}{\underline{R}}
\newcommand{\cf}{\underline{\uf}}
\newcommand{\uf}{\underline{f}}
\newcommand{\cA}{\underline{\uA}}
\newcommand{\uA}{\underline{A}}
\newcommand{\cB}{\underline{\uB}}
\newcommand{\uB}{\underline{B}}
\newcommand{\cM}{\underline{\uM}}
\newcommand{\uM}{\underline{M}}

\newcommand{\uN}{\underline{N}}
\newcommand{\cT}{\underline{\uT}}
\newcommand{\uT}{\underline{T}}
\newcommand{\cX}{\underline{\uX}}
\newcommand{\uX}{\underline{X}}
\newcommand{\cY}{\underline{\uY}}
\newcommand{\uY}{\underline{Y}}
\newcommand{\cZ}{\underline{\uZ}}
\newcommand{\uZ}{\underline{Z}}

\newcommand{\uFp}{\underline{\bF}{}_{p}}
\newcommand{\cMUP}{\underline{\uMUP}}
\newcommand{\uMUP}{\underline{MUP}}
\newcommand{\cMUC}{\underline{\uMUC}}
\newcommand{\uMUC}{\underline{mu}}
\newcommand{\uBU}{\underline{BU}}
\newcommand{\uBUZ}{\Omega^{\infty}\underline{KU}}
\newcommand{\uBUC}{\Omega^{\infty}\underline{bu}}
\newcommand{\uMU}{\underline{MU}}
\newcommand{\uRep}{\underline{RU}}

\newcommand{\upi}{\underline{\pi}}
\newcommand{\uKU}{\underline{KU}}
\newcommand{\uG}{\underline{G}^{\op}}
\newcommand{\ubZ}{\underline{\bZ}^{\op}}
\newcommand{\uOZN}{(\Omega^{\infty}H\underline{\bZ}^{\op})_{\geq 0}}

\newcommand{\LTHH}{\bL THH}

\newcommand{\ssdot}{\bullet}
\newcommand{\supdot}{^\ssdot}
\newcommand{\subdot}{_\ssdot}

\newcommand{\overto}[1]{\xrightarrow{\,#1\,}}
\newcommand{\overfrom}[1]{\xleftarrow{\,#1\,}}

\let\Lim\lim
\def\lim{\Lim\nolimits}

\newcommand{\Spectra}{\aS}
\newcommand{\EQC}{\aE_{\Phi}}
\newcommand{\nEQC}{\aE}
\newcommand{\PreCyc}{\mathrm{Cyc}}

\mathchardef\varDelta="7101

\newcommand{\phat}[1][{p}]{^{\wedge}_{#1}}

\let\iso\cong
\let\sma\wedge

\renewcommand{\to}{\mathchoice{\longrightarrow}{\rightarrow}{\rightarrow}{\rightarrow}}
\newcommand{\from}{\mathchoice{\longleftarrow}{\leftarrow}{\leftarrow}{\leftarrow}}

\DeclareMathAlphabet{\catsymbfont}{U}{rsfs}{m}{n}
\newcommand{\aA}{{\catsymbfont{A}}}
\newcommand{\aAlg}{{\aA lg}}
\newcommand{\aC}{{\catsymbfont{C}}}
\newcommand{\aCom}{{\aC om}}
\newcommand{\aE}{{\catsymbfont{E}}}
\newcommand{\aF}{{\catsymbfont{F}}}

\newcommand{\aM}{{\catsymbfont{M}}}
\newcommand{\aMod}{{\aM\!od}}
\newcommand{\aP}{{\catsymbfont{P}}}
\newcommand{\aPair}{{\aP air}}
\newcommand{\aBPair}{{\aP air}_{\mathrm{pb}}}
\newcommand{\aS}{{\catsymbfont{S}}}

\newcommand{\aV}{{\catsymbfont{V}}}

\newcommand{\bC}{{\mathbb{C}}}

\newcommand{\bF}{{\mathbb{F}}}

\newcommand{\bL}{{\mathbb{L}}}
\newcommand{\bN}{{\mathbb{N}}}
\newcommand{\bS}{{\mathbb{S}}}

\newcommand{\bR}{{\mathbb{R}}}
\newcommand{\bT}{{\mathbb{T}}}
\newcommand{\bZ}{{\mathbb{Z}}}

\def\quickop#1{\expandafter\DeclareMathOperator\csname
#1\endcsname{#1}}
\quickop{id}\quickop{Id}\quickop{holim}\quickop{hocolim}\quickop{op}
\quickop{co}\quickop{Ar}\quickop{sing}\quickop{Hom}\quickop{w}\quickop{Ho}
\quickop{ob}\quickop{diag}\quickop{Cyc}\quickop{Fib}\quickop{Cof}
\quickop{tr}\quickop{trc}\quickop{Gal}\quickop{colim}\quickop{triv}
\quickop{red}\quickop{hoeq}\quickop{inc}\quickop{cyc}
\quickop{fin}\quickop{cy}\quickop{Fix}\quickop{Tot}\quickop{res}\quickop{Sp}
\quickop{Span}\quickop{Gpd}\quickop{Cat}\quickop{Fin}\quickop{Mod}\quickop{CAlg}
\quickop{Vect}\quickop{Fun}\quickop{can}\quickop{HoEq}

\DeclareMathOperator*\lcolim{colim}

\newtheorem{main}{Theorem}

\numberwithin{equation}{section}
\newtheorem{thm}[equation]{Theorem}
\newtheorem*{thm*}{Theorem}

\newtheorem{prop}[equation]{Proposition}

\theoremstyle{definition}
\newtheorem{defn}[equation]{Definition}
\newtheorem{notn}[equation]{Notation}
\newtheorem{ter}[equation]{Terminology}
\newtheorem{cons}[equation]{Construction}

\newtheorem{rem}[equation]{Remark}
\newtheorem{example}[equation]{Example}
\newtheorem{warning}[equation]{Warning}

\xyoption{arrow}
\xyoption{curve}
\xyoption{matrix}
\xyoption{cmtip}
\SelectTips{cm}{}

\newcommand{\term}[1]{\textit{#1}}

\newdir{ >}{{}*!/-5pt/\dir{>}}

\bibliographystyle{plain}

\begin{document}

\title[Relative cyclotomic structures]%
{Relative cyclotomic structures and equivariant complex cobordism}

\author{Andrew J. Blumberg}
\address{Department of Mathematics, Columbia University, 
New York, NY \ 10027}
\email{blumberg@math.columbia.edu}
\thanks{The first author was supported in part by NSF grants
DMS-2104420, DMS-2405029}
\author{Michael A. Mandell}
\address{Department of Mathematics, Indiana University,
Bloomington, IN \ 47405}
\email{mmandell@iu.edu}
\thanks{The second author was supported in part by NSF grants
DMS-2104348, DMS-2405030}
\author{Allen Yuan}
\address{Department of Mathematics, Northwestern University,
Evanston, IL \ 60208}
\email{allenyuan@northwestern.edu}
\thanks{The third author was supported in part by NSF grant DMS-2002029}

\date{\draftdate}
\subjclass[2010]{Primary 19D55; Secondary 55P43, 55P91}

\begin{abstract}
We describe a structure on a commutative ring (pre)cyclotomic
spectrum $\cR$ that gives rise to a (pre)cyclotomic structure on
topological Hochschild homology ($THH$) relative to its underlying
commutative ring spectrum.
This lets us construct $TC$ relative to $\cR$, denoted $TC^{\cR}$, and
we prove some descent results relating $TC^{\cR}$ and $TC$.
We explore several examples of this structure on familiar
$\bT$-equivariant commutative ring spectra including the periodic $\mathbb{T}$-equivariant
complex cobordism spectrum $MUP_{\mathbb{T}}$ and a new (connective)
equivariant version of the complex cobordism spectrum $MU$.
\end{abstract}

\maketitle

\section{Introduction}

The remarkable success of trace methods over the past 30 years derives
from the relationship between algebraic $K$-theory and topological
cyclic homology ($TC$).  In contrast to algebraic constructions of
cyclic homology, $TC$ intrinsically depends on topology, namely the
cyclotomic structure on topological Hochschild homology ($THH$).  The
cyclotomic structure ultimately depends on working over the sphere
spectrum $\bS$; this structure does not exist in algebra, working over
the integers $\bZ$.  Moreover, the surprising use of $THH$ and $TC$ in
recent seminal work of Bhatt-Morrow-Scholze~\cite{BMS, BMS2} on
$p$-adic Hodge theory reveals another fundamental role of the
cyclotomic structure in modern geometry.

The construction of $THH$ makes sense relative to any commutative ring
orthogonal spectrum $R$, but the cyclotomic structure does not.  The
search for cyclotomic structures on relative $THH$ goes back to the
1990s.  From the perspective of trace methods, relative cyclotomic
constructions were supposed to give descent spectral sequences
computing the absolute construction for further algebraic $K$-theory
computations.  More recently,~\cite{BMS2} constructed cyclotomic
structures relative to the group ring $\bS[t]$ and used them to build
Breuil-Kisin modules.  New work in Floer homotopy theory shows that
spectral Fukaya categories will often come with enrichments over some
form of complex cobordism; relative cyclotomic structures here are
hoped to correspond to interesting and effective structure on
symplectic cohomology.

Previous work of the first and second author and collaborators
\cite[\S7]{ABGHLM} studies the problem of constructing cyclotomic
structures on relative $THH$.  Recall from~\cite[4.1]{BM-cycl} that a
$p$-precyclotomic structure on a $\bT$-equivariant orthogonal spectrum
$\uX$ consists of a $\bT$-equivariant 
map
\[
\rho^{*}\Phi^{C_{p}}\uX\to \uX
\]
where $\rho^{*}$ is change of groups along the $p$th root isomorphism
$\rho \colon \bT\overto{\iso} \bT/C_{p}$ and $\Phi^{C_{p}}$ denotes
the point-set multiplicative geometric fixed point functor
of~\cite[V.4.3]{MM}.  The $p$-precyclotomic spectrum is $p$-cyclotomic
if the induced map from the derived geometric fixed points to $\uX$ is
an $\aF_{p}$-equivalence where $\aF_{p}$ is the family of $p$-groups
$C_{p^{n}}<\bT$.  (We review these and related structures, including
associative and commutative ring $p$-(pre)cyclotomic structures, in
Section~\ref{sec:cyclotomic}.)  In this paper, we typically denote a
$p$-precyclotomic spectrum with a double underlined symbol, e.g.,
$\cR$, and use the notational shorthand of the single underlined
symbol $\uR$ to denote its underlying equivariant spectrum (for the
group $\bT$ or sometimes $C_{p}$) and the non-underlined symbol $R$
for the underlying non-equivariant spectrum.

Cyclotomic structures have been the main focus in the literature on
trace methods because (absolute) $THH$ naturally can be endowed with
such a structure.  However,  $p$-precyclotomic spectra provide
a minimal structure sufficient for constructing ($p$-typical) $TC$ (e.g.,
see~\cite[6.7]{BM-cycl}).  We focus on precyclotomic spectra in this
paper because most of the structures we describe on examples of
interest are not $p$-cyclotomic but only $p$-precyclotomic. (See
Section~\ref{sec:phiinfty} for a discussion on converting
$p$-precyclotomic to $p$-cyclotomic spectra with equivalent $TC$.)

Given a commutative ring
$p$-(pre)cyclotomic spectrum $\cR$, we then have a canonical
counit map $THH(R)\to \uR$ of $\bT$-equivariant commutative ring
orthogonal spectra. 
If this map is $p$-(pre)cyclotomic, then the $R$-relative topological
Hochschild homology $THH^{R}(-)$ has a natural $p$-(pre)cyclotomic
structure.  This condition is not formal (see \cite[p.~2146]{ABGHLM})
and it is not a priori clear when to expect it to hold.

The purpose of this paper is to give a new framework for constructing
$p$-pre\-cyclotomic structures on relative $THH$, which implies the criterion
of \cite[7.6]{ABGHLM} and which we can check in some interesting
examples.  Our framework depends on a new self map associated to a
commutative ring $p$-precyclotomic spectrum $\cR$.  In
Section~\ref{sec:power}, we construct a self map
$\Psi \colon R \to R$ in the homotopy category of non-equivariant commutative
ring orthogonal spectra.  The map $\Psi$ is the composite of the
multiplicative transfer
\[
R \to \Phi^{C_p} \uR
\]
(see also Definition~\ref{def:power-operation})
and the non-equivariant map underlying the $p$-precyclotomic structure map
\[
\rho^{*}\Phi^{C_{p}}\uR\to \uR.
\]
We refer to $\Psi$ as the $p$-cyclotomic power operation and prove the
following theorem, which we state jointly in the $p$-cyclotomic and
$p$-precyclotomic cases.

\begin{main}\label{main:crit}
Let $\cR$ be a commutative ring $p$-(pre)cyclotomic spectrum, let $R$
denote its underlying non-equivariant commutative ring orthogonal spectrum, and
assume that $\Psi \colon R\to R$ is the identity in the homotopy
category of commutative ring orthogonal spectra.
Then $\cR$-relative topological Hochschild homology $THH^{\cR}(-)$ can
be given the structure of a $p$-(pre)cyclotomic $\cR$-module.
\end{main}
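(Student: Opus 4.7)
The plan is to verify the criterion of \cite[7.6]{ABGHLM} in a suitable
homotopy-coherent form: under the hypothesis on $\Psi$, the counit
$\epsilon \colon THH(R) \to \uR$ admits a promotion to a
$p$-(pre)cyclotomic map of $\bT$-equivariant commutative ring spectra,
after which the standard construction of $THH^{\cR}(A)$ as a two-sided
bar construction over $\uR$-modules inherits a $p$-(pre)cyclotomic
$\cR$-module structure by gluing the precyclotomic data on $\cR$, the
refined counit, and the standard cyclotomic structure on $THH$.

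To identify the obstruction, consider the square
\[
\xymatrix{
  \rho^{*}\Phi^{C_{p}} THH(R) \ar[r]^-{\rho^{*}\Phi^{C_{p}}\epsilon}
    \ar[d]_-{\sigma_{THH(R)}}
  & \rho^{*}\Phi^{C_{p}}\uR \ar[d]^-{\sigma_{\uR}} \\
  THH(R) \ar[r]_-{\epsilon}
  & \uR
}
\]
whose commutativity expresses that $\epsilon$ is $p$-(pre)cyclotomic.
Both legs are maps of commutative ring spectra, and their equality can
be tested after precomposition with the canonical map
$R \to \rho^{*}\Phi^{C_{p}} THH(R)$ built from the unit
$\eta \colon R \to THH(R)$ and the multiplicative transfer. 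Tracing the
two resulting composites $R \to \uR$ in the homotopy category of
commutative ring orthogonal spectra: the left-then-bottom composite
simplifies to $\mathrm{id}_{R}$, using $\epsilon \circ \eta = \mathrm{id}$
together with the fact that ring maps preserve units, while the
top-then-right composite reduces, via naturality of the multiplicative
transfer applied to the ring map $\epsilon$, to the composite of the
multiplicative transfer $R \to \Phi^{C_{p}}\uR$ with the non-equivariant
map underlying $\sigma_{\uR}$, which is $\Psi$ by definition. The
hypothesis $\Psi \simeq \mathrm{id}$ therefore supplies the needed
commutativity.

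The main obstacle is rigidification: the hypothesis only furnishes a
homotopy in the non-equivariant homotopy category of commutative ring
orthogonal spectra, whereas the construction of the precyclotomic
structure on $THH^{\cR}(-)$ requires coherent $\bT$-equivariant data
on $\epsilon$ (and subsequently on the bar construction). To bridge
this gap I would analyze the moduli space of precyclotomic refinements
of $\epsilon$, exploiting the freeness of $THH(R)$ as a
$\bT$-equivariant commutative ring spectrum to lift the non-equivariant
homotopy to the required equivariant coherences; once such a refinement
is fixed, standard arguments verify the $p$-cyclotomic condition whenever
$\cR$ is $p$-cyclotomic. This rigidification step is what strengthens
\cite[7.6]{ABGHLM} to the new framework and makes it applicable to the
examples of interest, where the strict on-the-nose cyclotomicity of
$\epsilon$ fails but the homotopical identity $\Psi \simeq \mathrm{id}$
can be checked.
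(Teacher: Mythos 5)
Your overall strategy matches the paper's: identify the obstruction to promoting the counit $THH(R)\to\uR$ to a precyclotomic map as the homotopy class of $\Psi$, and then, once such a promotion exists, build the precyclotomic structure on $THH^{\cR}(A)$. Your identification of the two composites $R\to\uR$ (one being $\mathrm{id}$, the other $\Psi$) is also what Proposition~\ref{prop:Psi} records. However, there are two substantive gaps where your sketch points at the right direction but does not supply the actual mechanism, and where naive versions of your proposed steps fail.

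First, on rigidification: you propose to ``analyze the moduli space of precyclotomic refinements of $\epsilon$, exploiting the freeness of $THH(R)$.'' Freeness gives control of $\aCom^{\bT}(THH(R),\uR)$ via Proposition~\ref{prop:tensor}, but it does not control maps of precyclotomic spectra, and $THH(R)$ is in general \emph{not} cofibrant in the category of commutative ring precyclotomic spectra (even when $R$ is cofibrant). The paper must therefore pass to a cofibrant approximation $\cT\to THH(R)$ in that category, and then invoke the derived mapping space characterization of Proposition~\ref{prop:mappingspace} (a homotopy equalizer of $\aCom^{\bT}$-spaces); a precyclotomic map is not merely a ring map for which the relevant square commutes up to homotopy, but a point of the strict equalizer, and one has to pick a pair $(g,H)$ with $H$ a path in the homotopy equalizer. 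Without the model structures of~\cite{BM-cycl23} and this cofibrant replacement, ``the moduli space of refinements'' is not under control, and the promotion need not exist as a strict map out of $THH(R)$.

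Second, on building the structure once a lift exists: you assert that $THH^{\cR}(A)$ ``inherits a $p$-(pre)cyclotomic $\cR$-module structure by gluing the precyclotomic data.'' This hides the real content of the proof. After replacing $THH(R)$ by $\cT$ and obtaining $g\colon\cT\to\cR$, the derived $THH^{\cR}(A)\simeq THH(A)\sma^{\bL}_{THH(R)}\uR$ has to be re-modeled as $THH(A)\sma_{\uT}\uM$ for a cofibrant precyclotomic $\cT$-module approximation $\cM$ of $g^{*}\cR$, and the zigzag relating these passes through a $\uT\otimes I$-module built from the homotopy $G$. Producing the precyclotomic structure map then requires that the canonical map $\Phi\uN\sma_{\Phi\uT}\Phi\uM\to\Phi(\uN\sma_{\uT}\uM)$ be an isomorphism, which is exactly where the class $\EQC$ and the results of~\cite{BM-cycl23} enter; this is not automatic for arbitrary relative smash products. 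Your description omits both the need for $\cM$ and the appeal to $\EQC$, so the ``gluing'' step as stated would not go through.
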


We then get $TC$ relative to $\cR$, $TC^{\cR}$, by applying to 
$THH^{\cR}$ the classic $p$-typical topological cyclic homology
construction $TC(-;p)$ (see for example~\cite[6.3]{BM-cycl}).  For
$\cR=\bS$ with its canonical cyclotomic structure, $TC^{\bS}(A)$ is the
usual ($p$-typical) $TC(A)$.

In the case when $\cR=THH(A)$ for some commutative ring orthogonal
spectrum $A$,
the $p$-cyclotomic power operation is the Adams operation $\psi^{p}$
of~\cite[\S10]{ABGHLM}, which is typically not the identity for
$A\not\simeq \bS$. (See Proposition~\ref{prop:PsiTHH}.)

We call a commutative ring $p$-precyclotomic spectrum along with a
choice of homotopy from $\Psi$ to the identity a
\term{$p$-(pre)cyclotomic base}.  Not all $\bT$-equivariant
commutative ring spectra admit such a structure: the standard
equivariant structure on complex $K$-theory cannot be a
$p$-(pre)cyclotomic base (see Example~\ref{ex:Kth}); however, in
Section~\ref{sec:examples}, we show that a number of interesting
spectra do. Notably, we show that $\bS[t]$, $\bF_p$, and $MUP$ (with
given $p$-(pre)cyclotomic structures explained there) all admit the
structure of a $p$-(pre)cyclotomic base. In addition, we construct
such a structure on $MU$ for a new precyclotomic structure, which we
denote $\cMUC$. (Note: the underlying $\bT$-equivariant spectrum
$\uMUC$ of
$\cMUC$ is not homotopical or geometric $\bT$-equivariant complex
cobordism, but is a new equivariant connective Thom spectrum with
underlying non-equivariant Thom spectrum $MU$.)

\begin{main}\label{main:MUP}
The genuine $\bT$-equivariant commutative ring orthogonal spectrum
$\uMUP=MUP_{\bT}$ admits the structure of a
$p$-precyclotomic base.  The commutative ring spectrum $MU$ admits the
structure of a $p$-precyclotomic base (with a new $\bT$-equivariant
structure). 
\end{main}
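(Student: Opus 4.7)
The plan is to apply Theorem~\ref{main:crit}: it suffices to construct commutative ring $p$-precyclotomic structures $\uMUP$ and $\cMUC$ and to verify that in each case the $p$-cyclotomic power operation $\Psi$ is the identity in the homotopy category of commutative ring orthogonal spectra.

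For $\uMUP=MUP_{\bT}$, I would first describe the $p$-precyclotomic structure using a tom~Dieck style identification of the geometric fixed points
\[
\Phi^{C_{p}}MUP_{\bT}\simeq MUP_{\bT/C_{p}},
\]
which after pulling back along the $p$th root isomorphism $\rho$ provides the required $\bT$-equivariant multiplicative structure map $\rho^{*}\Phi^{C_{p}}\uMUP\to \uMUP$. I would next identify the multiplicative transfer $MUP\to \Phi^{C_{p}}\uMUP$ as arising from the norm on equivariant complex vector bundles, namely as the $p$-fold external tensor power of the universal bundle pushed through the Thom spectrum construction. The composite $\Psi$ is then explicit, and to verify $\Psi\simeq \id$ as commutative ring orthogonal spectra, I would pass to $\pi_{*}$ and use that any ring self-map of $MUP$ is determined by its effect on the universal complex orientation, with $\pi_{*}MUP$ freely generated (over $\bZ$) by the classifying data of the universal formal group law. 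A direct check at the level of orientations should yield the identification.

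For the $MU$ case, the main task is to construct the new equivariant refinement $\cMUC$. I would construct $\uMUC$ as a $\bT$-equivariant Thom spectrum of an appropriate equivariant map into a connective equivariant form of $BU$, chosen so that the underlying non-equivariant commutative ring spectrum is $MU$ and so that both the candidate precyclotomic structure map $\rho^{*}\Phi^{C_{p}}\uMUC\to \uMUC$ and the multiplicative transfer $MU\to \Phi^{C_{p}}\uMUC$ admit compatible Thom-spectrum descriptions at the level of the universal bundle. Equipped with this, the same strategy as for $\uMUP$ applies: assemble both maps on the level of universal bundles to compute $\Psi$, then verify that the induced action on $\pi_{*}MU$ is the identity using the freeness of $MU_{*}$ and the uniqueness, up to normalization, of the complex orientation on $MU$.

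The main obstacle is constructing $\cMUC$: unlike $MUP_{\bT}$, there is no classical genuine $\bT$-equivariant refinement of connective $MU$ (whether homotopical or geometric) with geometric fixed points of the required form, so $\uMUC$ has to be built essentially from scratch as a new equivariant Thom spectrum with carefully chosen universal bundle, and the $p$-precyclotomic structure has to be built into the construction rather than deduced from a pre-existing formula. Once $\cMUC$ is in hand, verifying $\Psi\simeq \id$ should be comparatively formal in both cases, since the relevant ring spectra have homotopy controlled by the formal group law and are determined by their complex orientation.
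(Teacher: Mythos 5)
Your proposal diverges from the paper's argument in a few substantive ways, and at least one of its steps contains a genuine error.

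First, the assertion that there is a tom~Dieck-style \emph{equivalence} $\Phi^{C_p}MUP_{\bT}\simeq MUP_{\bT/C_p}$ is incorrect. The paper only constructs a map of $E_\infty^{\bT/C_p}$ ring spectra $\Phi^{C_p}(\uMUP_{\bT})\to \uMUP_{\bT/C_p}$ coming from the space-level map $V\mapsto V^{C_p}$ on representations, and explicitly observes that this map is not a weak equivalence---the two sides are not even abstractly equivalent (this is essentially the tom~Dieck splitting). It is because the map fails to be an equivalence that the resulting structure is only precyclotomic, not cyclotomic; your formulation would produce a cyclotomic structure and contradict the paper's phrasing of Theorem~B.

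Second, and more seriously, your verification strategy has a gap. You propose to check that $\Psi$ is the identity by computing on $\pi_*$ and invoking that ring self-maps of $MUP$ (resp.\ $MU$) are classified by the universal complex orientation. That reasoning, even when it works, only detects homotopy ring maps: it shows at best that $\Psi$ is homotopic to the identity as a map in the stable homotopy category compatible with the (weak) ring structure. But a precyclotomic base requires $\Psi\simeq\id$ in the homotopy category of \emph{commutative ring orthogonal spectra}, i.e., a homotopy in the $E_\infty$ mapping space. Controlling $\pi_0$ of that mapping space for $MU$ or $MUP$ is a considerably deeper question than identifying the effect on the formal group law, and you would need a substantially stronger input (some $E_\infty$ Thom-spectrum rigidity statement) than ``$\pi_*MU$ is free.'' The paper sidesteps this entirely: it identifies the multiplicative transfer $R\to\Phi^{C_p}\uR$ with the Thom spectrum map induced by the space-level transfer $X\to X^{C_p}$, and identifies $r$ with the Thom spectrum map induced by $X^{C_p}\to X$; both space-level maps are $\Omega^\infty$ of maps of spectra (on $\tau_{\geq 0}\uKU$ or its connective variant), and the composite of those spectrum-level maps is the identity in the stable category, so the composite of Thom spectra is the identity as an $E_\infty$ map. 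This gives the $E_\infty$ homotopy directly, with no appeal to orientations or homotopy groups.

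Third, you flag the construction of $\uMUC$ as the main obstacle but do not actually provide it. The paper constructs $\uBUC$ as the homotopy fiber of the essentially unique $E^{G}_\infty$-map
\[
\uBUZ\simeq\Omega^\infty\tau_{\geq 0}\uKU\to\Omega^\infty H\uRep
\]
(refining the projection onto $\underline{\pi}_0=\uRep$), and sets $\uMUC$ to be the Thom spectrum of the resulting $E^{G}_\infty$-map $\uBUC\to\uBUZ$; the precyclotomic structure map is then built from the compatibility of $\tau_{\geq 0}\uKU\to H\uRep$ with geometric fixed points and a specific map of Mackey functors. Without this construction the second half of Theorem~B does not go through, and the vague appeal to ``a carefully chosen universal bundle'' does not fill the gap.
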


Finally, we state a descent theorem (proved as
Theorem~\ref{thm:descent} in Section~\ref{sec:descent}) relating
relative $TC$ to absolute $TC$ for a connective $p$-precyclotomic base
$\cR$ and connective $R$-algebra $A$.  In the statement, we use the
commutative $R$-algebra structure on $A$ to get a commutative
$R^{(n+1)}$-algebra structure on $A$ (for all $n\geq -1$) by
restriction along the iterated multiplication $R^{(n+1)}\to R$.  We
argue in Section~\ref{sec:descent} that the Adams resolution of $\bS$
with respect to $R$, $R^{(\bullet+1)}$, has the canonical structure of 
a cosimplicial object in the category of $p$-precyclotomic bases,
$\cR^{(\bullet+1)}$, and using $TC^{(-)}$ as a functor of the
$p$-precyclotomic base, we get an augmented cosimplicial spectrum 
\[
TC^{\cR^{(\bullet+1)}}(A)
\]
(where for $\bullet=-1$, $\cR^{(\bullet+1)}=\bS$).

\begin{main}\label{main:descent}
Let $\cR$ be an $\aF_{p}$-connective $p$-precyclotomic base, whose
underlying commutative ring $p$-precyclotomic spectrum is cofibrant. Let
$A$ be a connective cofibrant commutative $R$-algebra.
The canonical map
\[
TC(A)\to \Tot( TC^{\cR^{(\bullet+1)}}(A))
\]
is a weak equivalence.
\end{main}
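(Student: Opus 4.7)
The plan is to split the theorem into two steps: first prove descent at the level of $THH$, then transfer the equivalence to $TC$.

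At the $THH$ level, I would prove that the canonical map
\[
THH(A) \overto{\simeq} \Tot\bigl(THH^{\cR^{(\bullet+1)}}(A)\bigr)
\]
is a weak equivalence of $p$-precyclotomic spectra. The cosimplicial object on the right is obtained by applying the functor $THH^{(-)}(A)$ to the Adams cosimplicial resolution $\cR^{(\bullet+1)}$ of $\bS$ in the category of $p$-precyclotomic bases (whose existence is to be established in Section~\ref{sec:descent}). Forgetting the precyclotomic structure, the underlying non-equivariant cosimplicial spectrum is the standard $R$-based Amitsur resolution of $THH(A)$. Under the connectivity hypotheses on $\cR$ and $A$, the Bousfield--Kan spectral sequence for the totalization converges, and its augmentation $THH(A)\to \Tot(THH^{\cR^{(\bullet+1)}}(A))$ is a weak equivalence. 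To upgrade to the precyclotomic level, the $\aF_p$-connectivity hypothesis on $\cR$ controls the connectivity of the geometric fixed points $\Phi^{C_{p^n}}(THH^{\cR^{(\bullet+1)}}(A))$ in each cosimplicial degree, ensuring convergence compatibly across all fixed-point levels so that the induced map on $\Phi^{C_{p^n}}$ is a weak equivalence.

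Next, I would pass from $THH$ to $TC$. Since $TC(-;p)$ is a finite homotopy limit constructed from the Frobenius and canonical maps on the precyclotomic structure, it commutes with totalizations of cosimplicial $p$-precyclotomic spectra, giving
\[
\Tot\bigl(TC^{\cR^{(\bullet+1)}}(A)\bigr)\simeq TC\bigl(\Tot(THH^{\cR^{(\bullet+1)}}(A))\bigr)\simeq TC(A),
\]
where the last equivalence uses the $THH$-level descent just established.

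The main obstacle is the convergence of the Amitsur totalization at the level of precyclotomic spectra. For ordinary connective spectra convergence is classical, but here we must track connectivity on every $C_{p^n}$-geometric fixed point simultaneously. This is exactly why the $\aF_p$-connectivity hypothesis on $\cR$ appears: it guarantees that the geometric fixed points of the cosimplicial object acquire a growing connectivity in each cosimplicial degree, which is what is needed for Bousfield--Kan convergence fixed-point-wise and hence for the equivariant totalization to compute the expected answer. A secondary technical point is identifying the cosimplicial structure on $\cR^{(\bullet+1)}$ coherently as a cosimplicial object in $p$-precyclotomic bases (rather than merely in $p$-precyclotomic spectra), so that the functor $TC^{(-)}$ of the base can be applied along the cosimplicial direction; this is where the cofibrancy hypothesis on the underlying commutative ring $p$-precyclotomic spectrum is used.
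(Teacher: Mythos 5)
Your high-level strategy matches the paper's: since $TC$ is built as a homotopy limit from categorical fixed points, and these commute with $\Tot$, it suffices to show that $THH(A)\to\Tot(THH^{\cR^{(\bullet+1)}}(A))$ is an $\aF_{p}$-equivalence, i.e.\ a weak equivalence on $C_{p^{n}}$-fixed point spectra for all $n$. That reduction is correct and is exactly the first move of the paper's proof.

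The gap is in how you establish the $THH$-level statement. You assert that the $\aF_{p}$-connectivity of $\cR$ ``controls the connectivity of the geometric fixed points $\Phi^{C_{p^{n}}}(THH^{\cR^{(\bullet+1)}}(A))$ in each cosimplicial degree, ensuring convergence compatibly across all fixed-point levels.'' This does not yet constitute a proof: connectivity estimates on $\Phi^{C_{p^{n}}}T^{q}$ for each cosimplicial degree $q$ would let you control $\Tot(\Phi^{C_{p^{n}}}T^{\bullet})$, but geometric fixed points do not commute with totalizations, so this tells you nothing \emph{a priori} about $(\Tot T^{\bullet})^{C_{p^{n}}}$, which is the quantity you actually need. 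The paper instead argues by induction on $n$ using the norm (isotropy separation) cofiber sequence
\[
(T^{\bullet})_{hC_{p^{n}}}\to (T^{\bullet})^{C_{p^{n}}}\to (\Phi T^{\bullet})^{C_{p^{n-1}}}.
\]
The homotopy orbits piece commutes with $\Tot$ because of the increasing connectivity of the cosimplicial filtration (this is where your Bousfield--Kan convergence observation is genuinely used), and the base case $n=0$ handles it. The crucial point you are missing is the identification of the third term: $(\Phi T^{\bullet})^{C_{p^{n-1}}}$ is, up to weak equivalence, the $C_{p^{n-1}}$-fixed points of the \emph{same descent problem with $\cR$ replaced by $\Phi\cR$}, so the inductive hypothesis applies to it. This is exactly why $\aF_{p}$-connectivity (i.e.\ connectivity of $\Phi^{n}\uR$ for all $n$) is the right hypothesis --- it guarantees that $\Phi\cR$, $\Phi^{2}\cR$, \dots\ all satisfy the connectivity needed for the base case --- not because it gives ``uniform convergence across fixed-point levels.'' Without the norm cofiber sequence and the induction that passes from $\cR$ to $\Phi\cR$, your argument does not close.
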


\subsection*{Acknowledgments}
The authors would like to thank Jeremy Hahn and Inbar Klang for useful
discussions.  They thank Mohammed Abouzaid for insisting that $MUP$
should have some kind of cyclotomic structure and helpful
conversations.  This work owes a lot to the previous collaboration of
the first two authors with Vigleik Angeltveit, Teena Gerhardt, Mike
Hill, and Tyler Lawson on cyclotomic structures via the norm; we thank
them for years of productive conversations.

\section{Equivariant preliminaries}\label{sec:equivariant}

In this section, we review the prerequisites in equivariant stable
homotopy theory needed for defining and working with precyclotomic
and cyclotomic spectra.  Although modern treatments of cyclotomic
spectra minimize the explicit use of genuine equivariant stable
homotopy theory in the setup and require none for end-users, no such
simplification can exist for precyclotomic spectra, especially in the
nonconnective setting.  Our main examples of interest, the 
cobordism spectra $\cMUP$ and $\cMUC$, have precyclotomic structures
that are not cyclotomic structures.

We take as our model for the equivariant stable category the point-set
category of $\bT$-equivariant orthogonal spectra indexed on the complete
universe 
\[
U=\bigoplus_{n\in \bZ}\bC(n)^{\infty}
\]
where $\bC(0)$ denotes the complex numbers with the trivial $\bT$-action,
$\bC(1)$ denotes the complex numbers with its natural $\bT$-action (as
the unit complex numbers), and in general, for any $n$, the element
$\zeta \in \bT$ acts on $\bC(n)$ as multiplication by $\zeta^{n}$.
For such a
spectrum $T$, homotopy groups are defined by 
\[
\pi^{H}_{q}T=\lcolim_{V<U}\lcolim_{n\geq -q} \pi_{n+q}((\Omega^{V}T(V\oplus \bR^{n}))^{H})
\]
for $H<\bT$ a closed subgroup, with the colimit over the finite
dimensional $\bT$-stable real vector subspaces of $U$. We work with
the family $\aF_{\fin}$ 
of finite subgroups of $\bT$ and the family $\aF_{p}\subset \aF_{\fin}$
of $p$-subgroups of $\bT$.  For any family $\aF$, an
\term{$\aF$-equivalence} is a map that induces
an isomorphism on $\pi_{*}^{H}$ for $H\in \aF$
(but not necessarily for $H\notin\aF$).  The $\aF$-equivalences are
the weak equivalences in the $\aF$-model
structures in Theorem~C 
of~\cite{BM-cycl23}, which are ``convenient'' (in the sense
of~\cite{Shipley-Convenient}) in that for these model structures,
cofibrant $\bT$-equivariant commutative ring orthogonal spectra have
their underlying $\bT$-equivariant ring orthogonal spectra cofibrant. 
They also have the usual free/forgetful Quillen adjunctions:
the forgetful functor from  
$\bT$-equivariant associative ring and commutative ring orthogonal spectra to
$\bT$-equivariant orthogonal spectra creates the weak equivalences and
fibrations for the $\aF$-model structures on the categories of $\bT$-equivariant
associative and commutative ring orthogonal spectra.

We will often take a $\bT$-equivariant orthogonal spectrum (indexed on
$U$) and look at its underlying non-equivariant spectrum indexed on 
$\bR^{\infty}$.  As indicated in the introduction, we will typically
denote an equivariant object with an underlined symbol (e.g., $\uR$)
and its underlying non-equivariant object with the non-underlined
symbol ($R$ in the case of $\uR$).  When we need to explicitly denote the forgetful
functor, we use the following notation.

\begin{notn}\label{notn:forget}
Let $i$ denote the forgetful functor from $\bT$-equivariant orthogonal spectra (indexed on
$U$) to non-equivariant spectra (indexed on $\bR^{\infty}$), and its
structured variants for categories of commutative ring orthogonal
spectra, associative ring orthogonal spectra, and modules over
commutative or associative ring orthogonal spectra.
\end{notn}

We write $\Phi$ for the endofunctor on $\bT$-equivariant orthogonal
spectra obtained as the composite of the (point-set multiplicative)
$C_{p}$ geometric fixed point functor $\Phi^{C_{p}}$
of~\cite[V.4.3]{MM} followed by the change of group functor along the $p$th root isomorphism $\rho \colon
\bT\iso \bT/C_{p}$:

\begin{notn}\label{notn:Phi}
Let $\Phi \uX:=\rho^{*}(\Phi^{C_{p}}\uX)$.
\end{notn}

The change of group functor $\rho^{*}$ implicitly involves a change
of universe functor, using the isomorphism $\rho^{*}U^{C_{p}}\iso U$
that comes from the standard isomorphisms $\rho^{*}\bC(pn)\iso \bC(n)$.
The functor $\Phi$ comes with a lax symmetric monoidal structure: 
a map
\[
\iota\colon \bS\to \Phi \bS
\]
and a natural transformation 
\[
\lambda \colon \Phi \uX\sma \Phi \uY\to \Phi (\uX\sma \uY)
\]
that is coherently associative, commutative, and unital (in the
point-set category of $\bT$-equivariant orthogonal spectra). The map
$\iota$ is an isomorphism and $\lambda$ is often an isomorphism
according to~\cite[V.4.7]{MM}. The
paper~\cite{BM-cycl23} expands the conditions under which $\lambda$ is
an isomorphism: it
defines in~III.1.5 a large 
class $\EQC$ of $\bT$-equivariant orthogonal
spectra $\uX$ with the property that $\lambda$ is an isomorphism for
any $\uY$. (They also have several additional properties including
that the map from the derived functor to the point-set functor
$\bL\Phi \uX\to \Phi \uX$ is an isomorphism and that $\uX$ is flat for
the smash product of $\bT$-equivariant orthogonal spectra.)
The key result of \cite[III.1.6]{BM-cycl23} is that the
class $\EQC$  
contains the cofibrant $\bT$-equivariant orthogonal spectra (and
therefore the cofibrant $\bT$-equivariant associative ring orthogonal
spectra), the cofibrant $\bT$-equivariant commutative ring orthogonal
spectra, and the cofibrant $\bT$-equivariant $\uA$-modules for any
$\bT$-equivariant associative ring orthogonal spectrum whose
underlying $\bT$-equivariant orthogonal spectrum is in $\EQC$.  (It is
expected that $\lambda$ may not be an isomorphism in general, but no
counterexample is currently known to the authors.)

We also have a non-equivariant version of the class $\EQC$ that
consists of the cofibrant objects in the ``convenient $\Sigma$-model
structure'' of~\cite[\S III.4]{BM-cycl23}
(essentially due to \cite[3.2]{Shipley-Convenient}); we denote this
class $\nEQC$.  When $A$ is an associative ring
orthogonal spectrum whose underlying orthogonal spectrum is in the
non-equivariant class $\nEQC$, then $THH(A)$ is in the equivariant
class $\EQC$.  This happens in particular when $A$ is a cofibrant
$R$-algebra for $R$ a commutative ring orthogonal spectrum that is
cofibrant in the standard model structure of~\cite[15.1]{MMSS} or the
model structure 
of~\cite[III.4.1]{BM-cycl23}.  Moreover, when the
underlying orthogonal spectrum of $A$ is in the class $\nEQC$, the
diagonal map $THH(A)\to \Phi THH(A)$ of~\cite[2.19]{ABGHLM} is an 
isomorphism (see \cite[VI.7.9]{BM-cycl23} and the proof
of Theorem~4.7 in~\cite[p.~2134]{ABGHLM}); this endows $THH(A)$ with a
cyclotomic structure as in~\cite[1.5]{ABGHLM}.

Using the lax symmetric monoidal structure, $\Phi$ refines to an
endofunctor on various categories of structured spectra, including
the category of $\bT$-equivariant associative ring orthogonal spectra and
the category of $\bT$-equivariant commutative ring orthogonal
spectra.  For a $\bT$-equivariant associative ring orthogonal spectrum
$A$, $\Phi$ refines to a functor from $\uA$-modules to $\Phi \uA$-modules.

The endofunctor $\Phi$ on $\bT$-equivariant orthogonal spectra has a
left derived functor, $\bL\Phi$, which can be computed by applying
$\Phi$ to a cofibrant approximation (in the standard or positive stable model
structures of~\cite[III.4.2,III.5.3]{MM}, the $\aF_{\fin}$- or
$\aF_{p}$-model structures of~\cite[IV.6.5]{MM}, or their variants
in~\cite[Thm.~C]{BM-cycl23}). This also works to
construct the derived functor
on the category of $\bT$-equivariant associative ring orthogonal
spectra.  For the category of $\bT$-equivariant commutative ring
orthogonal spectra, there is a standard $\aF_{p}$-model structure
constructed like the standard model structure of~\cite[III.8.1]{MM}
but building the cofibrations using only orbits from subgroups in
$\aF_{p}$, as in the non-multiplicative version in~\cite[IV.6.5]{MM}.
The functor $\Phi$ does not preserve weak
equivalences between objects cofibrant in the standard model structure
of~\cite[III.8.1]{MM} or the standard $\aF_{p}$-model
structure on $\bT$-equivariant
commutative ring orthogonal spectra, but (III.1.9)
of~\cite{BM-cycl23} displays a zigzag of Quillen equivalences induced
by the identity functor 
\[
\text{standard}\from
\text{model}(\aA^{\natural})\to
\text{model}(\aA)
\]
(left adjoints displayed)
between the standard $\aF_{p}$-model
structure on $\bT$-equivariant
commutative ring orthogonal spectra and two new model structures with
the property that $\Phi$ preserves weak equivalences between cofibrant
objects.  In the middle
is the $\aF_{p}$-local $\aV_{\pi!}$-restricted model
structure of Example~II.1.3 and Theorem~II.1.9
of~\cite{BM-cycl23} and on the right is the model structure of
Theorem~C of~\cite{BM-cycl23}.  The latter has the
further property that $\Phi$ preserves cofibrations and acyclic
cofibrations.  Moreover, Theorem~C
of~\cite{BM-cycl23} shows that the derived functor of $\Phi$ 
for $\bT$-equivariant commutative ring orthogonal spectra agrees with
the derived functor of $\Phi$ on the underlying $\bT$-equivariant
orthogonal spectra.

We write $N_{e}^{C_{p}}$ for the Hill-Hopkins-Ravenel
norm~\cite[\S 2.2.3]{HHR}, \cite[\S 2.2]{ABGHLM} from (non-equivariant)
orthogonal spectra to $C_{p}$-equivariant orthogonal spectra (indexed
on $U$ restricted to $C_{p}$). This functor is defined on all
orthogonal spectra but when restricted to commutative ring orthogonal
spectra it gives the free functor from the category of commutative
ring orthogonal spectra to the category of $C_{p}$-equivariant
commutative ring orthogonal spectra
\[
\aCom^{C_{p}}(N_{e}^{C_{p}}A,\uB)\iso \aCom(A,B)
\]
(for any commutative ring orthogonal spectrum $A$ and 
$C_{p}$-equivariant commutative ring orthogonal spectrum $\uB$).
As such, a $C_{p}$-equivariant
commutative ring orthogonal spectrum $\uR$ comes with a canonical map of
$C_{p}$-equivariant commutative ring orthogonal spectra 
\begin{equation}\label{eq:counit}
N_{e}^{C_{p}}R\to \uR
\end{equation}
given by the counit of the adjunction.

As explained in~\cite{ABGHLM}, $THH(-)$ as a functor from associative
ring orthogonal spectra to $\bT$-equivariant orthogonal spectra is the
norm $N_{e}^{\bT}(-)$.  On commutative ring orthogonal spectra, $THH$
is the free functor to $\bT$-equivariant commutative ring orthogonal
spectra. (This refines a 1997 theorem of
McClure-Schw\"anzl-Vogt~\cite{MSV}.) As a consequence, for $\uR$ a
$\bT$-equivariant 
commutative ring orthogonal spectrum and $A$ a 
commutative ring orthogonal spectrum, we have an adjunction
\[
\aCom^{\bT}(THH(A),\uR)\iso \aCom(A,R)
\]
(homeomorphism of mapping spaces) relating maps of commutative ring
orthogonal spectra to maps of $\bT$-equivariant commutative ring
orthogonal spectra.  The derived functor of $THH$ represents the
derived functor of the free functor, and we have the analogous
adjunction in the homotopy category.  We summarize the situation in
the following proposition.

\begin{prop}[{\cite[4.3]{ABGHLM}, \cite{MSV}}]\label{prop:tensor}
Restricted to the category of commutative ring orthogonal spectra,
$THH$ is the free functor from commutative ring orthogonal spectra to
$\bT$-equivariant commutative ring orthogonal spectra,
\[
\aCom^{\bT}(THH(-),-)\iso \aCom(-,i(-)).
\]
\end{prop}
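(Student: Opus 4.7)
The plan is to model $THH(A)$ explicitly as the geometric realization of the cyclic bar construction $N^{cy}_\bullet(A)$, where $N^{cy}_n(A)=A^{\sma(n+1)}$, and to identify the free functor from commutative ring orthogonal spectra to $\bT$-equivariant commutative ring orthogonal spectra with the tensor $A\otimes \bT$ (the smash power of $A$ indexed by the circle, computed in the category of commutative ring orthogonal spectra). The desired homeomorphism then follows from a strict identification $THH(A)\iso A\otimes \bT$ as $\bT$-equivariant commutative ring orthogonal spectra, together with the universal property of the tensor.

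First I would recall that for a commutative ring orthogonal spectrum $A$, the commutative multiplication on $A$ lifts the usual simplicial structure on $N^{cy}_\bullet(A)$ to a $\Lambda^{op}$-structure (a cyclic object in commutative ring orthogonal spectra), so that the geometric realization $|N^{cy}_\bullet(A)|$ acquires a natural $\bT$-action by commutative ring orthogonal spectrum maps. This is the point-set model of $THH(A)$ used throughout; on the commutative side it agrees with the norm $N_e^{\bT}$ construction used in~\cite{ABGHLM}.

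Next I would identify the free functor. In the symmetric monoidal category of commutative ring orthogonal spectra, the tensor $A\otimes K$ with a simplicial set $K$ is given by the coend/colimit model $|A^{\sma K_\bullet}|$ where for each $n$, $A^{\sma K_n}$ denotes the $K_n$-indexed smash power of $A$ in commutative ring orthogonal spectra. Using the standard simplicial model $S^1_\bullet = \Delta^1/\partial\Delta^1$ of the circle, one checks levelwise that $A^{\sma (S^1_n)}\iso A^{\sma(n+1)} = N^{cy}_n(A)$, and the simplicial face and degeneracy maps on both sides agree because both are induced by the commutative ring orthogonal spectrum structure of $A$. Thus there is a canonical isomorphism
\[
THH(A) \;\iso\; A\otimes \bT
\]
of $\bT$-equivariant commutative ring orthogonal spectra. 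The universal property of the tensor then gives, for any $\bT$-equivariant commutative ring orthogonal spectrum $\uR$, a homeomorphism
\[
\aCom^{\bT}(A\otimes \bT,\uR) \iso \aCom^{\bT}(A\otimes *, i(\uR)) \iso \aCom(A,i(\uR)),
\]
where the first isomorphism comes from $\bT\to *$ being the terminal map and the second uses that $A\otimes *\iso A$.

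The main obstacle is to carry out the cyclic identification $A^{\sma (S^1_\bullet)} \iso N^{cy}_\bullet(A)$ strictly at the point-set level in orthogonal spectra (rather than merely up to weak equivalence), and to verify that the $\bT$-action coming from the cyclic structure agrees with the $\bT$-action coming from the naturality of the tensor $A\otimes(-)$ in the space argument. This requires using a fixed combinatorial model for $S^1$ and matching degeneracies and cyclic permutations on both sides, which is exactly the content of~\cite[4.3]{ABGHLM} and the McClure--Schw\"anzl--Vogt theorem~\cite{MSV}.
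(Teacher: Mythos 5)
Your overall strategy---identify $THH(A)$ with the tensor $A\otimes\bT$ in commutative ring orthogonal spectra, then invoke the universal property of the tensor---is exactly the content of the cited references [ABGHLM~4.3] and [MSV]; the paper itself supplies no proof, only the citation, so you and the paper are in agreement on where the real work lives (the strict point-set identification of the cyclic bar construction with the $\bT$-tensor).

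Two places to tighten up. First, the middle step of your chain of isomorphisms does not parse: $\aCom^{\bT}(A\otimes\bT,\uR)\iso\aCom^{\bT}(A\otimes *, i(\uR))$ is not a meaningful comparison, since $i(\uR)$ is a non-equivariant object and cannot be the target of a $\bT$-equivariant mapping space, and the terminal map $\bT\to *$ induces a map of mapping spaces in the wrong direction anyway. The correct route is the enriched universal property of the tensor: $\aCom^{\bT}(A\otimes\bT,\uR)\iso\aTop^{\bT}(\bT,\aCom(A,i\uR))$, where the inner mapping space carries the $\bT$-action coming from $\uR$; then use that $\bT$ is the free $\bT$-space on a point, so evaluation at $1\in\bT$ gives $\aTop^{\bT}(\bT,Y)\iso Y$ for any $\bT$-space $Y$. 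Second, you elide the change of universe: the tensor $A\otimes\bT$ you describe lives over the trivial universe $\bR^{\infty}$, but the proposition concerns $\bT$-equivariant commutative ring orthogonal spectra indexed on the complete universe $U$. The free functor is the composite of $(-)\otimes\bT$ with $I_{\bR^{\infty}}^{U}$; the paper records exactly this in the definition of $THH^{\epsilon R}$ and in the proof of Proposition~\ref{prop:PsiTHH}. With those two points repaired, your outline is sound and matches the standard argument.
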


\section{Relative $THH$}

The idea of $THH$ relative to a commutative ring orthogonal spectrum
$R$ is to use the smash 
product of $R$-modules $\sma_{R}$ in place of the smash product of
spectra in the cyclic bar construction.  In this section, we review
and clarify the $\bT$-equivariant structures on relative $THH$,
particularly in the case when $R$ has the extra structure of being
the underlying non-equivariant commutative ring orthogonal spectrum of a
$\bT$-equivariant commutative ring orthogonal spectrum $\uR$.

We start with $R$ a commutative ring orthogonal spectrum and $A$ an
associative $R$-algebra.  We then have a cyclic bar construction
$N^{\cy}_{R}$ constructed using the smash product over $R$,
$\sma_{R}$ of~\cite[22.2]{MMSS}:
\[
N^{\cy}_{R}(A)=| q \mapsto
\underbrace{A\sma_{R}\dotsb\sma_{R}A}_{q+1\text{\ factors}}|.
\]
This is the geometric realization of a cyclic spectrum, and that
structure endows it with
a natural $\bT$-action.  We do point-set change of universe
$I_{\bR^{\infty}}^{U}$ to get a $\bT$-equivariant orthogonal spectrum
indexed on $U$.  

\begin{defn}
For a commutative ring orthogonal spectrum $R$, define the point-set
functor $THH^{\epsilon R}$ from the category of associative $R$-algebras to the
category of $\bT$-equivariant orthogonal spectra as the composite of
the cyclic bar construction with respect to $\sma_{R}$ and change of universe:
\[
THH^{\epsilon R}(A):=I_{\bR^{\infty}}^{U}N^{\cy}_{R}(A).
\]
We understand $THH$ with no superscript as $THH^{\epsilon \bS}$; this
is the $THH$ functor from associative ring orthogonal spectra to
$\bT$-equivariant orthogonal spectra of~\cite{ABGHLM}.
\end{defn}

(The reason for the $\epsilon$ in the notation will become clear in
Proposition~\ref{prop:relissma} and Definition~\ref{defn:eqrelthh} below.)

The previous definition does not capture all the structure:
$N^{\cy}_{R}(A)$ has the structure of an $R$-module, but we have
defined $THH^{\epsilon R}(A)$ only as an equivariant spectrum.  To describe the
equivariant module structure, we use the following notation.  It
expresses a particular point-set construction of a $\bT$-equivariant spectrum
from a non-equivariant spectrum, viewed as an equivariant spectrum on
the naive universe $\bR^{\infty}$ by giving it the trivial action.

\begin{notn}\label{notn:epsilon}
For a non-equivariant orthogonal spectrum $X$, let $\epsilon
X=I_{\bR^{\infty}}^{U}X$ denote the $\bT$-equivariant orthogonal
spectrum (indexed on $U$) obtained from change of universe from $X$ viewed
as a $\bT$-equivariant orthogonal spectrum indexed on $\bR^{\infty}$
with the trivial $\bT$-action.
\end{notn}

We emphasize that in the previous notation, $\epsilon$ denotes a
point-set functor.  It preserves associative ring and commutative ring
structures. It also preserves weak equivalences between cofibrant
orthogonal spectra and between cofibrant associative ring orthogonal
spectra (even for maps of orthogonal spectra that are not 
ring maps).  It preserves ring map weak equivalences between cofibrant
commutative ring orthogonal spectra (in the standard model structures or
the model structure of~\cite[III.4.1]{BM-cycl23});
however, we repeat the warning of~\cite[6.1]{ABGHLM}. 

\begin{warning}\label{warn:epsilon}
The functor $\epsilon$ preserves weak equivalences between cofibrant
objects in the category of commutative ring orthogonal spectra, and so
admits a left derived functor $\epsilon^{\bL}$ from the homotopy category of
commutative ring orthogonal spectra to the homotopy category of
$\bT$-equivariant commutative ring orthogonal spectra.  However, the composite
with the derived forgetful functor to the equivariant stable category is not the
left derived functor of $\epsilon$ from orthogonal spectra to
$\bT$-equivariant orthogonal spectra.
\[
\xymatrix@-1pc{%
\Ho\aCom \ar[r]^{\epsilon^{\bL}}\ar[d]\ar@{}[dr]|{\color{red}{\displaystyle\bm\times}}
&\Ho\aCom^{\bT}\ar[d]\\
\Ho\Spectra\ar[r]_{\epsilon^{\bL}}&\Ho\Spectra^{\bT}
}
\]
Specifically, let $R$ be a
cofibrant commutative ring orthogonal spectrum and let $X\to R$ be a
cofibrant approximation in the category of orthogonal spectra.  Then
the underlying non-equivariant spectrum of the derived geometric fixed
point functor
$\bL\Phi^{C_{p}} \epsilon X$ is equivalent to $X$, whereas
by Theorem~A of~\cite{BM-cycl23}, the underlying non-equivariant 
spectrum of $\bL\Phi^{C_{p}} \epsilon R$ is equivalent to $R\otimes(*\amalg
BC_{p})$ (tensor in the category of commutative ring orthogonal spectra).
Because of this possible point of confusion, we \emph{never} again
explicitly refer to a derived functor for $\epsilon$ in this paper;
however, since we often ask for $R$ to be a cofibrant commutative ring
orthogonal spectrum, we are implicitly using the commutative ring
derived functor. 
\end{warning}

We lift $THH^{\epsilon R}$ as a functor to $\bT$-equivariant $\epsilon
R$-modules using the following result of~\cite[1.8]{ABGHLM}.
In it, the $THH(R)$-module structure on $\epsilon R$ comes from the
map of $\bT$-equivariant commutative ring orthogonal spectra
$THH(R)\to \epsilon R$  adjoint under Proposition~\ref{prop:tensor} to the
canonical isomorphism $R\iso i\epsilon R$.

\begin{prop}[{\cite[1.8]{ABGHLM}}]\label{prop:relissma}
Let $R$ be a commutative ring orthogonal spectrum and $A$ an
associative $R$-algebra.
There is a natural isomorphism of $\bT$-equivariant orthogonal spectra 
\[
THH^{\epsilon R}(A)\iso THH(A)\sma_{THH(R)}\epsilon R.
\]
Moreover, if $R$ is cofibrant as a commutative ring orthogonal
spectrum (in the standard model structure or the
model structure of~\cite[III.4.1]{BM-cycl23}), and $A$
is a cofibrant associative $R$-algebra or a cofibrant 
commutative $R$-algebra, then $THH^{\epsilon R}(A)$ represents the derived
smash product of $THH(R)$-modules
\[
THH^{\epsilon R}(A)\simeq THH(A)\sma^{\bL}_{THH(R)}\epsilon R.
\]
\end{prop}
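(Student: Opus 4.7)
The plan is to reduce the claim to a natural isomorphism of cyclic objects at the pre-realization level, then transport the result through geometric realization and point-set change of universe. At simplicial level $q$, the $(q+1)$-fold relative smash product can be computed as a coequalizer
\[
A^{\sma_{R}(q+1)} \iso A^{\sma(q+1)}\sma_{R^{\sma(q+1)}}R
\]
where $R^{\sma(q+1)}$ acts on $R$ through the iterated multiplication $R^{\sma(q+1)}\to R$ (available because $R$ is commutative). This identification is essentially formal from the universal properties of $\sma_R$ and of the smash product of commutative ring spectra. I would then verify that this levelwise isomorphism is compatible with the cyclic face, degeneracy, and cyclic operator maps: on both sides the face maps use the multiplication on $A$ and on $R$, the degeneracies use the units, and the cyclic operator is the cyclic permutation.

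Granted compatibility with the cyclic structure, passing to geometric realization and using that realization commutes with smash products yields a natural $\bT$-equivariant isomorphism
\[
N^{\cy}_{R}(A)\iso N^{\cy}(A)\sma_{N^{\cy}(R)}R
\]
of (naive) $\bT$-equivariant orthogonal spectra indexed on $\bR^{\infty}$, where $N^{\cy}(R)$ acts on $R$ through the counit $N^{\cy}(R)\to R$ of the free commutative ring adjunction (which realizes, after change of universe, to the counit $THH(R)\to \epsilon R$ of Proposition~\ref{prop:tensor}). Applying $I^{U}_{\bR^{\infty}}$, which is strong symmetric monoidal, and using the definition $\epsilon R = I^{U}_{\bR^{\infty}}R$, produces the claimed point-set isomorphism
\[
THH^{\epsilon R}(A)\iso THH(A)\sma_{THH(R)}\epsilon R.
\]

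For the derived statement, I would invoke the hypothesis that $R$ is cofibrant as a commutative ring orthogonal spectrum and $A$ is a cofibrant associative (or commutative) $R$-algebra. Under these hypotheses the derived functor of $THH$ (on the appropriate structured category) is computed by the point-set cyclic bar construction, so $THH(A)$ and $THH(R)$ have the correct homotopy type, and the map $THH(R)\to \epsilon R$ is a map of cofibrant $\bT$-equivariant commutative ring orthogonal spectra making $\epsilon R$ into a cofibrant (hence flat) $THH(R)$-algebra in the sense needed to model the derived smash product. Combined with the analogous flatness/cofibrancy of $THH(A)$ as a $THH(R)$-module, this implies that the point-set $\sma_{THH(R)}$ agrees with $\sma^{\bL}_{THH(R)}$.

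The main obstacle will be the careful bookkeeping in step one: making the levelwise coequalizer identification genuinely cyclic (not merely simplicial), so that the $\bT$-action recorded by the cyclic structure on the left-hand side is intertwined with the diagonal $\bT$-action on the right-hand realization of $N^{\cy}(A)\sma_{N^{\cy}(R)}R$. The derived comparison is then a relatively standard model-theoretic verification given the model structure and flatness results imported from~\cite{BM-cycl23} and~\cite{ABGHLM}.
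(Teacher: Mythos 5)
The paper itself does not present a proof of this proposition; it cites \cite[1.8]{ABGHLM}, so there is no internal argument to compare against directly. That said, your approach — identify $A^{\sma_R(q+1)}\iso A^{\sma(q+1)}\sma_{R^{\sma(q+1)}}R$ levelwise, check cyclic compatibility, pass through realization and $I^U_{\bR^\infty}$ — is the standard route and, I believe, the one taken in that reference. The one step you flag but do not carry out, compatibility with the cyclic operator, is exactly where commutativity of $R$ enters: the iterated multiplication $R^{\sma(q+1)}\to R$ is invariant under the cyclic permutation of its factors precisely because $R$ is commutative, which is what makes $\epsilon R$ (as a constant cyclic object) a module over the cyclic ring object $R^{\sma(\bullet+1)}$; once observed, the check is routine. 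The realization step then uses that $|-|$ commutes with both smash products (via the diagonal of the bicyclic object) and coequalizers, and that $I^U_{\bR^\infty}$ is strong symmetric monoidal.

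Your explanation of the derived statement does contain an error, though. You assert that the counit $THH(R)\to\epsilon R$ exhibits $\epsilon R$ as a cofibrant, hence flat, $THH(R)$-algebra; this is not the case — the counit of the free/forgetful adjunction is not a cofibration in general. The reason the point-set smash computes the derived one is instead that $THH(A)$ is the flat factor over $THH(R)$. In the commutative case this is because $THH$, being the left adjoint of Proposition~\ref{prop:tensor}, carries the cofibration $R\to A$ to a cofibration $THH(R)\to THH(A)$ of $\bT$-equivariant commutative ring spectra, so $THH(A)$ is a cofibrant $THH(R)$-algebra and hence flat as a $THH(R)$-module. In the associative case one argues levelwise: cofibrancy of $A$ as an associative $R$-algebra makes $A$ flat over $R$, whence $A^{\sma(q+1)}$ is flat over $R^{\sma(q+1)}$, and this passes through the (proper) geometric realization. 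You do mention the flatness of $THH(A)$ as a $THH(R)$-module in passing, so the substance is present, but the emphasis on $\epsilon R$ should be removed.
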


The left derived functor $(-)\sma^{\bL}_{THH(R)}\epsilon R$ from
$\bT$-equivariant $THH(R)$-modules to $\bT$-equivariant orthogonal
spectra factors as the composite of the derived forgetful functor and
the derived functor $(-)\sma^{\bL}_{THH(R)}\epsilon R$ from
$\bT$-equivariant $THH(R)$-modules to $\bT$-equivariant $\epsilon
R$-modules.
\[
\xymatrix@R+1pc{%
\Ho\aMod^{\bT}_{THH(R)}\ar[rr]^{(-)\sma^{\bL}_{THH(R)}\epsilon R} \ar@/_2pc/[rrr]_{(-)\sma^{\bL}_{THH(R)}\epsilon R}
& \, &\Ho\aMod^{\bT}_{\epsilon R} \ar[r] & \Ho\Spectra^{\bT}
}
\]

The previous result also implies the following homotopical property of
$THH^{\epsilon R}$.

\begin{prop}
Let $R'\to R$ be a weak equivalence of cofibrant commutative ring
orthogonal spectra, let $A'$ be a cofibrant associative $R'$-algebra
and let $A$ be either a cofibrant associative $R$-algebra or a
cofibrant commutative $R$-algebra.  A weak equivalence of
$R'$-algebras $A'\to A$ then induces an $\aF_{\fin}$-equivalence of
$\bT$-equivariant $\epsilon R'$-modules $THH^{\epsilon R'}(A')\to
THH^{\epsilon R}(A)$.
\end{prop}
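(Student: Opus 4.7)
The approach is to reduce to the homotopy invariance of the derived smash product by applying Proposition~\ref{prop:relissma}. Under the cofibrancy hypotheses, that proposition yields
\[
THH^{\epsilon R'}(A') \simeq THH(A') \sma^{\bL}_{THH(R')} \epsilon R'
\quad\text{and}\quad
THH^{\epsilon R}(A) \simeq THH(A) \sma^{\bL}_{THH(R)} \epsilon R,
\]
and the natural map in the conclusion is identified with the map of derived smash products induced by the triple $THH(A') \to THH(A)$, $THH(R') \to THH(R)$, and $\epsilon R' \to \epsilon R$ (the latter two induced by $R' \to R$).

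First I would verify that each of these three maps is an $\aF_{\fin}$-equivalence. The map $\epsilon R' \to \epsilon R$ is an $\aF_{\fin}$-equivalence because $\epsilon$ preserves weak equivalences between cofibrant commutative ring orthogonal spectra (in the model structure of~\cite{BM-cycl23}), as recalled in the discussion preceding Warning~\ref{warn:epsilon}. The map $THH(R') \to THH(R)$ is an $\aF_{\fin}$-equivalence because, by Proposition~\ref{prop:tensor}, $THH$ on commutative ring orthogonal spectra is the left adjoint of the forgetful functor $i$, and is left Quillen in the appropriate model structures, so it preserves weak equivalences between cofibrant objects. For $THH(A') \to THH(A)$: the underlying orthogonal spectra of $A'$ and $A$ lie in the class $\EQC$ (since they are cofibrant associative or commutative algebras over cofibrant commutative ring orthogonal spectra), the map $A' \to A$ of underlying orthogonal spectra is a weak equivalence, and $THH = N_{e}^{\bT}$ on associative ring spectra preserves weak equivalences between such, yielding an $\aF_{\fin}$-equivalence.

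Finally I would combine these via the standard invariance of the derived smash product: a weak equivalence of cofibrant $\bT$-equivariant commutative ring spectra $THH(R') \to THH(R)$, together with compatible $\aF_{\fin}$-equivalences of modules on either side, induces an $\aF_{\fin}$-equivalence on derived smash products. The main anticipated obstacle is that $A$ is not assumed cofibrant as an $R'$-algebra (only via restriction from an $R$-algebra structure), so $THH(A)$ need not be cofibrant as a $THH(R')$-module. This is handled by replacing $THH(R')\to THH(R)$ by a cofibration of $\bT$-equivariant commutative ring spectra and invoking that derived base change along a weak equivalence of cofibrant $\bT$-equivariant commutative rings is an equivalence of homotopy categories of modules; on the underlying level this gives an $\aF_{\fin}$-equivalence, which is precisely a weak equivalence of $\bT$-equivariant $\epsilon R'$-modules since the forgetful functor to $\bT$-equivariant orthogonal spectra creates them.
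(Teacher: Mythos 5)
Your proposal is essentially correct and takes the same route the paper intends: the paper gives no explicit proof, but the preceding sentence ``The previous result also implies the following homotopical property'' signals exactly your strategy — invoke Proposition~\ref{prop:relissma} to identify both sides as (models for) derived smash products, then use homotopy invariance. Your verification of the three constituent $\aF_{\fin}$-equivalences ($\epsilon R'\to\epsilon R$, $THH(R')\to THH(R)$, $THH(A')\to THH(A)$) is accurate, and your resolution of the cofibrancy issue is sound; a marginally slicker phrasing is to compute $THH(A)\sma^{\bL}_{THH(R')}\epsilon R'$ using the cofibrant model $THH(A')$ (available since $THH(A')\to THH(A)$ is a weak equivalence of $THH(R')$-modules with cofibrant source), replace $\epsilon R'$ by $\epsilon R$, and then use that $M\sma^{\bL}_{THH(R')}N\simeq M\sma^{\bL}_{THH(R)}N$ when $M,N$ are $THH(R)$-modules and $THH(R')\to THH(R)$ is a weak equivalence of cofibrant commutative ring spectra; but this is the same idea as your cofibration-replacement argument.
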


Now let $\uR$ be a $\bT$-equivariant commutative ring orthogonal
spectrum and let $R=i\uR$. We then get a map of $\bT$-equivariant
commutative ring orthogonal spectra $THH(R)\to \uR$ adjoint
under Proposition~\ref{prop:tensor} to the identity map $R\to i\uR$.  This endows
relative $THH$ with a different equivariant structure that
incorporates the equivariance on $\uR$.

\begin{defn}\label{defn:eqrelthh}
Let $\uR$ be a $\bT$-equivariant commutative ring orthogonal spectrum
and let $R=i\uR$ be the underlying non-equivariant commutative ring orthogonal spectrum.
Let 
\[
THH(R)\to \uR
\]
be the map of $\bT$-equivariant commutative ring orthogonal spectra
adjoint under Proposition~\ref{prop:tensor} to the identity map $R\to i\uR$. 
Define the functor $THH^{\uR}$ from the category of (non-equivariant)
$R$-algebras to the category of $\bT$-equivariant $\uR$-modules by
\[
THH^{\uR}(A):=THH(A)\sma_{THH(R)}\uR.
\]
\end{defn}

The same argument as Proposition~\ref{prop:relissma}
proves the following results.

\begin{prop}\label{prop:relisdersma}
Let $\uR$ be a cofibrant $\bT$-equivariant commutative ring orthogonal
spectrum (in the standard model structure or the model structure of
\cite[Thm.~C]{BM-cycl23}) and let $R=i\uR$ be the underlying
non-equivariant commutative ring orthogonal spectrum.  Let $A$ be a
cofibrant associative $R$-algebra or a cofibrant commutative
$R$-algebra (in the standard model structures or the model structures of
\cite[III.4.1]{BM-cycl23}).  Then $THH^{\uR}(A)$ represents the derived
smash product of $THH(R)$-modules
\[
THH^{\uR}(A)\simeq THH(A)\sma^{\bL}_{THH(R)}\uR.
\]
\end{prop}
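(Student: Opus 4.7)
The plan is to mimic the proof of Proposition~\ref{prop:relissma} essentially verbatim. By Definition~\ref{defn:eqrelthh}, $THH^{\uR}(A)$ is the point-set smash product $THH(A)\sma_{THH(R)}\uR$ in $\bT$-equivariant $\uR$-modules, so what must be verified is that this point-set construction coincides with the derived smash product $THH(A)\sma^{\bL}_{THH(R)}\uR$ under the stated hypotheses. The observation to exploit is that Proposition~\ref{prop:relissma} splits naturally into two independent ingredients: (i) a cofibrancy claim for $THH(A)$ as a $THH(R)$-module, and (ii) the formal fact that smashing a cofibrant module over $THH(R)$ with any other $THH(R)$-module computes the derived smash product. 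Only (i) uses any information about the argument $A$ and the base $R$; ingredient (ii) is insensitive to whether the other factor is $\epsilon R$ or $\uR$.

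Concretely, I would first recall from the proof of Proposition~\ref{prop:relissma} that when $A$ is a cofibrant associative $R$-algebra (with $R$ cofibrant commutative), the cyclic bar construction $THH(A)=N^{\cy}_{\bS}(A)$, viewed as a $THH(R)$-module via the map of Proposition~\ref{prop:tensor} induced by the unit $R\to A$, is a cofibrant $THH(R)$-module in the $\aF_{\fin}$-model structure from~\cite{BM-cycl23}. In the commutative $R$-algebra case, one uses instead the identification $THH(A)\iso A\otimes \bT$ and $THH(R)\iso R\otimes \bT$ (tensors with $\bT$ in commutative ring orthogonal spectra) together with the pushout-product property of the commutative ring model structures of~\cite{BM-cycl23}. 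In either case the cofibrancy is precisely the one established en route to Proposition~\ref{prop:relissma}; it is a statement about $THH(A)$ as a module over $THH(R)$ and requires no hypothesis about the second smash factor.

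Having established (i), I would invoke (ii): the point-set smash product
\[
THH(A)\sma_{THH(R)}\uR
\]
represents the left-derived smash product $THH(A)\sma^{\bL}_{THH(R)}\uR$, since one factor is cofibrant over $THH(R)$. This gives the desired equivalence of $\bT$-equivariant $\uR$-modules (and in particular of $\bT$-equivariant orthogonal spectra), completing the proof.

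The main obstacle, exactly as in Proposition~\ref{prop:relissma}, is the cofibrancy of $THH(A)$ as a $THH(R)$-module in the $\aF_{\fin}$-model structure on equivariant $THH(R)$-modules. This is genuinely a technical point requiring the model-structural apparatus of~\cite{BM-cycl23}, in particular the interaction of the cyclic bar construction with cofibrations of associative and commutative ring orthogonal spectra. However, the key observation is that this obstacle has already been overcome in the proof of Proposition~\ref{prop:relissma}, and nothing in its resolution depended on the specific $THH(R)$-module $\epsilon R$; replacing $\epsilon R$ by $\uR$ changes only the ``other'' factor, to which the derived-versus-point-set comparison is insensitive.
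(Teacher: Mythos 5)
Your proposal is correct and matches the paper's intent: the paper's proof is simply ``the same argument as Proposition~\ref{prop:relissma},'' and your decomposition into (i) cofibrancy/niceness of $THH(A)$ as a $THH(R)$-module and (ii) the formal insensitivity of derived-vs-point-set comparison to the other smash factor is exactly the right way to see why the argument transfers with $\epsilon R$ replaced by $\uR$. One small point worth making explicit: the hypotheses of Proposition~\ref{prop:relissma} assume $R$ is cofibrant as a non-equivariant commutative ring orthogonal spectrum, whereas Proposition~\ref{prop:relisdersma} assumes $\uR$ is cofibrant equivariantly and sets $R=i\uR$; to run the argument verbatim one needs the fact (established in~\cite{BM-cycl23} and used elsewhere in the paper, e.g.\ in the lead-in to Proposition~\ref{prop:dmapTHH}) that the forgetful functor $i$ preserves cofibrancy on commutative ring orthogonal spectra.
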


\begin{prop}\label{prop:derrelthh}
Let $\uR'\to \uR$ be an $\aF$-equivalence of cofibrant
$\bT$-equivariant commutative ring orthogonal spectra (in either the
standard model structure or the model structure of~\cite[Thm.~C]{BM-cycl23})
for any family $\aF$ of proper subgroups of $\bT$. Let $A'$ be a cofibrant
associative $R'$-algebra and let $A$ be either a cofibrant associative
$R$-algebra or a cofibrant commutative $R$-algebra.  A weak
equivalence of $R'$-algebras $A'\to A$ then induces an
$\aF$-equivalence of $\bT$-equivariant $\uR'$-modules
$THH^{\uR'}(A')\to THH^{\uR}(A)$.
\end{prop}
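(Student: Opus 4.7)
The plan is to use Proposition~\ref{prop:relisdersma} to rewrite both sides as derived smash products and then factor the comparison map through intermediate objects that vary one parameter at a time. Concretely, I would consider the zig-zag
$$THH(A')\sma^{\bL}_{THH(R')}\uR'\to THH(A)\sma^{\bL}_{THH(R')}\uR'\to THH(A)\sma^{\bL}_{THH(R')}\uR\to THH(A)\sma^{\bL}_{THH(R)}\uR,$$
where the first arrow is induced by $A'\to A$, the second by $\uR'\to \uR$, and the third is base change along $THH(R')\to THH(R)$. Here $A$ is regarded as an $R'$-algebra by restriction along $R'\to R$.

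For the first arrow, the essential point is that $THH(A')\to THH(A)$ is an $\aF_{\fin}$-equivalence: since the trivial subgroup lies in $\aF$, the map $R'\to R$ is a non-equivariant weak equivalence, so $A'\to A$ is a non-equivariant weak equivalence between cofibrant associative ring orthogonal spectra, and realizing $THH$ as the norm $N^{\bT}_{e}$ together with the standard Hill-Hopkins-Ravenel analysis of iterated fixed points of the cyclic bar construction yields an $\aF_{\fin}$-equivalence. Derived smash product over $THH(R')$ with the cofibrant $\bT$-equivariant $THH(R')$-module $\uR'$ then preserves this equivalence. The second arrow smashes $THH(A)$ with the $\aF$-equivalence $\uR'\to \uR$ over $THH(R')$, and is an $\aF$-equivalence because derived smash product with a cofibrant module preserves $\aF$-equivalences in the $\aF$-model structure on $\bT$-equivariant $THH(R')$-modules of~\cite{BM-cycl23}. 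The third arrow is base change along $THH(R')\to THH(R)$, which is an $\aF_{\fin}$-equivalence by the first-step argument applied to the identity map of $R$; derived base change along such an equivalence is invertible on homotopy categories of $\bT$-equivariant modules.

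The main obstacle is establishing the preservation of $\aF$-equivalences under derived smash product over a $\bT$-equivariant commutative ring orthogonal spectrum for families $\aF$ of proper subgroups of $\bT$. This is where the properness hypothesis is essential: the geometric fixed point and diagonal compatibilities of smash products (which control fixed points of derived smash products at finite cyclic subgroups) break down for the full circle $\bT$, so the same argument would fail if $\bT\in\aF$. A secondary technical subtlety is that $A$ is cofibrant as an $R$-algebra but not automatically as an $R'$-algebra; this is resolved by invoking the Quillen equivalence between $R'$-algebras and $R$-algebras induced by the weak equivalence $R'\to R$ of cofibrant commutative ring orthogonal spectra, allowing us to replace $A$ by a weakly equivalent cofibrant $R'$-algebra in order to ensure the point-set smash products represent their derived counterparts.
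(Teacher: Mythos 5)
The paper does not supply an explicit proof of this proposition; it only remarks that ``the same argument as Proposition~\ref{prop:relissma} proves the following results,'' i.e.\ the argument is by reduction to the derived smash product identification of Proposition~\ref{prop:relisdersma}. Your zig-zag
\[
THH(A')\sma^{\bL}_{THH(R')}\uR'\to THH(A)\sma^{\bL}_{THH(R')}\uR'\to THH(A)\sma^{\bL}_{THH(R')}\uR\to THH(A)\sma^{\bL}_{THH(R)}\uR
\]
is the natural way to spell this out, and the justification of each arrow is correct: the key ingredients are that $THH$ sends weak equivalences of spectra in the class $\EQC$ to $\aF_{\fin}$-equivalences, that derived smash over a $\bT$-equivariant commutative ring spectrum preserves $\aF$-equivalences for $\aF$ a family of \emph{proper} (hence finite) subgroups, and that base change along the $\aF_{\fin}$-equivalence $THH(R')\to THH(R)$ is a Quillen equivalence on $\aF$-local module categories. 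Your identification of where properness is used is also accurate.

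One small clarification on your ``secondary technical subtlety'': since the intermediate terms of the zig-zag are written as \emph{derived} smash products, you do not actually need $A$ to be cofibrant as an $R'$-algebra for those terms to be well-defined or for the comparison maps to exist. What is genuinely needed is that $THH(A)$ has the correct $\aF_{\fin}$-equivariant homotopy type, which already follows because $A$ is a cofibrant $R$-algebra over a cofibrant commutative ring $R$, so its underlying orthogonal spectrum lies in the class $\EQC$. The Quillen equivalence of $R'$- and $R$-algebras you invoke is the right tool if one wants the intermediate point-set smash products to model their derived versions, but the argument goes through with derived smash products throughout.
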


When we discuss the derived functor of relative $THH^{\uR}(A)$, we mean the
derived functor of both variables $\uR$ and $A$, which exists by the
previous proposition.  The following definition makes this precise.

\begin{defn}\noindent
\begin{enumerate}
\item 
Let $\aPair_{\aAlg}$ be the category where an object consists of a pair $(\uR,A)$
with $\uR$ a $\bT$-equivariant commutative ring orthogonal spectrum
and $A$ an associative $R$-algebra, and a map $(\uR',A')\to
(\uR,A)$ consists of a map of $\bT$-equivariant commutative ring
orthogonal spectra $\uR'\to \uR$ and a map of associative
$R'$-algebras $A'\to A$ (relative to the given map $\uR'\to \uR$).
\item
Let $\aPair_{\aMod}$ be the category of pairs $(\uR,M)$ with $\uR$ a
$\bT$-equivariant commutative ring orthogonal spectrum and $M$ a
$\bT$-equivariant $\uR$-module with the evident maps (analogous to the
definition of $\aPair_{\aAlg}$). 
\item
For $\aF$ a family of subgroups
of $\bT$, we let the $\aF$-equivalences in $\aPair_{\aAlg}$ be the maps
$(\uR',A')\to (\uR,A)$ where $\uR'\to \uR$ is an $\aF$-equivalence and
$A'\to A$ is a weak equivalence; we let the $\aF$-equivalences in
$\aPair_{\aMod}$ be the maps $(\uR',M')\to (\uR,M)$ where $\uR'\to \uR$
and $M'\to M$ are $\aF$-equivalences.
\end{enumerate}
\end{defn}

\begin{ter}\label{ter:derthh}
By the \term{derived functor of relative $THH$}, we mean the total
left derived functor of $(\uR,A)\mapsto THH^{\uR}(A)$ as a functor
from $\aPair_{\aAlg}$ to $\aPair_{\aMod}$ with the
$\aF_{p}$-equivalences, and we denote it as $\LTHH$. 
\end{ter}

Since $\LTHH$ is defined as a derived functor in a category of pairs
$(\uR,A)$, it is worth commenting on it as a functor of $A$ when we
keep $\uR$ fixed.  The functor $A\mapsto (\uR,A)$ from associative
$R$-algebras to $\aPair_{\aAlg}$ preserves weak equivalences and so
induces a functor on homotopy categories; this gives a restriction of
$\LTHH^{\uR}(-)$ to a functor from the homotopy category of associative
$R$-algebras to the homotopy category of $\aPair_{\aMod}$.  
Since a weak equivalence of $\bT$-equivariant
commutative ring orthogonal spectra $\uR'\to \uR$ induces an
equivalence of $\aF_{p}$-homotopy categories of modules, we can refine
$\LTHH^{\uR}(-)$ to a functor from the homotopy category of associative
$R$-algebras to the homotopy category of $\bT$-equivariant
$\uR$-modules (which we still write as $\LTHH^{\uR}(-)$ by abuse of
notation).  When $\uR$ is cofibrant
in either the standard model structure on $\bT$-equivariant
commutative ring orthogonal spectra or the model structure
of~\cite[Thm.~C]{BM-cycl23}, this restriction
$\LTHH^{\uR}(-)$ agrees with the total left derived functor of
$THH^{\uR}(-)$ from associative $R$-algebras to $\bT$-equivariant
$\uR$-modules (with the $\aF_{p}$-equivalences).  As a consequence, as
long as we assume $\uR$ is cofibrant, we do not have to worry about
the difference between deriving relative $THH$ in both the base and
algebra variable or in just the algebra variable.  To preclude any
confusion, we will usually take $\uR$ to be cofibrant in statements
involving the derived functor of relative $THH$.

\section{Cyclotomic and precyclotomic structures}\label{sec:cyclotomic}
\label{sec:power}

In this section, we review the definitions of $p$-cyclotomic and
$p$-precyclotomic structures on $\bT$-equivariant orthogonal spectra. We
review the theory of commutative ring objects in these categories and
the universal property of $THH$ in this setting.  We define the
$p$-cyclotomic power operation $\Psi$ for commutative ring $p$-precyclotomic
spectra and compare it to the Adams operation $\psi^{p}$ on $THH$.

\begin{defn}
A $p$-precyclotomic spectrum $\cX$ consists of a $\bT$-equivariant
orthogonal spectrum $\uX$ and a map of $\bT$-equivariant
orthogonal spectra $r\colon \Phi \uX\to \uX$.  A $p$-cyclotomic spectrum
is a $p$-precyclotomic spectrum where the induced map 
\[
\bL\Phi \uX\to \Phi \uX\overto{r}\uX
\]
in the $\bT$-equivariant stable category is an
$\aF_{p}$-equivalence. A map of $p$-precyclotomic spectra $\cX\to\cY$ consists of
a map of the underlying $\bT$-equivariant orthogonal spectra $f\colon \uX\to \uY$
that makes the $p$-precyclotomic structure maps commute.
\[
\xymatrix{%
\Phi \uX\ar[r]^{r_{\cX}}\ar[d]_{\Phi f}&\uX\ar[d]^{f}\\
\Phi \uY\ar[r]_{r_{\cY}}&\uY
}
\]
A map of $p$-cyclotomic spectra is a map of $p$-precyclotomic
spectra. 
\end{defn}

The papers~\cite{BM-cycl} and~\cite{ABGHLM} consider the more
sophisticated structure of precyclotomic spectra; however, we work
exclusively in the $p$-precyclotomic setting and therefore simplify
terminology: 

\begin{ter}
For the purposes of this paper, we write \textbf{precyclotomic} for
$p$-precyclotomic and \textbf{cyclotomic} for $p$-cyclotomic.  We
write (pre)cyclotomic to handle both cases together, with an implicit
``respectively''. 
\end{ter}

Because the lax symmetric monoidal structure on $\Phi$ is not (known
to be) strong, we do not have a symmetric monoidal structure on
precyclotomic spectra that refines the usual smash product on
$\bT$-equivariant orthogonal spectra.  As a consequence, we cannot do
the usual thing and define associative ring precyclotomic spectra as
monoids for the smash product.  Instead we
follow~\cite[\S III.2]{BM-cycl23} to define ring structures as follows.  For
the following definition we note that for a $\bT$-equivariant associative ring
orthogonal spectrum $\uA$, $\Phi \uA$ inherits the structure of a
$\bT$-equivariant 
associative ring orthogonal spectrum using the lax symmetric monoidal
structure maps for $\Phi$: it has unit and product
\[
\bS\overto{\iota} \Phi \bS\overto{\Phi \eta} \Phi \uA, \qquad 
\Phi \uA\sma\Phi \uA\overto{\lambda}\Phi (\uA\sma \uA)\overto{\Phi \mu}\Phi \uA
\]
where $\eta$ and $\mu$ are the unit and product for $\uA$.

\begin{defn}[{\cite[\S III.2]{BM-cycl23}}]\noindent
\begin{enumerate}
\item 
An \term{associative ring precyclotomic spectrum} consists of a
precyclotomic spectrum $\cA$ together with a $\bT$-equivariant
associative ring orthogonal spectrum structure on $\uA$ such that the
structure map is a map of $\bT$-equivariant associative ring
orthogonal spectra. 
A map of
associative ring precyclotomic spectra is a map of precyclotomic
spectra that on the underlying $\bT$-equivariant orthogonal spectra is
a map of $\bT$-equivariant associative ring orthogonal spectra.
\item
A \term{commutative ring precyclotomic spectrum}
is an associative ring precyclotomic spectrum whose underlying
$\bT$-equivariant associative ring orthogonal spectrum is
commutative. The
category of commutative ring precyclotomic spectra is a full
subcategory of the category of associative ring precyclotomic spectra.
\item
Commutative and associative ring cyclotomic spectra are
commutative and associative ring precyclotomic spectra (respectively)
whose underlying precyclotomic spectra are cyclotomic.
The categories of associative ring
cyclotomic spectra and commutative ring cyclotomic spectra are 
full
subcategories of the category of associative ring precyclotomic spectra.
\item In any of the categories above, a \term{weak equivalence} is a
map that is an $\aF_{p}$-equivalence of the underlying
$\bT$-equivariant orthogonal spectra.
\end{enumerate}
\end{defn}

For an associative or commutative ring precyclotomic spectrum $\cA$, we
have corresponding notions of (pre)cyclotomic $\cA$-modules.  When $\uM$
is an $\uA$-module, $\Phi \uM$ obtains a canonical $\Phi \uA$-module
structure; we use this in the following definition.  

\begin{defn}\label{defn:precycmod}
Let $\cA$ be an associative ring precyclotomic spectrum.
A (pre)\-cyclotomic $\cA$-module is a (pre)cyclotomic spectrum
$\cM$, together with the structure of an $\uA$-module on $\uM$ making the following
action diagram commute.
\[
\xymatrix{%
\Phi \uA\sma \Phi \uM\ar[d]_{r_{\cA}\sma r_{\cM}}\ar[r]&\Phi \uM\ar[d]^{r_{\cM}}\\
\uA\sma \uM\ar[r]&\uM.
}
\]
A map of (pre)cyclotomic $\cA$-modules is a map of precyclotomic spectra
that is also a map of $\uA$-modules.
\end{defn}

When the underlying $\bT$-equivariant orthogonal spectrum of $\cA$ is in
the class $\EQC$ of Section~\ref{sec:equivariant} (for example, when
$\uA$ is cofibrant as a $\bT$-equivariant 
commutative or associative ring spectrum in any of the model
categories we consider), the smash product with $\uA$ monad
$\uA\sma(-)$ on $\bT$-equivariant orthogonal spectra lifts to a monad
$\cA\sma(-)$ on
precyclotomic spectra, and a precyclotomic $\cA$-module is precisely
an algebra over this monad as usual.

We concentrate on the case of commutative ring precyclotomic spectra
and we recall from~\cite[Thm.~B]{BM-cycl23} a ``shortcut'' for
describing (up to weak equivalence) the mapping space in this category in
terms of mapping spaces in $\bT$-equivariant commutative ring
orthogonal spectra.  Given commutative ring precyclotomic spectra $\cA$
and $\cB$, the space of maps of commutative ring precyclotomic spectra
$\aCom^{\PreCyc}(\cA,\cB)$ can be identified as the equalizer of
\[
\xymatrix@C-1pc{%
\aCom^{\bT}(\uA,\uB)\ar@<-.5ex>[r]\ar@<.5ex>[r]
&\aCom^{\bT}(\Phi \uA,\uB)
}
\]
where one map in the system takes the map $f\colon \uA\to \uB$ to $f\circ
r_{\cA}$ and the other takes it to $r_{\cB}\circ \Phi f$.

To translate this into a homotopical result, we use the model
structure on the category of commutative ring precyclotomic spectra
of~\cite[Thm.~D]{BM-cycl23}.  The fibrations and weak equivalences in this
structure are created by the forgetful functor to precyclotomic
spectra, which in turn are created by the forgetful functor to a variant
$\aF_{p}$-model structure on $\bT$-equivariant orthogonal spectra.
For a
cofibrant commutative ring precyclotomic spectrum $\cA$, the underlying
$\bT$-equivariant commutative ring spectra $\uA$ and $\Phi \uA$ are
cofibrant. When in addition $\cB$ is fibrant, the map from the
equalizer above to the corresponding homotopy equalizer is a weak
equivalence and~\cite[Thm.~B]{BM-cycl23} gives the following shortcut to
computing the derived mapping spaces.

\begin{prop}[{\cite[Thm.~B]{BM-cycl23}}]\label{prop:mappingspace}
Let $\cA$, $\cB$ be commutative ring precyclotomic spectra, and assume
that for $\cA$ the underlying $\bT$-equivariant commutative ring
orthogonal spectrum is cofibrant and for $\cB$ the underlying
$\bT$-equivariant commutative ring orthogonal spectrum is fibrant in
the model structure of~\cite[Thm.~C]{BM-cycl23}.  Then the
derived mapping space
$\bR\aCom^{\PreCyc}(\cA,\cB)$ of commutative ring precyclotomic spectra
maps from $\cA$ to $\cB$ is represented by the homotopy equalizer
of the maps
\[
\xymatrix@C-1pc{%
r_{\cA}^{*},r_{\cB*}\circ \Phi\colon \aCom^{\bT}(\uA,\uB)\ar@<-.5ex>[r]\ar@<.5ex>[r]
&\aCom^{\bT}(\Phi \uA,\uB).
}
\]
\end{prop}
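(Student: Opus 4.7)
The plan is to separate this into a point-set equalizer identification and a homotopical comparison between the strict and homotopy equalizers under the stated cofibrancy and fibrancy assumptions.

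At the point-set level, the identification $\aCom^{\PreCyc}(\cA,\cB) \iso \mathrm{Eq}\bigl(\aCom^{\bT}(\uA,\uB) \rightrightarrows \aCom^{\bT}(\Phi\uA,\uB)\bigr)$ is essentially tautological: by definition, a map of commutative ring precyclotomic spectra is a map $f\colon \uA\to\uB$ of $\bT$-equivariant commutative ring orthogonal spectra satisfying $f\circ r_{\cA}=r_{\cB}\circ\Phi f$, and the equalizer condition is literally this intertwining property. Applied at the full topological mapping-space level, this realizes $\aCom^{\PreCyc}(\cA,\cB)$ as the strict equalizer above $\aCom^{\bT}(\uA,\uB)$.

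The homotopical content lies in upgrading this strict equalizer to a homotopy equalizer. The plan is to invoke the model structure on $\aCom^{\PreCyc}$ constructed in [BM-cycl23]: its fibrations and weak equivalences are created by the forgetful functor to $\aCom^{\bT}$ equipped with a suitable $\aF_{p}$-model structure from [BM-cycl23], and the model structure is set up precisely so that $\Phi$ preserves cofibrations and acyclic cofibrations. Consequently, cofibrancy of $\cA$ gives cofibrancy of both $\uA$ and $\Phi\uA$ as $\bT$-equivariant commutative ring orthogonal spectra, and fibrancy of $\cB$ gives fibrancy of $\uB$; thus each of $\aCom^{\bT}(\uA,\uB)$ and $\aCom^{\bT}(\Phi\uA,\uB)$ represents the correct derived mapping space. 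To promote the strict equalizer to the homotopy equalizer, one views $\aCom^{\PreCyc}$ as a projective-style algebra category for $\Phi$ over $\aCom^{\bT}$, so that the derived mapping space out of a projective-cofibrant object into a fibrant object is given by the expected homotopy limit; alternatively, one can replace one of the parallel maps by a path-object fibration in $\aCom^{\bT}$ without changing the weak equivalence type of either the strict or the homotopy equalizer.

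The main obstacle is entirely the model-categorical one. In the standard model structure on $\aCom^{\bT}$, the functor $\Phi$ does not preserve weak equivalences between cofibrant objects (cf.\ Warning \ref{warn:epsilon}), so ordinary cofibrant replacement does not control $\Phi$ homotopically. The substance of the proof is therefore the construction and verification of the refined model structures of [BM-cycl23] in which $\Phi$ preserves cofibrations and interacts correctly with the forgetful functor; granting this input, the identification of the derived mapping space with the displayed homotopy equalizer is a formal consequence of the above.
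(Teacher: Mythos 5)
The paper does not give its own proof of this proposition: the statement is cited from \cite{BM-cycl23}, with the paragraph preceding it supplying the point-set equalizer identification and sketching the model-category setup. Your overall plan matches that sketch, and your acknowledgment that the real content lives in the construction of the \cite{BM-cycl23} model structures is accurate.

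There is, however, a hypothesis mismatch that you should address. The proposition assumes only that the \emph{underlying} $\bT$-equivariant commutative ring orthogonal spectrum $\uA$ is cofibrant, not that $\cA$ is cofibrant in $\aCom^{\PreCyc}$. Your argument runs through ``cofibrancy of $\cA$ gives cofibrancy of both $\uA$ and $\Phi\uA$,'' which is the stronger condition. The weaker condition is exactly the one the paper relies on: immediately after the proposition it specializes to $\cA = THH(R)$, where $\uA$ is cofibrant in $\aCom^{\bT}$, while (as noted in the proof of Theorem~\ref{thm:precycstructure}) $THH(R)$ is generally not cofibrant as a commutative ring precyclotomic spectrum. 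To recover the statement as written you need an additional step: note that $\uA$ cofibrant already forces $\Phi\uA$ cofibrant in the \cite{BM-cycl23} model structure, so both mapping spaces in the displayed diagram compute derived mapping spaces and the homotopy equalizer is invariant under $\aF_p$-equivalences of the input having cofibrant underlying object; then take a cofibrant replacement $\cA'\to\cA$ in $\aCom^{\PreCyc}$ (which is such an equivalence), reduce to the cofibrant-$\cA'$ case your argument handles, and identify the strict equalizer there with $\bR\aCom^{\PreCyc}(\cA,\cB)$. Two smaller points: replacing one leg by a path-object fibration preserves the homotopy equalizer tautologically but not the strict equalizer, so that alternative does not by itself give what you need; and the fact that $\Phi$ fails to preserve weak equivalences between standard-cofibrant commutative ring spectra is stated in Section~\ref{sec:equivariant}, not in Warning~\ref{warn:epsilon}, which concerns $\epsilon$ rather than $\Phi$.
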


Specializing to the case when $\cA=THH(R)$ for a cofibrant commutative
ring orthogonal spectrum $R$, not only is the underlying $\bT$-equivariant
commutative ring orthogonal spectrum $\uA$ cofibrant, but the
precyclotomic structure map $\Phi \uA\to \uA$ is an isomorphism.  Using
this isomorphism, the derived mapping space $\bR\aCom^{\PreCyc}(\cA,\cB)$
is represented by the homotopy equalizer of the maps
\[
\xymatrix@C-1pc{%
\id,r_{\cA}^{*-1}\circ (r_{\cB*}\circ \Phi) \colon \aCom^{\bT}(\uA,\uB)\ar@<-.5ex>[r]\ar@<.5ex>[r]
&\aCom^{\bT}(\uA,\uB).
}
\]
The universal property of Proposition~\ref{prop:tensor} lets us identify the derived
mapping space $\bR\aCom^{\PreCyc}(\cA,\cB)$ in this case as a homotopy equalizer 
\begin{equation}\label{eq:mapsthh}
\xymatrix@C-1pc{%
\id,\varsigma \colon \aCom(R,B)\ar@<-.5ex>[r]\ar@<.5ex>[r]
&\aCom(R,B)
}
\end{equation}
for some map $\varsigma$, which we now describe.  It requires a
structure on commutative ring precyclotomic spectra that we call the
cyclotomic power operation. 

\begin{defn}\label{def:power-operation}
Let $\cB$ be a commutative ring precyclotomic spectrum.  The \term{cyclotomic
power operation} is the composite
\[
\Psi \colon B \to \Phi^{C_p} N_e^{C_p} B \to \Phi^{C_p} \uB \overto{r_{\cB}} B,
\]
where the first map is the diagonal, the second map is $\Phi^{C_p}$
applied to the map~\eqref{eq:counit}, and the third map is
the precyclotomic structure map.
\end{defn}

\begin{prop}\label{prop:Psi}
The map $\varsigma$ in~\eqref{eq:mapsthh} is the map $\Psi_{*}$ given
by post-composition with the cyclotomic power operation on $\cB$.
\end{prop}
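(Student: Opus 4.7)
The plan is to trace the composition defining $\varsigma$ through the adjunction of Proposition~\ref{prop:tensor} and match it with post-composition with $\Psi_B$.

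First, unpack the defining formula. Since $\cA=THH(R)$ carries its canonical cyclotomic structure, the map $r_{\cA}\colon \Phi THH(R)\to THH(R)$ is the inverse of the diagonal isomorphism $d_{THH(R)}\colon THH(R)\overto{\iso}\Phi THH(R)$, so the self-map $r_{\cA}^{*-1}\circ r_{\cB*}\circ \Phi$ of $\aCom^{\bT}(THH(R),\uB)$ sends $f\colon THH(R)\to \uB$ to $r_{\cB}\circ \Phi f\circ d_{THH(R)}$. Writing $\eta_R\colon R\to i\,THH(R)$ for the unit of the $THH\dashv i$ adjunction, the induced self-map $\varsigma$ of $\aCom(R,B)$ sends $g$ (with $\bT$-adjoint $f=\epsilon^{\bT}_{\uB}\circ N_e^{\bT}(g)$) to the composite
\[
\varsigma(g)=r_{\cB}\circ i(\Phi f)\circ i(d_{THH(R)})\circ \eta_R \colon R \to B.
\]

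The technical heart of the argument is to identify the first two maps in this composite with the diagonal on the commutative ring $R$. Namely, I would show
\[
i(d_{THH(R)})\circ \eta_R = \Phi^{C_p}(\iota)\circ \delta_R,
\]
where $\delta_R\colon R\to\Phi^{C_p}N_e^{C_p}R$ is the diagonal appearing as the first map in Definition~\ref{def:power-operation} and $\iota\colon N_e^{C_p}R\to N_e^{\bT}R=THH(R)$ is the canonical map coming from the inclusion $C_p<\bT$. (Recall that $i\Phi X$ and $\Phi^{C_p}X$ have the same underlying non-equivariant spectrum, so both sides land in the same object.) This compatibility essentially says that the cyclotomic diagonal on $THH(R)$ restricts along the unit to the power operation on the commutative ring $R$; I expect it is implicit in the construction of the cyclotomic structure on $THH$ from \cite[2.19]{ABGHLM}/\cite{BM-cycl23} via $p$-fold subdivision of the cyclic bar construction, and verifying it explicitly at the point-set level is the main obstacle.

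Granting this compatibility, the remainder is formal. By naturality of $N_e^{C_p}$ and the compatibility of counits over $C_p<\bT$ we have $f\circ\iota=\epsilon^{C_p}_{\uB}\circ N_e^{C_p}(g)$, and by naturality of $\delta$ we have $\Phi^{C_p}(N_e^{C_p}g)\circ\delta_R=\delta_B\circ g$. Combining these with the compatibility above gives
\[
\varsigma(g)=r_{\cB}\circ\Phi^{C_p}(f)\circ\Phi^{C_p}(\iota)\circ\delta_R=r_{\cB}\circ\Phi^{C_p}(\epsilon^{C_p}_{\uB})\circ\delta_B\circ g=\Psi_B\circ g=\Psi_*(g),
\]
as desired.
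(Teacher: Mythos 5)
Your proposal is correct and follows essentially the same path as the paper's proof: after unwinding $\varsigma$ via the $THH\dashv i$ adjunction, the crux is the identity $i(d_{THH(R)})\circ\eta_R = \Phi^{C_p}(\iota)\circ\delta_R$ (equivalently, that the adjoint of $\Phi^{C_p}(\iota)\circ\delta_R$ is $r_{\cA}^{-1}$), which the paper likewise asserts without proof in the proof of Proposition~\ref{prop:Psi} via the diagram~\eqref{eq:pfAdams}, and then closes the argument by naturality exactly as you do. The point-set verification you flag as the remaining obstacle is essentially what the paper carries out in the subsequent proof of Proposition~\ref{prop:PsiTHH}, so your level of rigor here matches the paper's.
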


\begin{proof}
Given a map $f\in \aCom(R,B)$, consider the diagram in
(non-equivariant) commutative ring orthogonal spectra
\begin{equation}\label{eq:pfAdams}
\begin{gathered}
\xymatrix{%
R\ar[d]\ar[r]^-{\iso}
  \ar@/_3em/[dd]_-{f}&\Phi^{C_{p}}(N_{e}^{C_{p}}R)\ar[d]\ar@{.>}[dr]
\\
THH(R)\ar[r]^-{\iso}\ar[d]^{\tilde f}
&\Phi^{C_{p}}(N_{e}^{C_{p}}THH(R))
  \ar[d]^{\Phi^{C_{p}}N_{e}^{C_{p}}\tilde f}\ar[r]
&\Phi^{C_{p}}THH(R)\ar[d]^{\Phi^{C_{p}}\tilde f}\\
B\ar[r]&\Phi^{C_{p}}(N_{e}^{C_{p}}B)\ar[r]&\Phi^{C_{p}}\uB
}
\end{gathered}
\end{equation}
where $\tilde f$ is the underlying non-equivariant map of the map
$THH(R)\to \uB$ adjoint to $f$
under Proposition~\ref{prop:tensor} and in the bottom two rows, the maps are the
first two maps in the definition of $\Psi$.  In the top row the
isomorphism is the diagonal.  The solid part of the diagram then
commutes, and if we fill in the dotted map $R\to \Phi^{C_{p}}THH(R)$
as the map induced on $\Phi^{C_{p}}$ by the inclusion
$N_{e}^{C_{p}}R\to THH(R)$, the whole diagram commutes.  When we
interpret the top isomorphism followed by the dotted map as a
(non-equivariant) map $R\to \Phi 
THH(R)$, the adjoint map 
\[
THH(R)\to \Phi THH(R)
\]
under Proposition~\ref{prop:tensor} is the map $r_{\cA}^{-1}$.  Viewing the overall
composite map $R\to \Phi^{C_{p}}\uB$ as a (non-equivariant) map $R\to
\Phi \uB$, the adjoint map 
\[
THH(R)\to \Phi \uB
\]
is then $\Phi \tilde f \circ r_{\cA}^{-1}$. 
\end{proof}

The diagram in the proof above also identifies the cyclotomic power
operation $\Psi$ on $THH(R)$.

\begin{prop}\label{prop:PsiTHH}
Let $R$ be a cofibrant commutative ring orthogonal spectrum.  Then $\Psi \colon
THH(R)\to THH(R)$ is the Adams operation 
given by tensoring $R$ with the $p$-fold covering map on $\bT$ (see,
for example, \cite[\S 10]{ABGHLM}).
\end{prop}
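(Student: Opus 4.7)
Both $\Psi$ and the Adams operation $\psi^p$ are endomorphisms of the commutative ring orthogonal spectrum $THH(R)\cong R\otimes\bT$ (tensor in commutative ring orthogonal spectra). By the universal property of this tensor, such an endomorphism is determined in the homotopy category by its restriction to the $R$-factor (a commutative ring map $R\to THH(R)$) together with its restriction to the circle factor (a pointed map $\bT\to\Omega^\infty THH(R)$). For $\psi^p=R\otimes p$ with $p\colon\bT\to\bT$ the $p$-fold cover, these restrictions are the unit $\eta\colon R\to THH(R)$ and the $p$-fold cover, respectively. So it suffices to verify the same two restrictions for $\Psi$.

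The restriction $\Psi\circ\eta=\eta$ follows by specializing the proof of Proposition~\ref{prop:Psi} to $\cB=THH(R)$ with $\tilde f=\mathrm{id}$. Diagram~\eqref{eq:pfAdams} then collapses as the right-hand column maps become identities, and the outer composite $R\to THH(R)$ is adjoint under Proposition~\ref{prop:tensor} to $r_{\cA}\circ\Phi(\mathrm{id})\circ r_{\cA}^{-1}=\mathrm{id}_{THH(R)}$, which corresponds under this adjunction precisely to $\eta$.

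The restriction to the $\bT$-factor is the main calculation. I would begin by identifying the precyclotomic structure isomorphism $r_{\cA}\colon\Phi\,THH(R)\to THH(R)$ explicitly, using the factorization $THH(R)=N_e^{\bT}R=N_{C_p}^{\bT}(N_e^{C_p}R)$ together with the Hill--Hopkins--Ravenel identification of $\Phi^{C_p}$ applied to a norm from a subgroup, which yields $\Phi^{C_p}N_e^{\bT}R\cong N_e^{\bT/C_p}R$. In the tensor picture $THH(R)=R\otimes\bT$, this identifies $r_\cA$ with the map induced by the quotient $\bT\to\bT/C_p$ followed by $\rho^{-1}\colon\bT/C_p\cong\bT$, which is the $p$-fold cover. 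Combining this with the HHR diagonal $R\otimes\bT\to\Phi^{C_p}N_e^{C_p}(R\otimes\bT)$ (which restricts on the circle factor to the inclusion of the fixed $\bT$ in $\bT^{\times p}$) and with the $C_p$-norm counit $N_e^{C_p}\,i\,THH(R)\to THH(R)$ (where $i$ denotes the forgetful functor of Notation~\ref{notn:forget}), the three composites defining $\Psi$ in Definition~\ref{def:power-operation} combine on the $\bT$-factor of $R\otimes\bT$ to produce the $p$-fold cover, as required.

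The main obstacle is the explicit identification of $r_\cA$ with the $p$-fold cover in the tensor presentation: this requires carefully tracking the change-of-universe in $\rho^*$ together with the HHR norm iso $\Phi^{C_p}N_e^{\bT}R\cong N_e^{\bT/C_p}R$, and ensuring the $\bT$-action compatibilities throughout. Once this identification is in place, the remaining verification is a direct diagram chase in the tensor $R\otimes\bT$.
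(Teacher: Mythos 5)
Your plan --- trace each of the three maps of Definition~\ref{def:power-operation} through the tensor picture $THH(R)=R\otimes\bT$ --- is broadly in the spirit of the paper's proof, but the way you allocate the $p$-fold cover among those maps is incorrect and hides where the real work lies.  The claim that $r_{\cA}$ ``is the $p$-fold cover'' cannot be right: for cofibrant $R$, $r_{\cA}$ is the inverse of the diagonal isomorphism and is therefore itself an isomorphism, whereas $R\otimes p$ (for $p\colon\bT\to\bT$ the $p$-fold covering $z\mapsto z^{p}$) is not.  Under the natural identifications, $r_{\cA}$ corresponds to $R$ tensored with the \emph{homeomorphism} $\rho^{-1}\colon\bT/C_{p}\iso\bT$, not with the covering map.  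The $p$-fold cover actually comes from the middle map of $\Psi$: one has $N_{e}^{C_{p}}(R\otimes\bT)\iso R\otimes(\bT\times C_{p})$ (the coproduct in $\aCom$ gives $p$ \emph{disjoint} copies of $\bT$, not $\bT^{\times p}$, so there is no ``fixed $\bT$'' to speak of), the $C_{p}$-norm counit is induced by the translation map $\bT\times C_{p}\to\bT$, and $\Phi^{C_{p}}$ of the counit is then induced by the quotient $\bT\to\bT/C_{p}$.  It is the composite of that quotient with $\rho^{-1}$ (i.e.\ steps two and three together) that produces $R\otimes p$; neither step alone does.

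This misallocation matters because it obscures the mechanism of the proof.  The opening reduction is also not quite right as stated: the universal property of $R\otimes\bT$ identifies a commutative ring map $R\otimes\bT\to B$ with a map of spaces $\bT\to\aCom(R,B)$, and the loop datum lives in the commutative ring mapping space $\aCom(R,THH(R))$, not in $\Omega^{\infty}THH(R)$, so ``restriction to the circle factor'' in the sense you describe is not a well-defined invariant of an arbitrary self-map.  What the paper actually does is quite different: it shows that the composite of the first two maps of $\Psi$, which are built from point-set natural transformations, is $\bT$-equivariant for the $\bT$-action on $N^{\cy}(R)$ supplied by naturality, and that its target is $C_{p}$-fixed; the adjunction of Proposition~\ref{prop:tensor} then forces a factorization through the quotient $\bT\to\bT/C_{p}$, and the $p$-fold cover emerges from that factorization combined with the identification of the adjoint map as $r_{THH(R)}^{-1}$.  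Verifying that twisted equivariance, with the attendant universe and change-of-group bookkeeping, is exactly the ``careful check'' the paper alludes to, and your sketch does not supply it.
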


\begin{proof}
The proof of Proposition~\ref{prop:Psi} presents a
diagram~\eqref{eq:pfAdams} in which the composite map $R\to
\Phi^{C_{p}}THH(R)$ is the map adjoint (under
Proposition~\ref{prop:tensor}) to $r^{-1}_{THH(R)}$, the inverse of
the cyclotomic structure map for $THH(R)$.  The operation $\Psi$ on
$THH(R)$ is the composite of the middle row of that
diagram $THH(R)\to \Phi^{C_{p}}THH(R)$ with the cyclotomic
structure map $r_{THH(R)}$.  It follows that the composite of the
inclusion $R\to THH(R)$ with the operation $\Psi$ is again the
inclusion $R\to THH(R)$. If the operation $\Psi$ were equivariant, the
adjunction of Proposition~\ref{prop:tensor} would identify it as the
identity map; however, it is not equivariant for the usual
$\bT$-action on $THH$.  The proof of the
statement is essentially a careful check that it is equivariant when
the target is given the $\bT$-action pulled back from the $p$-fold
covering map $\bT\to \bT$. 

Working on the point-set level, $\bT$-equivariantly, $THH(R)$ is given
by the cyclic bar construction followed by change of universe 
\[
THH(R)=I_{\bR^{\infty}}^{U}N^{\cy}(R)=I_{\bR^{\infty}}^{U}(R\otimes \bT).
\]
(As functors from commutative ring orthogonal spectra to
$\bT$-equivariant orthogonal spectra indexed on $\bR^{\infty}$, we can
identify the cyclic bar construction as the tensor with $\bT$, and we
use these descriptions interchangeably.)  Working non-equivariantly is
in particular working in the universe $\bR^{\infty}$; keeping track of
universes, the first map in $\Psi$, the diagonal map, is
\[
N^{\cy}(R)\overto{\iso}\Phi^{C_{p}}(N_{e}^{C_{p}}N^{\cy}(R)).
\]
We recall that $N^{C_{p}}_{e}$ is a continuous point-set functor from non-equivariant
orthogonal spectra indexed on $\bR^{\infty}$ to $C_{p}$-equivariant orthogonal
spectra indexed on the complete universe $U_{C_{p}}$ (obtained by
restricting the $\bT$-action on $U$ to $C_{p}$), and $\Phi^{C_{p}}$ is
a continuous point-set functor on the same categories in the opposite direction.
The map above is natural in maps on $N^{\cy}(R)$ in (non-equivariant) orthogonal
spectra and so is a map of $\bT$-equivariant
orthogonal spectra indexed on $\bR^{\infty}$ (using the 
$\bT$-action on $N^{\cy}(R)$). For the next map in $\Psi$, written
\[
\Phi^{C_{p}}N_{e}^{C_{p}}THH(R)\to\Phi^{C_{p}} THH(R),
\]
we are looking at $THH(R)$ on the right as a $C_{p}$-equivariant
orthogonal spectrum indexed on $U_{C_{p}}$. This map is
$\Phi^{C_{p}}$ applied to the map
\[
N_{e}^{C_{p}}N^{\cy}(R)\to I_{\bR^{\infty}}^{U_{C_{p}}}N^{\cy}(R)
\]
of $C_{p}$-equivariant orthogonal spectra indexed on $U_{C_{p}}$.
Again by naturality, using the action of 
$\bT$ on
$N^{\cy}(R)$ in the category of non-equivariant orthogonal spectra,
the displayed map is $\bT$-equivariant in the category of
$C_{p}$-equivariant orthogonal spectra indexed on $U_{C_{p}}$.
The composite
\begin{equation}\label{eq:composite}
N^{\cy}(R)\overto{\iso}\Phi^{C_{p}}(N_{e}^{C_{p}}N^{\cy}(R))\to 
\Phi^{C_{p}}(I_{\bR^{\infty}}^{U_{C_{p}}}N^{\cy}(R))
\end{equation}
is a map of $\bT$-equivariant orthogonal spectra indexed on
$\bR^{\infty}$ and so is determined by the restriction of its
underlying map of non-equivariant spectra along the
inclusion $R\to N^{\cy}(R)$, which was analyzed in~\eqref{eq:pfAdams}.
Rewritten to emphasize the universes, it is the map
\begin{equation}\label{eq:restreq}
R\overto{\iso}\Phi^{C_{p}}(N_{e}^{C_{p}}R)\to \Phi^{C_{p}}(I_{\bR^{\infty}}^{U_{C_{p}}}N^{\cy}(R))
\end{equation}
induced by the map
\[
N_{e}^{C_{p}}R = I_{\bR^{\infty}}^{U_{C_{p}}}(R\otimes C_{p})\to 
I_{\bR^{\infty}}^{U_{C_{p}}}(R\otimes \bT)=I_{\bR^{\infty}}^{U_{C_{p}}}N^{\cy}(R).
\]

Because the target of~\eqref{eq:restreq} is $C_{p}$-fixed, we can
treat it as a $\bT/C_{p}$-equivariant orthogonal spectrum indexed on
$\bR^{\infty}$ and the identification of~\eqref{eq:restreq} above
shows that under the isomorphism $\rho\colon \bT\iso \bT/C_{p}$, the adjoint map
\[
R\otimes (\bT/C_{p})\to \Phi^{C_{p}}(I_{\bR^{\infty}}^{U_{C_{p}}}N^{\cy}(R))
\]
becomes the underlying non-equivariant map of $r_{THH(R)}^{-1}$.  It
follows that the map~\eqref{eq:composite} is the composite of the
$p$-fold cover map $\bT\to \bT$ and $r_{THH(R)}^{-1}$.  Thus,
composing with the underlying non-equivariant map of $r_{THH(R)}$,
$\Psi$ is the map induced by the $p$-fold cover.
\end{proof}

\section{Relative cyclotomic structures}\label{sec:relcyc}

The purpose of this section is to explain our new framework for the
existence of cyclotomic structures on relative $THH$.  The work
of~\cite{ABGHLM} shows that for a commutative ring (pre)cyclotomic
spectrum $\cR$ and an $R$-algebra $A$, $THH^{\uR}(A)$
obtains a natural (pre)cyclotomic structure precisely when the
canonical map of $\bT$-equivariant commutative ring spectra $THH(R)
\to \uR$ is a (pre)cyclotomic map on the point-set level.  In this
section, we generalize this to the case when the map $THH(R)\to \uR$
lifts to a map of commutative ring (pre)cyclotomic spectra in the
homotopy category.  Our framework depends on the characterization in
the previous section of derived mapping spaces by allowing us to
specify in terms of the cyclotomic power operation when the canonical
map $THH(R) \to \uR$ lifts.

Specifically, let $\cR$ be a commutative ring
precyclotomic spectrum, and assume without loss of generality that
$\cR$ is cofibrant and fibrant in that category for the model
structure of~\cite[Thm.~D]{BM-cycl23}.  Then the
underlying non-equivariant commutative ring orthogonal spectrum $R$
is cofibrant and fibrant in the model structure
of~\cite[III.4.1]{BM-cycl23} 
(by~\cite[III.1.3]{BM-cycl23}).
In this case, Proposition~\ref{prop:Psi} specializes to the following
result.

\begin{prop}\label{prop:dmapTHH}
Let $\cR$ be a cofibrant-fibrant commutative ring precyclotomic
spectrum.  Then the derived mapping space
$\bR\aCom^{\PreCyc}(THH(R),\cR)$ is modeled by the homotopy equalizer
of the self-maps 
\[
\id,\Psi_{*}\colon \aCom(R,R)\to \aCom(R,R). 
\]
\end{prop}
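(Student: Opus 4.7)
The plan is to deduce Proposition~\ref{prop:dmapTHH} by combining Proposition~\ref{prop:mappingspace} with Proposition~\ref{prop:Psi}, using that the precyclotomic structure map on $THH(R)$ is an isomorphism. First I would verify the hypotheses of Proposition~\ref{prop:mappingspace} with $\cA = THH(R)$ (equipped with its canonical precyclotomic structure coming from the diagonal isomorphism $THH(R) \iso \Phi THH(R)$) and $\cB = \cR$. Since $\cR$ is cofibrant in the model structure of~\cite{BM-cycl23} on commutative ring precyclotomic spectra, its underlying non-equivariant commutative ring orthogonal spectrum $R$ is cofibrant; then $THH(R)$, as the free $\bT$-equivariant commutative ring orthogonal spectrum on $R$ (Proposition~\ref{prop:tensor}), has cofibrant underlying $\bT$-equivariant commutative ring orthogonal spectrum. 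The assumption that $\cR$ is fibrant supplies the other hypothesis.

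Next, I would apply Proposition~\ref{prop:mappingspace} to write $\bR\aCom^{\PreCyc}(THH(R),\cR)$ as the homotopy equalizer of
\[
r_{THH(R)}^{*},\ r_{\cR*}\circ \Phi \colon \aCom^{\bT}(THH(R),\uR)\rightrightarrows \aCom^{\bT}(\Phi THH(R),\uR).
\]
The key simplification is that, because $R$ lies in the class $\EQC$ (being cofibrant), the structure map $r_{THH(R)}\colon \Phi THH(R) \to THH(R)$ is an isomorphism; see the discussion preceding~\eqref{eq:mapsthh}. Pulling back along this isomorphism identifies both mapping spaces, turning the homotopy equalizer into one of two self-maps of $\aCom^{\bT}(THH(R),\uR)$, where one leg is the identity and the other is $r_{THH(R)}^{*-1}\circ r_{\cR *}\circ \Phi$.

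Finally, I would apply the adjunction of Proposition~\ref{prop:tensor} to rewrite $\aCom^{\bT}(THH(R),\uR)\iso \aCom(R,R)$ (using $R = i\uR$). Under this identification the identity leg remains the identity, and Proposition~\ref{prop:Psi} (whose $\varsigma$ is exactly $r_{THH(R)}^{*-1}\circ r_{\cR *}\circ \Phi$ transported across the adjunction) identifies the other leg as post-composition with the cyclotomic power operation $\Psi\colon R \to R$ of $\cR$. This yields exactly the homotopy equalizer
\[
\id,\ \Psi_{*}\colon \aCom(R,R)\rightrightarrows \aCom(R,R),
\]
as claimed.

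I do not expect a substantive obstacle here: all three propositions being invoked do the real work. The only care needed is bookkeeping around the cofibrancy of $THH(R)$ as a commutative ring precyclotomic spectrum (as opposed to just its underlying $\bT$-equivariant ring), to ensure the mapping-space shortcut of Proposition~\ref{prop:mappingspace} genuinely computes the derived mapping space; this should follow since $THH$ is left Quillen as the free functor to $\bT$-equivariant commutative rings, and the precyclotomic structure map on $THH(R)$ being an isomorphism means it adds no further cofibrancy obligation beyond that of $R$.
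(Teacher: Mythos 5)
Your proposal is correct and follows essentially the same route as the paper: the paper's proof of this proposition is the one-line remark that it is the specialization of Proposition~\ref{prop:Psi} (together with the discussion around~\eqref{eq:mapsthh}, which itself invokes Proposition~\ref{prop:mappingspace} and the isomorphism $\Phi\,THH(R)\iso THH(R)$) to the case $\cB=\cR$, $B=R$. One small note: your closing worry about cofibrancy of $THH(R)$ \emph{as a precyclotomic spectrum} is moot, since Proposition~\ref{prop:mappingspace} only asks that the underlying $\bT$-equivariant commutative ring orthogonal spectrum of $\cA$ be cofibrant — and in fact the paper later (in the proof of Theorem~\ref{thm:precycstructure}) explicitly notes that $THH(R)$ is generally \emph{not} cofibrant as a commutative ring precyclotomic spectrum, so the weaker hypothesis of Proposition~\ref{prop:mappingspace} is exactly what is needed here.
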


In particular, 
the canonical map of $\bT$-equivariant commutative ring orthogonal
spectra $THH(R)\to \uR$ lifts (up to homotopy) to a map 
of commutative ring precyclotomic spectra $THH(R) \to \cR$ exactly
when $\Psi$ is homotopic to the identity.  We encapsulate this in the
following definition.

\begin{defn}
A \term{(pre)cyclotomic base} is a commutative ring (pre)cyclotomic spectrum
$\cR$ together with a choice of homotopy from $\Psi$ to the identity in
the category of commutative ring orthogonal spectra.  
\end{defn}

The following is the main foundational theorem of the paper and gives
a more specific formulation of Theorem~A of the introduction. 

\begin{thm}\label{thm:precycstructure}
Let $\cR$ be a (pre)cyclotomic base, whose underlying
$\bT$-equivariant commutative ring orthogonal spectrum is cofibrant in
the model structure of~\cite[Thm.~C]{BM-cycl23} (for example, when $\cR$ is
cofibrant in the model structure of~\cite[Thm.~D]{BM-cycl23} on commutative
ring (pre)cyclotomic spectra).  Then the derived relative $THH$
functor $\LTHH^{\cR}$ lifts to a functor from associative $R$-algebras
to (pre)cyclotomic $\cR$-modules.
\end{thm}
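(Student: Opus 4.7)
The plan is to use the (pre)cyclotomic base structure on $\cR$ to produce a strict precyclotomic model for the canonical map $THH(R) \to \uR$, and then transport the canonical cyclotomic structure on $THH(A)$ across the relative smash product.

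First, by Proposition~\ref{prop:dmapTHH}, the chosen homotopy $\Psi \htp \id$ in the (pre)cyclotomic base structure specifies a point of $\bR\aCom^{\PreCyc}(THH(R), \cR)$ whose underlying map of $\bT$-equivariant commutative ring orthogonal spectra is the canonical map $THH(R) \to \uR$. Using the model structure of~\cite{BM-cycl23} on commutative ring precyclotomic spectra, I would rectify this datum to a strict map $\widetilde{THH(R)} \to \cR$, where $\widetilde{THH(R)}$ is a cofibrant commutative ring (pre)cyclotomic spectrum weakly equivalent to $THH(R)$. Here we use that $R$ is cofibrant (in the sense of~\cite{BM-cycl23}) as a consequence of the cofibrancy hypothesis on $\cR$, so that $THH(R)$ indeed carries its canonical commutative ring cyclotomic structure and admits such a replacement.

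Next, for an associative $R$-algebra $A$, observe that $THH(A)$ naturally carries an associative ring (pre)cyclotomic structure, together with a canonical $THH(R)$-action making it a (pre)cyclotomic $THH(R)$-module in the sense of Definition~\ref{defn:precycmod}; via the rectification, this passes to a (pre)cyclotomic $\widetilde{THH(R)}$-module structure. I would then form the derived smash product $THH(A) \sma^{\bL}_{\widetilde{THH(R)}} \cR$ in the category of (pre)cyclotomic $\cR$-modules. The cofibrancy of $\cR$ and the resulting cofibrancy of $\widetilde{THH(R)}$ and of (a cofibrant replacement of) $THH(A)$ place all underlying spectra in the class $\EQC$, so the lax monoidal map $\lambda$ for $\Phi$ is an isomorphism where needed, and the smash-product construction lifts cleanly to (pre)cyclotomic modules as indicated after Definition~\ref{defn:precycmod}. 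By Proposition~\ref{prop:relisdersma} and Terminology~\ref{ter:derthh}, the underlying $\bT$-equivariant $\uR$-module of this construction agrees with $\LTHH^{\cR}(A)$, producing the required functorial lift.

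The hard part is the rectification in the first step. One must verify that the homotopy-coherent data produced by Proposition~\ref{prop:dmapTHH} can be represented by a strict precyclotomic map out of a \emph{commutative} ring precyclotomic model of $THH(R)$ (not merely an associative one), and that different choices of rectification yield weakly equivalent (pre)cyclotomic $\cR$-modules so that $A \mapsto THH(A) \sma^{\bL}_{\widetilde{THH(R)}} \cR$ is a well-defined functor to the homotopy category of (pre)cyclotomic $\cR$-modules. Because $\Phi$ is only lax symmetric monoidal in general, this check depends crucially on the model-categorical machinery of~\cite{BM-cycl23}, specifically the existence of a model structure on commutative ring precyclotomic spectra in which cofibrant objects are preserved under $\Phi$ and on which the analysis of the class $\EQC$ applies.
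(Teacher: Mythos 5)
Your proposal takes essentially the same approach as the paper: use Proposition~\ref{prop:dmapTHH} to produce, from the chosen homotopy $\Psi \htp \id$, a point of the derived mapping space $\bR\aCom^{\PreCyc}(THH(R),\cR)$ over the canonical map $THH(R)\to\uR$; rectify to a strict precyclotomic map out of a cofibrant commutative ring precyclotomic replacement of $THH(R)$; and then smash over that replacement with $\cR$, using the class $\EQC$ to make $\Phi$ behave well on the smash product.

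One issue you flag but do not resolve, and which the paper handles carefully, is that the rectified map $g\colon\cT\to\cR$ out of the cofibrant replacement $\cT\to THH(R)$ is only \emph{homotopic} (as a $\bT$-equivariant commutative ring map) to the composite of $\cT\to THH(R)$ with the canonical map $THH(R)\to\uR$, not equal to it. Consequently $g^{*}\uR$ and $\uR$ are genuinely different $\uT$-module structures on the same underlying spectrum, and the identification of the underlying $\bT$-equivariant $\uR$-module of $THH(A)\sma_{\uT}g^{*}\uR$ with $\LTHH^{\cR}(A)$ is not a direct citation of Proposition~\ref{prop:relisdersma}. The paper resolves this by choosing the homotopy $G$ from $c\circ q$ to $g$ supplied by the contractible space of lifts, using it to define a $\uT\otimes I$-module structure on $\uR$, and writing down the explicit zigzag
\[
THH(A)\sma^{\bL}_{THH(R)}\uR\overfrom{\simeq}
THH(A)\sma^{\bL}_{\uT}\uR\overto{\simeq}
THH(A)\sma^{\bL}_{\uT\otimes I}\uR\overfrom{\simeq}
THH(A)\sma^{\bL}_{\uT}g^{*}\uR.
\]
You should make this comparison explicit rather than appealing directly to Proposition~\ref{prop:relisdersma}. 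Likewise, ``form the derived smash product in the category of (pre)cyclotomic $\cR$-modules'' deserves unpacking: the paper builds the precyclotomic structure map by hand after cofibrantly replacing $g^{*}\cR$ as a precyclotomic $\cT$-module and using that $\cT$ lies in $\EQC$, so that $\Phi\uN\sma_{\Phi\uT}\Phi\uM\to\Phi(\uN\sma_{\uT}\uM)$ is an isomorphism. That explicit construction is what your appeal to $\lambda$ being an isomorphism is implicitly standing in for.
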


\begin{proof}
Replacing $\cR$ by a weakly equivalent commutative ring
(pre)cyclotomic spectrum if necessary, we can assume without loss of
generality that $\cR$ is fibrant in the model structure of~\cite[Thm.~D]{BM-cycl23}.
By Proposition~\ref{prop:derrelthh}, for $A$ cofibrant as an associative
$R$-algebra, $\LTHH^{\cR}(A)$ is represented by the point-set construction
\[
THH^{\cR}(A) = THH(A)\sma_{THH(R)}\uR \simeq THH(A)\sma^{\bL}_{THH(R)}\uR.
\]
By the hypothesis on $\Psi$ and Proposition~\ref{prop:Psi}, the
discussion around~\eqref{eq:mapsthh} implies that the diagram in the
category of $\bT$-equivariant commutative ring orthogonal spectra
\[
\xymatrix{%
\Phi THH(R)\ar[r]\ar[d]_{r_{THH(R)}}&\Phi \uR\ar[d]^{r_{\cR}}\\
THH(R)\ar[r]&\uR
}
\]
commutes up to (the given) homotopy. (This specifies an element of the
homotopy equalizer in Proposition~\ref{prop:mappingspace} for
$\cA=THH(R)$ and $\cB=\cR$.) While $THH(R)$ is cofibrant in
the category of $\bT$-equivariant commutative ring orthogonal spectra,
it is generally not cofibrant in the category of commutative ring
cyclotomic spectra. Let $\cT\to THH(R)$ be a cofibrant
approximation in the latter category; the corresponding diagram in
$\uT$ and $\uR$ then also commutes 
up to (the restriction of the given) homotopy, specifying an element
of the homotopy equalizer in Proposition~\ref{prop:mappingspace} (for
$\cA=\cT$ and $\cB=\cR$).  Since $\uT$ is cofibrant
and $\uR$ is fibrant, Proposition~\ref{prop:mappingspace} implies that the
composite map of $\bT$-equivariant commutative ring orthogonal spectra
$\uT\to \uR$ is homotopic to a precyclotomic map $g\colon \cT\to \cR$: the map
from the equalizer to the homotopy equalizer in
Proposition~\ref{prop:mappingspace} is
a weak equivalence.  The space of choices of such a $g$ together with a path $H$ in the
homotopy equalizer from $g$ to the point specified above is weakly
contractible, and we choose an element $(g,H)$. As a component of the 
path $H$, we get a homotopy $G$ from the composite map $\cT\to
THH(R)\to \cR$ to $g$.  We write $g^{*}\uR$ for $\uR$ with the
$\bT$-equivariant commutative $\uT$-algebra structure from the map of $\bT$-equivariant
commutative ring orthogonal spectra $g$, and we use $G$ to give $\uR$
a $\bT$-equivariant $\uT\otimes I$-module structure.  We then have a
zigzag of weak equivalences
\[
THH(A)\sma^{\bL}_{THH(R)}\uR\overfrom{\simeq}
THH(A)\sma^{\bL}_{\uT}\uR\overto{\simeq}
THH(A)\sma^{\bL}_{\uT\otimes I}\uR\overfrom{\simeq}
THH(A)\sma^{\bL}_{\uT}g^{*}\uR.
\]
Let $\cM$ be a cofibrant approximation of $g^{*}\cR$ in the model structure on (pre)cyclotomic $\cT$-modules of \cite[III.2.5]{BM-cycl23}.
Then $(-)\sma_{\uT} \uM$ represents the derived functor
$(-)\sma^{\bL}_{\uT}g^{*}\uR$ and $THH(-)\sma_{\uT}\uM$ is another point-set
model for $\LTHH^{\cR}(-)$. 

By construction, $\uT$ is in the class of objects $\EQC$ discussed in
Section~\ref{sec:equivariant} on which $\Phi$ and the smash product
behave well.  By~\cite[III.1.8]{BM-cycl23}, for any $\bT$-equivariant $\uT$-module $\uN$,
the canonical map  
\[
\Phi \uN\sma_{\Phi \uT}\Phi \uM\to \Phi (\uN\sma_{\uT}\uM)
\]
is an isomorphism.  For $\uN=THH(A)$, using the precyclotomic structure
maps on $THH(A)$, $\cT$, and $\cM$, we get a precyclotomic structure map
\[
\Phi (THH(A)\sma_{\uT}\uM)\iso \Phi THH(A)\sma_{\Phi \uT}\Phi \uM\to
THH(A)\sma_{\uT} \uM.
\]
For any family $\aF$ of proper subgroups of $\bT$, the composite map 
\[
\bL\Phi (THH(A)\sma_{\uT}\uM)\to THH(A)\sma_{\uT} \uM
\]
is an $\aF$-equivalence whenever the precyclotomic structure map of $\cM$ is.
\end{proof}

As a consequence, we can define relative topological cyclic homology
in this context.  Since we are concentrating on $p$-(pre)cyclotomic
spectra, the relevant version of topological cyclic homology is
$p$-typical $TC$, denoted as $TC(-;p)$ in~\cite{BM-cycl} (and elsewhere).  To avoid
ambiguity between $TC$ of a precyclotomic object and $TC$ of a ring
spectrum, we use the following notation.

\begin{notn}
We write $TC_{\cyc}$ for the composite functor $TC(R(-);p)$ from
precyclotomic spectra to spectra, where $TC(-;p)$ is the $p$-typical
$TC$-construction of~\cite[6.3]{BM-cycl} (for example) and $R$ is a
fibrant approximation functor in the category of precyclotomic
spectra.  We write $TC$ for the functor $TC_{\cyc}(THH(-))$ from
associative or commutative ring orthogonal spectra to orthogonal
spectra.
\end{notn}

The construction of $TC(-;p)$ in the previous paragraph uses the maps between categorical fixed points
\[
\mathrm{R},\mathrm{F}\colon \uX^{C_{p^{n+1}}}\to \uX^{C_{p^{n}}}
\]
where $\mathrm{F}$ is the inclusion and $\mathrm{R}$ is the composite
of the canonical map
\[
\uX^{C_{p^{n+1}}}\iso (\rho^{*}(\uX^{C_{p}}))^{C_{p^{n}}}\to (\Phi
\uX)^{C_{p^{n}}}\to \uX^{C_{p^{n}}}
\]
where the middle map is the map of~\cite[4.4]{MM} from the categorical
fixed points to the geometric fixed points and the last map is the
$C_{p^{n}}$ fixed points of the precyclotomic structure map $r_{\cX}$.
We note that the construction only requires a precyclotomic structure
and not a cyclotomic structure.  For the homotopically correct
construction of $p$-typical $TC$, we need the homotopically correct categorical
fixed points, which we ensure by fibrant approximation of $\uX$ in the
category of precyclotomic spectra in the definition of $TC_{\cyc}$.

In the case when the underlying $\bT$-equivariant spectrum $\uX$ is
$p$-complete (i.e., all of the categorical fixed point spectra are
$p$-complete as non-equivariant spectra), $TC_{\cyc}(\cX)$ is just the derived
mapping spectrum of maps out of the sphere spectrum in the category of
(pre)cyclotomic spectra \cite[6.8]{BM-cycl}:
\[
TC_{\cyc}(\cX) \iso \bR F^\PreCyc(\bS,\cX).
\]

Returning to the case of relative $THH$, we use the following notation
for $TC$ of these (pre)cyclotomic spectra.

\begin{notn}
Let $\cR$ be a (pre)cyclotomic base.  Write $TC^{\cR}$ for derived
$TC_{\cyc}$ of derived $THH^{\uR}$ with the (pre)cyclotomic structure of
Theorem~\ref{thm:precycstructure}. 
\end{notn}

While this adequately constructs a derived functor $TC^{\cR}$ (from the homotopy
category of $R$-algebras to the stable category), the constructions in
Section~\ref{sec:descent} require a point-set model that is functorial
in $\cR$ as well as $A$.  We describe such a model in
Section~\ref{sec:TCfunc}. 

The construction in Theorem~\ref{thm:precycstructure} of the
precyclotomic structure depended on the choice of homotopy in the
precyclotomic base structure on $\cR$.  We observe that homotopic
homotopies construct weakly equivalent precyclotomic structures.  In
the following proposition, let $D$ denote the $2$-disk obtained as the
(unreduced) cone on two copies of $[0,1]$ with corresponding endpoints
identified. 

\begin{prop}\label{prop:disk}
Let $\cR'$, $\cR''$ be precyclotomic bases with the same underlying
commutative ring precyclotomic spectrum $\cR$.  A map of
commutative ring orthogonal spectra $R\otimes D\to R$ which
restricts on each copy of $[0,1]$ to the homotopy intrinsic to the
precyclotomic base structures on $\cR'$ and $\cR''$ induces a natural
isomorphism in the homotopy category of precyclotomic spectra 
\[
\bL THH^{\cR'}(-)\simeq \bL THH^{\cR''}(-).
\]
\end{prop}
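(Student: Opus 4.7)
The plan is to interpret the 2-disk data as a path in the relevant derived mapping space, lift that path to an honest homotopy between the two choices made in constructing $\bL THH^{\cR'}$ and $\bL THH^{\cR''}$, and then use the lift to produce a zigzag of precyclotomic weak equivalences.

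By Proposition~\ref{prop:dmapTHH}, the derived mapping space $\bR\aCom^{\PreCyc}(THH(R),\cR)$ is modeled by the homotopy equalizer of $\id$ and $\Psi_{*}$ on $\aCom(R,R)$. A (pre)cyclotomic base structure on $\cR$ is precisely a point in this homotopy equalizer lifting the canonical map $THH(R)\to \cR$, so $\cR'$ and $\cR''$ specify two such points. The map $R\otimes D\to R$ in the hypothesis is adjoint to a 2-cell in $\aCom(R,R)$ whose boundary is the loop formed by the two base homotopies; this is precisely the data of a path in the homotopy equalizer from the point specified by $\cR'$ to the one specified by $\cR''$.

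Following the proof of Theorem~\ref{thm:precycstructure}, choose a cofibrant approximation $\cT\to THH(R)$ in commutative ring precyclotomic spectra and then choose lifts $(g',H')$, $(g'',H'')$ used there to construct $\bL THH^{\cR'}$ and $\bL THH^{\cR''}$. Since the space of such lifts is weakly contractible (applied one dimension higher now that we have a path of target points, not a single target point), the path of the previous paragraph lifts to a map of commutative ring precyclotomic spectra $\tilde g\colon \cT\otimes I\to \cR$ from $g'$ to $g''$, together with a compatible interpolation between the homotopies $G'$ and $G''$. Combined, these upgrade $\uR$ to a $\uT\otimes (I\times I)$-module whose restrictions to the two ends give back the $\uT\otimes I$-module structures used in the two respective constructions. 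After cofibrant approximation of $\tilde g^{*}\cR$ as a precyclotomic $\cT\otimes I$-module, we obtain the zigzag
\[
THH(A)\sma^{\bL}_{\uT}(g')^{*}\uR \overto{\simeq} THH(A)\sma^{\bL}_{\uT\otimes I}\tilde g^{*}\uR \overfrom{\simeq} THH(A)\sma^{\bL}_{\uT}(g'')^{*}\uR
\]
of precyclotomic spectra, the outer terms of which represent $\bL THH^{\cR'}(A)$ and $\bL THH^{\cR''}(A)$. That the middle term carries a precyclotomic structure and that the maps respect it follows exactly as in the last paragraph of the proof of Theorem~\ref{thm:precycstructure}, using that $\cT\otimes I$ lies in the class $\EQC$ so that $\Phi$ commutes with the relevant relative smash product; naturality in $A$ is automatic from the construction.

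The main obstacle is the lifting step. One must promote Proposition~\ref{prop:mappingspace} from a statement about points of a mapping space to one about paths of points, and then exhibit the 2-disk lift together with a parallel lift of the $\uT$-module interpolation datum. This is really the weak contractibility of the choice space in the proof of Theorem~\ref{thm:precycstructure} applied to a path-space, combined with careful model-categorical bookkeeping (the cylinder $\cT\otimes I$ in commutative ring precyclotomic spectra, and cofibrant replacement of $\tilde g^{*}\cR$ as a $\cT\otimes I$-module) to ensure the zigzag is actually a zigzag of weak equivalences of precyclotomic spectra rather than just an abstract identification in the homotopy category.
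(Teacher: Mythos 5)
The paper states Proposition~\ref{prop:disk} without any proof whatsoever---the section ends immediately after the statement---so there is no argument in the paper to compare against. That said, your proposal is the natural one, and it is essentially correct.

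Your reading of the disk datum as a path in the homotopy equalizer of $\id$ and $\Psi_{*}$ on $\aCom(R,R)$ (lying over $\id_R$), and hence as a path in $\bR\aCom^{\PreCyc}(THH(R),\cR)$, is the right translation. The lifting step is also correctly identified as the crux: Proposition~\ref{prop:mappingspace} gives that the strict-equalizer-to-homotopy-equalizer map is a weak equivalence when $\uT$ is cofibrant and $\uR$ is fibrant, and since it is also a fibration, it is an acyclic fibration; lifting along $\{0,1\}\hookrightarrow I$ (a cofibration) then produces the desired $\tilde g\colon\cT\otimes I\to\cR$ together with an interpolating square of homotopies. This is exactly "the contractibility of the choice space, applied one dimension higher," as you say. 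From there, giving $\uR$ a $\uT\otimes (I\times I)$-module structure and smashing down is the right mechanism for interpolating the two point-set models.

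Two small points of hygiene worth tightening. First, the zigzag you write with the symbol $\sma^{\bL}$ should really be a zigzag of point-set precyclotomic spectra: take $\cM$ a cofibrant replacement of $\tilde g^{*}\cR$ in precyclotomic $\cT\otimes I$-modules, and let $\cM'$, $\cM''$ be cofibrant replacements of $(g')^{*}\cR$, $(g'')^{*}\cR$ in precyclotomic $\cT$-modules mapping over the endpoint restrictions of $\cM$; then the zigzag goes through the maps $THH(A)\sma_{\uT}\uM'\to THH(A)\sma_{\uT\otimes I}\uM\from THH(A)\sma_{\uT}\uM''$. Second, for the outer maps to be maps of precyclotomic spectra you are implicitly using that $\cT\hookrightarrow\cT\otimes I$ (at either endpoint) is a map of commutative ring precyclotomic spectra along which extension of scalars is compatible with $\Phi$ and with the precyclotomic structure maps, which again uses that $\cT$ and $\cT\otimes I$ land in $\EQC$. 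You flag this, but it is the place where the remaining work lives. Modulo that bookkeeping, your argument is the proof the authors presumably had in mind.
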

 
\section{Precyclotomic bases, equivariant factorization homology, and
global commutative ring spectra with multiplicative
deflations}\label{sec:facthom}

The purpose of this section is to give a conceptual explanation of the
motivation behind the definition of a precyclotomic base. The
discussion is purely motivational and should not be regarded as
rigorously justified.  The material here was inspired by an October
2021 talk given
by Asaf Horev at MIT.  Horev discussed the structure
of equivariant factorization homology and its relationship to the
cyclotomic structure on $THH$. Extending these observations from
spectra to categories of
$R$-modules and examining the required structure on $R$ leads to the framework of precyclotomic bases, as
we explain below.  Abstracting the categorical framework using ideas
of Bachmann-Hoyois~\cite[\S9]{BachmannHoyois} (see also the third
author's work in~\cite[\S2]{Yuan-Frob}) gives rise to the notion of global
commutative ring spectrum with multiplicative deflations
(Definition~\ref{defn:global}).  This structure arises in nature on
the global equivariant Thom spectrum $\uMUP$ and on a new global
equivariant structure on $MU$, motivating the key examples of precyclotomic
bases that we discuss in more detail in the next section.

As we progress in this section, we will require $\infty$-categorical
constructions, and statements should be read in $\infty$-categorical
terms.  In particular, functors in this section should be read 
in their homotopical rather than point-set forms.

We begin by recalling the relevant parts of the standard structure of
genuine equivariant factorization homology.  Let $G$ be a finite group
and for simplicity, let $\underline{A}$ be a genuine $G$-equivariant
commutative ring orthogonal spectrum with underlying (non-equivariant)
commutative ring orthogonal spectrum $A$.  Then equivariant
factorization homology associates to each $G$-manifold $M$ a genuine
$G$-equivariant orthogonal spectrum $\int_M \underline{A}$ with
the following features (among others):
\begin{enumerate}
    \item\label{efhnorm} When $M = G/H$ is a transitive $G$-set, there is a canonical
    equivalence $\int_M \underline{A} \simeq N_H^G \res^G_H \underline{A}$.
    In particular, $\int_{G/e} \underline{A} \simeq N^G_{e} A$ depends only
    on the underlying non-equivariant commutative ring $A$.  
    \item\label{efhfree} When $M$ has a free action of $G$, there is a natural equivalence 
    \[
    \Phi^G \int_M \underline{A} \simeq \int_{M/G} A.
    \]
\end{enumerate}
(In the more general case when $\underline{A}$ is some kind of
equivariant disk algebra, representation tubes of the form
$G\times_{H}V$ should replace orbits in~(\ref{efhnorm}).)

Two remarks are in order regarding the features above: first,
in the special case $M = G/e$, the equivalence in~(\ref{efhfree})
extends the diagonal 
identity $\Phi^G N^G_{e} A \simeq A$, which is the case $M=G$.  Second, 
the observation about restriction in~(\ref{efhnorm}) generalizes to~(\ref{efhfree}):
when $M$ is free, $\int_{M}\underline{A}$ depends only on the
non-equivariant commutative ring orthogonal spectrum $A$. Indeed,
features~(\ref{efhnorm}) and~(\ref{efhfree}) are closely related, 
with~(\ref{efhfree}) deriving from~(\ref{efhnorm}).

The above features relate to the cyclotomic structure on $THH$ in the
following way.  We take the manifold $M$ to be $\bT$ and $G$ to be a
finite subgroup $C_{n}$ of $\bT$.  The genuine $C_{n}$-equivariant
orthogonal spectrum $\int_{\bT} \underline{A}$
comes with a compatible $\bT$-action extending the inherent
$C_{n}$-action.  As the subgroups $C_{n}$ vary, they fit together to
produce a genuine $\bT$-equivariant $\aF_{\fin}$-colocal orthogonal
spectrum.  Since $\bT$ is a free $C_{n}$-manifold, we also have
(non-equivariant) equivalences 
\[
\Phi^{C_{n}}\int_{\bT}A \simeq \int_{\bT/C_{n}} A,
\]
natural in $C_{n}$-equivariant self-maps of $\bT$, and in
particular natural in the $\bT/C_{n}$-action on both sides.
Applying the Borel equivariant version of
$\rho_{n}^{*}$ for $\rho_{n}\colon \bT\iso \bT/C_{n}$ the $n$th root map,
we can view the above map as a Borel equivalence of $\bT$-spectra
\[
\rho_{n}^{*}\Phi^{C_{n}}\int_{\bT}A \simeq
\rho_{n}^{*}\int_{\bT/C_{n}} A \iso \int_{\bT} A.
\]
Looking at all the $C_{n}$ together (assuming the equivalences are
appropriately compatible with inclusions of subgroups), we deduce a
genuine $\bT$-equivariant $\aF_{\fin}$-equivalence
\[
\Phi \int_{\bT}A \simeq \int_{\bT}A.
\]
Thus, the cyclotomic structure on $THH$ is a consequence of the basic
features of equivariant factorization homology.

In order to apply this observation to $THH$ relative to a commutative
ring orthogonal spectrum $R$, we would replace (equivariant) spectra
with (equivariant) $R$-modules for some equivariant structure $\uR$ on
$R$. To start, we need the category of $R$-modules to admit norm functors
${}_{\uR}N_{e}^{G}$ from (non-equivariant) $R$-modules to
$G$-equivariant $\uR$-modules, and geometric fixed point functors
${}_{\uR}\Phi^{G}$ from $G$-equivariant $\uR$-modules to (non-equivariant)
$R$-modules, related by a natural equivalence
\[
\Id \overto{\simeq}{}_{\uR}\Phi^{G}{}_{\uR}N_{e}^{G}, 
\]
which is symmetric monoidal and appropriately compatible with
restrictions to subgroups (as encapsulated below).

Applying the norm $N_{e}^{G}$ in spectra to an $R$-module $X$ 
naturally yields an $N_{e}^{G}R$-module $N_{e}^{G}X$.  To convert this to an
$\uR$-module, a norm multiplication $N_{e}^{G}R\to \uR$ suffices, and
we can set
\[
{}_{\uR}N_{e}^{G}X:=N_{e}^{G}X\sma_{N_{e}^{G}R}\uR.
\]
If we also assume these norm multiplications satisfy the usual
compatibilities, by~\cite[6.11]{BlumbergHill-NormsTransfers} the
resulting structure essentially amounts to a genuine $G$-equivariant
commutative ring orthogonal spectrum structure on $\uR$ with
underlying non-equivariant commutative ring orthogonal spectrum $R$.

Analogously, applying the geometric fixed point functor $\Phi^G$ in
spectra to an $\uR$-module $\uX$ naturally yields a $\Phi^G \uR$-module
$\Phi^G \uX$. To convert this to an $R$-module, it suffices to have a
map of commutative ring orthogonal spectra $\Phi^{G}\uR\to R$.  That is, we define
\[
{}_{\uR} \Phi^G \uX := \Phi^{G} \uX \sma_{\Phi^{G} \uR} R.
\]
We say
more below about making these fit together along restriction maps,
which leads to equivariance considerations, but for now we note that
for $G=C_{p}$, this is the underlying non-equivariant map of a
precyclotomic structure map.

Now consider the existence of a diagonal map $X\overto{\simeq}
{}_{\uR}\Phi^{G}{}_{\uR}N_{e}^{G}X$.  By definition, the target
functor is the composite
\[
{}_{\uR}\Phi^{G}{}_{\uR}N_{e}^{G} X = 
\Phi^{G}(N_{e}^{G}X\sma_{N_{e}^{G}R}\uR)\sma_{\Phi^{G}\uR}R
   \simeq 
(\Phi^{G}N_{e}^{G}X)\sma_{\Phi^{G}N_{e}^{G}R}
   \Phi^{G}\uR\sma_{\Phi^{G}\uR}R.
\]
Using the diagonal equivalence
for $\Phi^{G}$ and $N_{e}^{G}$ in spectra, this functor is naturally equivalent
to the extension of scalars $X\sma_{R}R$ for the self-map of $R$ given
by the composite 
\[
\Psi \colon R\overto{\simeq}\Phi^{G}N_{e}^{G}R\to \Phi^{G}\uR\to R.
\]
An identification of the composite as the identity in the
$\infty$-category of commutative ring orthogonal spectra then
constructs a natural diagonal equivalence
\[
X\overto{\simeq} {}_{\uR}\Phi^{G}{}_{\uR}N_{e}^{G}X
\]
for $R$-modules.  For $G=C_{p}$, the operation $\Psi$ is precisely the
operation in the definition of precyclotomic base, and (for $\uR$ a
precyclotomic spectrum) the identification in the $\infty$-category is
essentially a choice of homotopy that gives the structure of a
precyclotomic base.

All this was a discussion of norms functors and geometric fixed point
functors in $R$-modules; we now turn to equivariant factorization
homology.  As in our simplification in the discussion of the features
of equivariant factorization homology in spectra, we restrict to the
commutative case.  In this case, equivariant
factorization homology extends to a functor on all $G$-spaces given by
prolongation of norms.  To make this work, extending norms to all
finite $G$-sets by smash product, we need norms to be functorial in
maps of $G$-sets. Assuming enough structure on $\uR$ (including at
least the structure above, more about which below), we can do this for
commutative $\uR$-algebras: for a commutative $\uR$-algebra~$\underline{A}$, 
\[
G/H_{1} \amalg \dotsb \amalg G/H_{n}\mapsto 
({}_{\uR}N_{H_{1}}^{G}\res_{H_{1}}^{G}\underline{A})
  \sma_{\uR}\dotsb\sma_{\uR}
({}_{\uR}N_{H_{n}}^{G}\res_{H_{n}}^{G}\underline{A})
\]
extends to a functor ${}_{\uR}N\underline{A}$ from finite $G$-sets to
$\uR$-modules (or commutative $\uR$-algebras).
We then define 
\[
M\mapsto \int^{\uR}_{M}\underline A
\]
to be the functor (in $M$) from $G$-spaces to $\uR$-modules (or commutative
$\uR$-algebras) given by the left Kan extension of
${}_{\uR}N\underline{A}$ under the inclusion of finite $G$-sets in all
$G$-spaces.  (Because disjoint unions of $G$-sets go to coproducts of
commutative $\uR$-algebras, using commutative $\uR$-algebras as the
target and then forgetting to $\uR$-modules gives a naturally
equivalent functor.)  The diagonal equivalence $A\overto{\simeq}
{}_{\uR}\Phi^{G}{}_{\uR}N_{e}^{G}A$ extends to a natural diagonal equivalence
\[
{}_{\uR}N\underline{A}(S/G)\overto{\simeq}{}_{\uR}
\Phi^{G}({}_{\uR}N\underline{A}(S))
\]
for free $G$-sets $S$ and prolongs to a natural diagonal equivalence
\[
\int^{\uR}_{M/G}\underline{A}\overto{\simeq}
{}_{\uR}\Phi^{G} \bigg(\int^{\uR}_{M}\underline{A}\bigg) 
\]
for free $G$-spaces $M$, with both $\int^{\uR}_{M/G}\underline{A}$ and
$\int^{\uR}_{M}\underline{A}$ depending only on the underlying
non-equivariant commutative $R$-algebra structure on $\underline{A}$.
In other words, this theory has the features~(\ref{efhnorm})
and~(\ref{efhfree}) of equivariant factorization homology discussed
above.  In the case when $M=\bT$ and we let $G$ range over the finite
subgroups $C_{n}<\bT$ as above, with some good will and an extension
of the structure of $\uR$ to $\bT$-equivariant commutative orthogonal
ring spectra, the observations above on $\int_{\bT}$ now generalize to
$\int^{\uR}_{\bT}$: for a commutative $R$-algebra $A$, the
$\aF_{\fin}$-colocal $\bT$-equivariant $\uR$-module $\int^{\uR}_{\bT}A$
comes with an equivalence 
\[
\rho^{*}\left({}_{\uR}\Phi^{C_{p}}\int^{\uR}_{\bT}A\right)\simeq \int^{\uR}_{\bT}A.
\]
We note that this is not necessarily a cyclotomic structure map but
does induce a precyclotomic structure map
\[
\Phi \int^{\uR}_{\bT}A=\rho^{*}\left(\Phi^{C_{p}} \int^{\uR}_{\bT}A\right)\to
\rho^{*}\left({}_{\uR}\Phi^{C_{p}}\int^{\uR}_{\bT}A\right)\simeq \int^{\uR}_{\bT}A.
\]

The discussion above took a direct and streamlined approach to connect
the ideas behind equivariant factorization homology in the category of
$R$-modules and precyclotomic bases. One way to fill in some of the
missing structure is to use the formulation of global equivariant
$E_{\infty}$ ring spectra in \cite[\S 1.4, 9]{BachmannHoyois}
and~\cite[\S 2]{Yuan-Frob}.  Let $\Span(\Gpd)$
denote the $\infty$-category whose objects are finite groupoids (that
is, finite $\pi_1$ and finitely many components) $X$ and where
morphisms between $X$ and $Y$ are given by the space of spans
$X \leftarrow Z \rightarrow Y$, with composition of spans evidenced by
homotopy cartesian squares.

\begin{defn}
A (multiplicative) global equivariant context is a functor $\aC(-): \Span(\Gpd) \to \Cat_{\infty}$ which sends disjoint unions to products.  
\end{defn}

Although we will not give details, global equivariant stable homotopy
theory fits into this framework: there is a global equivariant context
$\underline{\Sp}$ where $\underline{\Sp}(BG)$ is equivalent to the
$\infty$-category of genuine $G$-equivariant orthogonal spectra.  We
note that every map in $\Span(\Gpd)$ is equivalent to a disjoint union
of composites of maps of the form $BG\from BH$, $BK\from BG$, $BH\to
BG$, and $BG\to BK$ for inclusions of subgroups $H\to G$ and quotient
maps of quotient groups $G\to K$.  These component pieces have
classical interpretations in equivariant stable homotopy theory:

\begin{notn}\label{notn:functoriality}
    Given a global equivariant context $\aC$ and a short exact sequence of groups
    $0 \to H \hookrightarrow G \twoheadrightarrow K \to 0$,  we denote:
\begin{align*}
_{\aC}\mathrm{res}^G_H := \aC(BG \leftarrow BH \rightarrow BH) &: \aC(BG) \to \aC(BH)\\
_{\aC}\mathrm{inf}_K^G := \aC(BK \leftarrow BG \rightarrow BG) &: \aC(BK) \to \aC(BG)\\
_{\aC}N_H^G := \aC(BH\leftarrow BH \rightarrow BG) &: \aC(BH) \to \aC(BG)\\
_{\aC}\Phi^H := \aC(BG \leftarrow BG \rightarrow BK) &: \aC(BG) \to \aC(BK).
\end{align*}
\end{notn}

For the example of global equivariant stable homotopy theory,
$\aC(BG)$ is the $\infty$-category $\Sp^G$ of $G$-spectra.  The
functor $\mathrm{res}^{G}_{H}$ is the usual restriction functor from
$G$-spectra to $H$-spectra of (for example) \cite[V.2.1]{MM}.  The
functor $\mathrm{inf}^{G}_{K}$ is the functor $\epsilon^{\#}$
of~\cite[II\S8]{LMS}, left adjoint to the $H$-fixed point functor
$\Sp^{G}\to \Sp^{K}$.  The functor $N_{H}^{G}$ is the
Hill-Hopkins-Ravenel norm functor of~\cite[A.52]{HHR}, and the functor
$\Phi^{H}$ is the geometric fixed point functor of (for example)
\cite[V.4.3]{MM}.  Although it is not (to our knowledge) fully in
the literature and we do not do the required technical work here, this
expands to an example of a global equivariant context.

\begin{prop}\label{prop:sp}
There is a global equivariant context $\underline{\Sp}(-)\colon \Span(\Gpd) \to \Cat_{\infty}$ which sends $BG$ to the $\infty$-category $\Sp^G$ of $G$-spectra and the functors of~\ref{notn:functoriality} to the corresponding functors in genuine equivariant stable homotopy theory.
\end{prop}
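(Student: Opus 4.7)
The plan is to invoke the technology of Bachmann--Hoyois~\cite[\S9]{BachmannHoyois}, which constructs global equivariant contexts from presentable stable $\infty$-categories equipped with suitable norm structures; the underlying model-categorical input is the global model structure on orthogonal spectra of Schwede. Concretely, I would proceed in three stages.

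First, I would fix the assignment on objects and the four basic morphism types. Set $\underline{\Sp}(BG) := \Sp^G$, the $\infty$-category obtained by localizing genuine $G$-equivariant orthogonal spectra at the stable equivalences. For the basic building blocks, assign the restriction map $BG\to BH$ to the usual restriction $\mathrm{res}^G_H \colon \Sp^G\to \Sp^H$; assign the quotient map $BG\to BK$ for $G\twoheadrightarrow K$ to the geometric fixed point functor $\Phi^H\colon \Sp^G\to \Sp^K$ (presented by the point-set functor of \cite[V.4.3]{MM} and then localized, or via Bachmann--Hoyois's intrinsic construction); assign the backward restriction $BH\to BG$ to the Hill--Hopkins--Ravenel norm $N_H^G\colon \Sp^H\to \Sp^G$; and assign the backward quotient $BK\to BG$ to inflation $\mathrm{inf}^G_K$ given by pullback along $G\twoheadrightarrow K$.

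Second, I would extend these assignments to all spans. Every span in $\Span(\Gpd)$ between classifying spaces factors (up to homotopy) as a composite of the four basic morphism types and disjoint unions, so the assignment is determined on morphisms; the remaining work is coherence. Here I would either (i) quote \cite[Thm.~9.4]{BachmannHoyois} directly, noting that orthogonal spectra satisfy their axioms (smooth/proper base change, a distributivity law between norms and colimits, and the ``ambidexterity'' type compatibility between restriction and norm embodied in the double-coset formula), or (ii) build the functor by hand via a nerve-type construction: take a quasicategorical model for $\Span(\Gpd)$ (e.g.\ Barwick's effective Burnside category applied to the $2$-category of finite groupoids), realize $\underline{\Sp}(-)$ as an \emph{unstraightened} cocartesian fibration using the Guillou--May $2$-categorical machinery of genuine equivariant spectra enhanced to incorporate norms, and then invoke Glasman's or Hopkins--Lurie's machinery to produce the desired functor of $\infty$-categories.

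Third, I would verify the product condition: $\Sp^{G\times H}\simeq \Sp^G\times \Sp^H$ when interpreted as $\underline{\Sp}(BG\amalg BH)\simeq \underline{\Sp}(BG)\times \underline{\Sp}(BH)$. This is immediate from the fact that the global equivariant category sends a disjoint union of classifying spaces to the product of the corresponding equivariant stable categories, as every spectrum indexed over a disjoint union decomposes uniquely.

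The main obstacle is the second step: the coherence data required to assemble restrictions, inflations, norms, and geometric fixed points into a single functor out of $\Span(\Gpd)$ is substantial, because the span category encodes compatibilities such as the double-coset formula $\mathrm{res}^G_K N^G_H \simeq \bigsqcup_{KgH} N^K_{K\cap {}^gH}\mathrm{res}^{{}^gH}_{K\cap {}^gH}{}^g(-)$ and the interaction of norms with geometric fixed points $\Phi^H N^G_H\simeq \Phi^H \circ \mathrm{res}^G_H$ after inflation. Rather than prove these piecewise and assemble them, the cleanest route is to delegate this coherence entirely to \cite[\S9]{BachmannHoyois}, where the span functoriality is packaged into a single theorem; the remaining verification then reduces to matching their abstract outputs with the classical constructions listed in Notation~\ref{notn:functoriality}, which is straightforward from the universal properties of each functor.
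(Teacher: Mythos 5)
The paper itself does not prove this proposition: it appears in a section that the authors explicitly declare to be ``purely motivational and should not be regarded as rigorously justified,'' and the statement is given without argument, with only a pointer to Bachmann--Hoyois~\cite[\S 1.4, 9]{BachmannHoyois} and~\cite[\S 2]{Yuan-Frob}. Your strategy of delegating the coherence to Bachmann--Hoyois is therefore the same move the paper implicitly makes, and your three-stage outline (objects and basic morphism types, coherence via BH, product condition) is a reasonable sketch of what a rigorous proof would look like.

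The one place you should be more careful is the step where you propose to ``quote \cite[Thm.~9.4]{BachmannHoyois} directly.'' The functoriality claimed here is over all of $\Span(\Gpd)$, in particular with \emph{forward} legs that are arbitrary maps of finite groupoids, not just finite covers. The covariant functoriality for non-covering maps $BG\to BK$ is exactly the deflation/geometric fixed point functor $\Phi^H$, and this is precisely the extra structure singled out in Definition~\ref{defn:global} as ``multiplicative deflations.'' Bachmann--Hoyois's normed machinery is organized around spans whose forward legs are covers (the norm direction); one must check that their results actually produce the covariant functoriality for quotient maps as well, with the expected base-change compatibilities against restriction and norm (for instance, that a pullback square of groupoids with one quotient leg and one covering leg yields the compatibility $\Phi^H N_H^G \simeq \mathrm{res}$ after inflation, not just the cover-cover double coset formula). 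If this is not directly available from BH, the paper's parallel citation of~\cite{Yuan-Frob} is probably where it is filled in, and your proof would need to lean on that source too. Separately, a small slip: the multiplicative double-coset formula you write should use a smash product (or $\bigotimes$) over the double cosets rather than $\bigsqcup$; the disjoint union is the additive version, for transfers rather than norms.
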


As the full subcategory $\Span(\Fin) \subset \Span(\Gpd)$ spanned by
the finite sets is the free symmetric monoidal $\infty$-category on a
point, the restriction of any global equivariant context $\aC$ to
$\Span(\Fin)$ exhibits $\aC(*)$ as a symmetric monoidal
$\infty$-category.  More generally, each of the $\aC(X)$ is endowed
with a natural symmetric monoidal structure, and the functors
of~\ref{notn:functoriality} are naturally symmetric monoidal.   

\begin{defn}\label{defn:global}
Let $\Xi\colon \widetilde{\Sp}\to \Span(\Gpd)$ denote the
coCartesian fibration corresponding to the multiplicative global
equivariant context $\underline{\Sp}$ of Proposition~\ref{prop:sp} and let
$\mathfrak{res}\subset \mathrm{Mor}(\Span(\Gpd))$ be the subset of
spans of the form 
\[
X \xleftarrow{f} Y \xrightarrow{=} Y
\]
where $f$ is
a finite cover of the classifying spaces (up to weak equivalence).  Then a global
commutative ring spectrum with multiplicative deflations is a section of $\Xi$ which is coCartesian
over $\mathfrak{res}$. 
\end{defn}

\begin{rem}
The notion of a global commutative ring spectrum with multiplicative
deflations is closely related to notions appearing in work of
Schwede~\cite{Schwede-Global} (where it might be called an
ultracommutative monoid with multiplicative deflations),
Bachmann-Hoyois~\cite{BachmannHoyois}, and the third
author~\cite{Yuan-Frob}.
\end{rem}

Let $\underline{R}\colon \Span(\Gpd) \to \widetilde{\Sp}$  be a global
commutative ring spectrum with multiplicative deflations. Then we have
$\underline{R}(BG) \in \Sp^G$, as $\Sp^G$ is the fiber of $\Xi$ over
$BG$.  We think of $\underline{R}(BG)$ as the underlying
$G$-equivariant commutative ring spectrum of $\underline{R}$. It
admits natural maps (with reference to \ref{notn:functoriality}):
\begin{align*}
\res^G_H \underline{R}(BG) &\simeq \underline{R}(BH)\\
\mathrm{inf}_K^G\underline{R}(BK) &\to \underline{R}(BG) \\
N_H^G \underline{R}(BH) &\to \underline{R}(BG)\\
\Phi^H \underline{R}(BG) &\to \underline{R}(BK).
\end{align*}
In particular, looking at the last two maps with $H$ and $K$ the trivial group
(using the trivial group as both a subgroup and quotient group of
$G$), $\underline{R}$ has the structure maps needed in the
factorization homology discussion.  The identity
\[
(BG \leftarrow BG \rightarrow *) \circ (* \leftarrow * \rightarrow BG)
= (* \leftarrow * \rightarrow *)
\]
shows that the composition in $\Sp=\aC(*)$
\[
\Phi^{G}N_{e}^{G}\underline{R}(*)\to \Phi^{G}\underline{R}(BG)\to \underline{R}(*)
\]
is equivalent to the identity map $\underline{R}(*)=\underline{R}(*)$,
which is the condition on these maps that we identified above in the
factorization homology discussion and which formed the underlying
condition in the definition of precyclotomic base.

We note that treatment of the equivariance of geometric fixed points
requires consideration of quotient groups for general $G$; for finite
subgroups of $\bT$, the root isomorphisms $\rho_{n}\colon \bT\iso
\bT/C_{n}$ provide a way around this.  For $G=C_{pn}$, the structure
on $\underline{R}$ gives maps in $\Sp^{C_{pn}/C_{p}}$,
\[
\Phi^{C_{p}}\underline{R}(BC_{pn})\to \underline{R}(B(C_{pn}/C_{p}))
\]
which forget under restriction to the map 
\[
\Phi^{C_{p}}\underline{R}(BC_{p})\to \underline{R}(*).
\]
Recalling that $\rho^{*}$ denotes the functor $\Sp^{C_{pn}/C_{p}}\overto{\simeq}
\Sp^{C_{n}}$ coming from the $p$th root isomorphism $\rho \colon
C_{n}\iso C_{pn}/C_{p}$ and the convention $\Phi
=\rho^{*}\Phi^{C_{p}}$, the previous maps take the form 
\[
\Phi \underline{R}(BC_{pn})\to \underline{R}(BC_{n})
\]
in $\Sp^{C_{n}}$, and they are compatible under restriction with the system of finite subgroups $C_{n}$ of $\bT$.

When $\underline{R}$ is a global commutative ring spectrum with
multiplicative deflations, then the above discussion suggests that
there should be a global equivariant context $\underline{\Mod}_{\underline{R}}(-)$ with
\[
\underline{\Mod}_{\underline{R}}(BG) \simeq \Mod_{\underline{R}_G}(\Sp^G),
\]
restriction given by the forgetful functors, the norm functor
given by ${}_{R}N_{H}^{G}$, and the geometric fixed point functor
given by ${}_{R}\Phi^{H}$.  Then, one should be able to define
equivariant factorization homology relative to $\underline{R}$ as
equivariant factorization homology internal to the global
equivariant context $\underline{\Mod}_{\underline{R}}(-)$, at least in
the simplified form discussed above, i.e., in terms of prolonging a
norm functor from finite $G$-sets to all $G$-spaces.

In general, one does not expect naturally occurring global ring spectra
$\underline{R}$ to admit ``multiplicative deflation'' maps
$\Phi^{G}\underline{R} \to \underline{R}$.  For instance, for global
equivariant $K$-theory, we have that $p$ is invertible in
$\Phi^{C_p}\underline{KU}$ but not in $KU$, so it cannot admit such
structure.  Nevertheless, in addition to the sphere spectrum $\bS$, we
have the following example of interest, which we discuss in more
detail in the following section. 

\begin{example}
The complex cobordism ring spectrum $MU$ and its periodic variant
$MUP$ extend to global commutative ring spectra with multiplicative
deflations.  To see this, let $\Vect_{\mathbb{C}}$ denote the
topological category of finite dimensional complex vector spaces and
consider the functor  
\begin{equation*}\label{eqn:ku}
\underline{ku}\colon \Span(\Gpd) \to \Sp
\end{equation*}
which sends $X$ to the underlying spectrum of the symmetric monoidal
topological category $\Fun(X,\Vect_{\mathbb{C}})$ and sends a span
$X \xleftarrow{f} Z\xrightarrow{g} Y$ to the functor given by
restriction along $f$ followed by left Kan extension along $g$.  There
should be a natural global $J$-homomorphism  
\[
\underline{ku} \to \underline{\mathrm{pic}}(\underline{\Sp})
\]
(essentially given by sending $V\in \Fun(X,\Vect_{\mathbb{C}})$ to the
invertible equivariant spectrum $S^V$), and the global Thom spectrum
of this would be a global commutative ring spectrum with
multiplicative deflations lifting the usual global equivariant
structure on $\uMUP$.  One can also expect to obtain $MU$ as the
underlying commutative ring spectrum of a global commutative ring
spectrum with multiplicative deflations using a variant of this
procedure, by taking the global Thom spectrum of $\underline{bu} =
\tau_{\geq 2} \underline{ku}$ instead of $\underline{ku}$.  We denote
this global equivariant spectrum as $\uMUC$.  We explain these
examples carefully in Example~\ref{ex:MU}, but we note here that
$\uMUC$ is not the usual global equivariant structure on $MU$ for
homotopical equivariant cobordism nor for geometric equivariant cobordism.
This can be seen, for example, by looking at $C_{p}$ geometric fixed
points.  The $C_{p}$ geometric fixed points of an equivariant Thom
spectrum is the Thom spectrum on the $C_{p}$ fixed points of the base.
$\uMUC$ as constructed here, homotopical equivariant cobordism,
and geometric equivariant cobordism are equivariant Thom 
spectra for three equivariant structures on $BU$; these three
structures have different $C_{p}$ fixed points. 
\end{example}

\section{Examples}\label{sec:examples}

In this section, we observe that many examples of interest fit into
our framework.  The original example of a cyclotomic base is the
sphere spectrum, where the operation $\Psi$ is the identity map on the
point-set level.  Work of Bhatt-Morrow-Scholze~\cite[\S 11.1]{BMS2} on
relative $TC$ suggests that the spherical monoid ring
$\bS[t]=\Sigma^{\infty}_{+}\bN$ should have such a structure.  The
other examples we discuss were not previously known.

\numberwithin{equation}{subsection}
\subsection{Example: $\bS[G]$ and $\bS[\bN]$}\quad\par\medskip

Let $G$ be an abelian group.  We consider the opposite constant
$\bT$-Mackey functor on $G$, which we denote as $\uG$: specifically,
for a closed subgroup $H<\bT$, $\uG(\bT/H)=G$, and for a pair of
closed subgroups $K<H<\bT$, the transfer $\uG(\bT/K)\to \uG(\bT/H)$ is
the identity, and the restriction $\uG(\bT/H)\to \uG(\bT/K)$ is
multiplication by $\chi(H/K)$, that is, multiplication by the index if
finite and zero if not.  Associated to $\uG$ is the (genuine)
$\bT$-equivariant Eilenberg-Mac\,Lane spectrum $H\uG$, and for any
model, the derived zeroth space $\Omega^{\infty}H\uG$ has a canonical
genuine $\bT$-equivariant $E_{\infty}$ space structure, or more concisely, an
$E^{\bT}_{\infty}$ space structure.  Fixing a model,
$\Omega^{\infty}H\uG$, the underlying non-equivariant grouplike
$E_{\infty}$ space is equivalent to $G$, and we regard
$\Omega^{\infty}H\uG$ as a $\bT$-equivariant version of $G$.

We see from the concrete description of $\uG$ that the $\bT$-Mackey
functor $\upi_{0}(\rho^{*}((H\uG)^{C_{p}}))$ is again $\uG$.  We then get
a weak equivalence 
\[
\rho^{*}((H\uG)^{C_{p}})\to H\uG.
\]
Applying the derived zeroth space functor, we get an
$E_{\infty}^{\bT}$ space map
\begin{equation}\label{eq:HuGr}
\rho^{*}((\Omega^{\infty}H\uG)^{C_{p}})\to \Omega^{\infty}H\uG
\end{equation}
that is a weak equivalence. 

Let $\uR=\bS[\Omega^{\infty}H\uG]$, the equivariant spherical group ring
on $\Omega^{\infty}H\uG$: as a genuine $\bT$-spectrum, $\uR$ is the
unreduced suspension spectrum on $\Omega^{\infty}H\uG$, 
\[
\uR=\Sigma^{\infty}_{+}(\Omega^{\infty}H\uG),
\]
but the $E^{\bT}_{\infty}$ space structure on $\Omega^{\infty}H\uG$
endows $\uR$ with the structure of an $E^{\bT}_{\infty}$ ring spectrum.
Using cofibrant replacement, we can find a weakly equivalent
$E^{\bT}_{\infty}$ ring spectrum which is a $\bT$-equivariant
commutative ring orthogonal spectrum.

\begin{notn}
Let $\bS_{\bT}[G]$ denote a $\bT$-equivariant commutative ring
orthogonal spectrum model of $\bS[\Omega^{\infty}H\uG]$.
\end{notn}

The point-set multiplicative geometric fixed point functor satisfies 
\[
\Phi^{C_{p}}\uR=\Phi^{C_{p}}(\Sigma^{\infty}_{+}(\Omega^{\infty}H\uG))
=\Sigma^{\infty}_{+}((\Omega^{\infty}H\uG)^{C_{p}}),
\]
and the map~\eqref{eq:HuGr} gives a map of $E^{\bT}_{\infty}$ ring
spectra
\[
r\colon \Phi \uR\to \uR,
\]
which we use as the cyclotomic structure map for $\cR$, giving a
cyclotomic structure map for $\bS_{\bT}[G]$, which is a map of
$\bT$-equivariant commutative ring orthogonal spectra.

\begin{thm}
$\bS_{\bT}[G]\simeq \bS[\Omega^{\infty}H\uG]$ with the cyclotomic structure above
admits the structure of a cyclotomic base.
\end{thm}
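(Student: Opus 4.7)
The plan is to exhibit a homotopy from the cyclotomic power operation $\Psi\colon R\to R$ to the identity in the homotopy category of commutative ring orthogonal spectra, which by Theorem~\ref{thm:precycstructure} and the definition of cyclotomic base then completes the construction. Here $R = \bS[G]$ is the underlying non-equivariant ring spectrum of $\bS_{\bT}[G] \simeq \bS[\Omega^{\infty}H\uG]$.

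I would begin by reducing everything to the $E_{\infty}$-space level. Since $\bT$ is connected and $\Omega^{\infty}H\uG$ is discrete (with $\pi_{0}=G$), each of the three maps in the definition of $\Psi$ is $\Sigma^{\infty}_{+}$ of an $E_{\infty}$-space map: the diagonal becomes the point-set identity on $\bS[G]$ under the canonical isomorphism $\Phi^{C_{p}}N_{e}^{C_{p}}\Sigma^{\infty}_{+}G \cong \Sigma^{\infty}_{+}G$; the map $\Phi^{C_{p}}$ of the norm counit is $\Sigma^{\infty}_{+}$ of the $C_{p}$-fixed-point map induced by $G^{\times p}\to \Omega^{\infty}H\uG$; and $r_{\cR}$ is by construction $\Sigma^{\infty}_{+}$ applied to the map~\eqref{eq:HuGr}. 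This writes $\Psi = \Sigma^{\infty}_{+}\psi$ for an explicit $E_{\infty}$-endomorphism $\psi$ of $\Omega^{\infty}H\uG$.

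Next, I would use the naturality of the construction of $\Psi$ in the $E^{\bT}_{\infty}$-ring spectrum variable to upgrade $\psi$ to an $E^{\bT}_{\infty}$-endomorphism of $\Omega^{\infty}H\uG$. Under the equivalence between $0$-truncated $E^{\bT}_{\infty}$-spaces and commutative ring $\bT$-Mackey functors, $\psi$ corresponds to an endomorphism $\alpha$ of $\uG$. Using the global-equivariant framework of Section~\ref{sec:facthom} (where cyclotomic power operations correspond to the identities coming from equivariant factorization homology over $\bT$), together with the tautological identification of $\pi_{0}\rho^{*}((H\uG)^{C_{p}})$ with $\uG$ that defines the cyclotomic structure map $r$, one identifies $\alpha$ as the identity endomorphism of $\uG$. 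Since $E^{\bT}_{\infty}$-endomorphisms of an equivariant Eilenberg-Mac\,Lane space form a discrete space (equivalent to the set of Mackey functor endomorphisms of $\uG$), $\psi$ is literally homotopic to the identity, producing the cyclotomic base structure.

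The main obstacle will be justifying the identification of $\alpha$ with the identity of $\uG$. A direct point-set calculation using the adjunction formula $\epsilon(\otimes_{g\in C_{p}} b_{g}) = \prod_{g\in C_{p}} g\cdot b_{g}$ for the counit $N_{e}^{C_{p}}i\uR\to \uR$ suggests that the middle piece of $\Psi$ acts on $\pi_{0}=G$ by the $p$-fold sum $g\mapsto pg$, giving $\alpha = \times p$ rather than the identity. The resolution, as foreshadowed in Section~\ref{sec:facthom}, is that for the $E^{\bT}_{\infty}$-space $\Omega^{\infty}H\uG$ arising from an (additive) Mackey functor, the equivariant multiplicative norm entering $\Psi$ coincides with the additive transfer of $\uG$; for the opposite constant Mackey functor all transfers are identities, which reconciles this with $\alpha = \id$. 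Verifying this reconciliation carefully on the point-set level — including the compatibility of the norm counit with the global-equivariant deflation map — is the technical heart of the proof, after which the trivial homotopy supplies the homotopy $\Psi\simeq \id$.
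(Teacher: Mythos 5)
Your proposal is essentially correct and lands on the same core argument the paper uses: the composite of the first two maps of $\Psi$ is $\bS[-]$ applied to the transfer $\Omega^{\infty}H\uG\to(\Omega^{\infty}H\uG)^{C_p}$, which is $\Omega^{\infty}$ of the Mackey-functor transfer $H\uG\to(H\uG)^{C_p}$, and this is the identity for the opposite constant Mackey functor under the identification defining the cyclotomic structure map. The paper's proof is terser and goes straight to this identification; it does not raise the ``$\times p$'' worry that you raise and then resolve. Your resolution is correct --- the relevant map is the additive transfer on the $E^{\bT}_{\infty}$-space $\Omega^{\infty}H\uG$, not the multiplicative transfer on the ring $\bS[\Omega^{\infty}H\uG]$ applied to $\pi_0$ --- but it's a detour the paper avoids by reading the composite off directly at the space level. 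Two smaller comments: (1) the appeal to Section~\ref{sec:facthom} can at best motivate the identification, since that section is explicitly flagged as non-rigorous; you correctly recognize this by saying the point-set reconciliation is ``the technical heart,'' and that point-set computation is exactly what the paper records. (2) Your observation that the space of $E^{\bT}_{\infty}$-endomorphisms of $\Omega^{\infty}H\uG$ is discrete (equivalent to $\mathrm{End}(\uG)$) is a nice way to upgrade ``same Mackey functor endomorphism'' to ``homotopic,'' and implicitly also supplies the (essentially unique, and in fact contractible space of) choice of homotopy required by the definition of cyclotomic base, which the paper defers to the remark after the theorem.
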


\begin{proof}
We need to show that the operation 
\[
\Psi \colon \bS_{\bT}[G]\to \bS_{\bT}[G],
\]
is the identity in the homotopy category of commutative ring
orthogonal spectra.  Recall that this operation is the composite of
the norm diagonal followed by multiplication 
\[
\bS_{\bT}[G]\to \Phi^{C_{p}}N_{e}^{C_{p}}\bS_{\bT}[G]\to \Phi^{C_{p}}\bS_{\bT}[G]
\]
and (the non-equivariant map underlying) the cyclotomic structure map
$\Phi \bS_{\bT}[G]\to \bS_{\bT}[G]$.  We can understand the displayed map in terms of the
spherical group ring $\bS[\Omega^{\infty}H\uG]$: it is just the induced
map on $\bS[-]$ of the transfer
\[
\Omega^{\infty}H\uG\to (\Omega^{\infty}H\uG)^{C_{p}},
\]
which is just $\Omega^{\infty}$ applied to the transfer
\[
H\uG\to (H\uG)^{C_{p}},
\]
which by definition of $\uG$ is homotopic to the identity, under the
identification $(H\uG)^{C_{p}}\simeq H\uG$ defining the cyclotomic
structure map.
\end{proof}

Technically the definition of cyclotomic base requires a choice of
point-set homotopy. When the model $\bS_{\bT}[G]$ is cofibrant and
fibrant in the model structure on commutative ring cyclotomic spectra
of~\cite[Thm.~D]{BM-cycl23}, the underlying (non-equivariant) commutative
ring orthogonal spectrum is cofibrant and fibrant in the model
structure of \cite[III.4.1]{BM-cycl23}
(by~\cite[III.1.3]{BM-cycl23}) and the argument above
constructs a contractible space of
homotopies.  Any choices produce weakly equivalent cyclotomic bases,
as per Proposition~\ref{prop:disk} in Section~\ref{sec:relcyc}.

We can calculate $TC$ of this example (up to $p$-completion), using
the original method of 
B\"okstedt-Hsiang-Madsen~\cite{BHM}.  The exposition of Madsen
in~\cite[4.4.3]{Madsen-Traces} simplifies the argument of~\cite{BHM}
and it is clear that it applies to any cyclotomic spectrum $\cX$ where
the map from the categorical to geometric fixed points $\uX^{C_{p}}\to
\Phi^{C_{p}}\uX$ is $\bT/C_{p}$-equivariantly split.
(Nikolaus-Scholze~\cite[IV.3.4]{NikolausScholze} make an analogous
observation in their setting, in terms of the notion of a Frobenius
lift.)  Madsen's splitting always
happens for suspension spectra.  In this case, the statement
specializes to the following.

\begin{prop}For $\bS_{\bT}[G]$ as above, $TC_{\cyc}(\bS_{\bT}[G])\phat$ fits
into a homotopy fiber square
\[
\xymatrix@-1pc{
TC_{\cyc}(\bS_{\bT}[G])\phat\ar[r]\ar[d] 
&((\Sigma \bS[G])_{h\bT})\phat \ar[d]^{\mathrm{tr}}\\
\bS[G]\phat\ar[r]_{{[p]}-\id} 
&\bS[G]^{\wedge}_p
}
\]
where $[p]$ denotes the map on $\bS[G]$ induced by the multiplication
by $p$ map (the $p$-power map) on the abelian group $G$.
\end{prop}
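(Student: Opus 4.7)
The plan is to apply the Bökstedt–Hsiang–Madsen computation of $TC$ in the simplified form of~\cite[4.4.3]{Madsen-Traces} recalled above. This applies to any cyclotomic spectrum $\cX$ for which the comparison map $\uX^{C_p}\to \Phi^{C_p}\uX$ admits a $\bT/C_p$-equivariant section. For $\bS_{\bT}[G] = \Sigma^{\infty}_{+}\Omega^{\infty}H\uG$, which is a suspension spectrum, the tom Dieck splitting supplies such a section: $(\Sigma^{\infty}_{+}Y)^{C_p}$ decomposes canonically as a wedge with $\Sigma^{\infty}_{+} Y^{C_p} = \Phi^{C_p}\Sigma^{\infty}_{+}Y$ as a summand, naturally in the residual $\bT/C_p$-action. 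The task therefore reduces to specializing Madsen's argument to our spectrum.

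Doing so, I would write $TC_{\cyc}(\bS_{\bT}[G])\phat$ as the homotopy equalizer of Restriction and Frobenius on the tower $\{\bS_{\bT}[G]^{C_{p^n}}\}_n$. Iterating the tom Dieck splitting, each term decomposes as an ``$e$-isotropy'' summand equivalent to $\bS[G]$ plus summands indexed by the nontrivial subgroups of $C_{p^n}$. Under $R$ the cyclotomic structure identifies the $e$-summands compatibly (and $R$ becomes the identity there), while the nontrivial summands assemble, in the homotopy limit and after $p$-completion, into a Borel construction for the tower $\{BC_{p^n}\}_n$, recovering $B\bT_+{}\phat$. Since $\bT$ is connected and $G$ is discrete, the $\bT$-action on $\bS[G]$ is trivial, so this Borel construction becomes $(\bS[G]\sma B\bT_+)\phat$, and the standard dimension shift from the circle transfer accounts for the $\Sigma$ in $((\Sigma\bS[G])_{h\bT})\phat$. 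The transfer then provides the right-hand vertical arrow of the asserted square.

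To identify the bottom arrow, observe that on the $e$-summand the Frobenius $F$ is (non-equivariantly) $\Sigma^{\infty}_{+}$ applied to the restriction map $(\Omega^{\infty}H\uG)^{C_p}\to \Omega^{\infty}H\uG$ of the Mackey functor $\uG$. By definition of $\uG$, this restriction along $C_p < \bT$ is multiplication by the index $[C_p:e]=p$, so $F$ specializes to the $[p]$-endomorphism of $\bS[G]$; meanwhile $R$ is the identity on the $e$-summand by the cyclotomic-structure identification in the previous theorem (which used that the transfer direction in $\uG$ is the identity). The homotopy equalizer of $F$ and $R$ on this summand is therefore the fiber of $[p]-\id$, and assembling with the transfer-mediated Borel contribution from the nontrivial summands produces the asserted homotopy fiber square.

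The main obstacle is bookkeeping: verifying that the tom Dieck splittings at the different levels $C_{p^n}$ cohere with both the Restriction maps (via the cyclotomic structure) and the Frobenius inclusions, and that the resulting homotopy equalizer assembles into a clean fiber square rather than a longer tower of correction terms. This is precisely the content of Madsen's simplification of~\cite{BHM}; the only genuinely new input is the identification of $F$ on the trivial summand with the $[p]$-map, which one reads off directly from the Mackey-functor structure of $\uG$.
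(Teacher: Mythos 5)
Your proposal follows the same route the paper intends: the paper states the proposition as an immediate consequence of Madsen's simplification of the B\"okstedt--Hsiang--Madsen argument applied to a cyclotomic suspension spectrum, and you correctly supply the one substantive extra input, namely that the Frobenius $F$ on the geometric fixed points summand is the $[p]$-map because the restriction in the Mackey functor $\uG$ is multiplication by the index $p$. The only slip is one of labeling: the summand of $(\bS_{\bT}[G])^{C_{p^n}}$ you isolate (the copy of $\bS[G]$ on which $F=[p]$ and $R\simeq\id$) is the summand indexed by the \emph{full} subgroup $C_{p^n}$ in the tom Dieck decomposition, i.e.\ $\Sigma^{\infty}_{+}(\Omega^{\infty}H\uG)^{C_{p^n}}\simeq\Phi^{C_{p^n}}\bS_{\bT}[G]$, not the $e$-isotropy summand; the trivial-subgroup summand is the homotopy orbits $\Sigma^{\infty}_{+}(\Omega^{\infty}H\uG)_{hC_{p^n}}$, which is what assembles into the Borel/transfer column of the square.
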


The above discussion in terms of groups can be
extended to monoids that inject into their group completion; we treat
in detail only the case of the natural numbers $0,1,2,3,\dotsc$.
Starting with $G=\bZ$, we get an $E^{\bT}_{\infty}$ space
$\Omega^{\infty}H\ubZ$. Non-equivariantly, the
components of $\Omega^{\infty}H\ubZ$ are canonically in
one-to-one correspondence with the integers; let
$\uOZN$ be the subspace of
those components corresponding to the natural numbers
$0,1,2,\dotsc$. The $\bT$-action and $E^{\bT}_{\infty}$ structure on 
$\Omega^{\infty}H\ubZ$ restrict to
$\uOZN$.  The spherical monoid ring
$\bS[\uOZN]$ then obtains an $E^{\bT}_{\infty}$ ring
structure.  This is the appropriate equivariant version of the
$E_{\infty}$ ring spectrum $\bS[t]$ for the cyclotomic structure
studied by Bhatt-Morrow-Scholze in~\cite[11.1]{BMS2}.

\begin{notn}
Write $\bS_{\bT}[t]$ for a $\bT$-equivariant commutative ring orthogonal spectrum model for
the $E^{\bT}_{\infty}$ ring spectrum $\bS[\uOZN]$.
\end{notn}

The map~\eqref{eq:HuGr} restricts to a weak equivalence of
$E^{\bT}_{\infty}$ spaces
\[
\rho^{*}((\uOZN)^{C_{p}})\to \uOZN.
\]
We use this to give $\bS_{\bT}[t]$ the structure of a commutative ring cyclotomic spectrum.

\begin{rem}
It is easy to see that the cyclotomic structure above is the (classic)
cyclotomic structure on $\bS[t]$ described by Bhatt-Morrow-Scholze
in~\cite[11.5]{BMS2}: the composite 
\[
\bS[t]:=\bS[\bN]\simeq \bS_{\bT}[t]\overfrom{\simeq}\Phi (\bS_{\bT}[t])
\to \rho^{*}((\bS_{\bT}[t])^{tC_{p}})\simeq \rho^{*}((\bS[\bN])^{tC_{p}})
\]
is induced by the multiplication by $p$ map $\bN\to \bN$.  (In the
display ``$\simeq$'' denotes $\bT$-equivariant Borel equivalence and
$\bS[\bN]$ has the trivial $\bT$-action.)
\end{rem}

Bhatt-Morrow-Scholze~\cite[\S11.1]{BMS2} study cyclotomic structures
relative to $\bS[t]$; our theory also gives such structures:

\begin{thm}
$\bS_{\bT}[t]\simeq \bS[\uOZN]$ with the cyclotomic
structure above admits the structure of a cyclotomic
base, i.e., $\Psi \colon \bS_{\bT}[t]\to
\bS_{\bT}[t]$ is the identity in the homotopy category of $E^{\bT}_{\infty}$ ring
spectra.
\end{thm}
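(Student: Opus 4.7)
The plan is to mimic the argument given for $\bS_{\bT}[G]$ with $G = \bZ$, and then observe that the whole story restricts from $\Omega^{\infty}H\ubZ$ to the sub-$E^{\bT}_{\infty}$-space $\uOZN$. By construction the cyclotomic structure map on $\bS_{\bT}[t]$ is the restriction of the cyclotomic structure map on $\bS_{\bT}[\bZ]$ along the inclusion $\bS[\uOZN] \hookrightarrow \bS[\Omega^{\infty}H\ubZ]$, and all the constructions appearing in the definition of $\Psi$ (the multiplicative diagonal, the counit $N_{e}^{C_{p}}R \to \uR$, and the precyclotomic structure map) are natural in maps of commutative ring precyclotomic spectra.

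First, I would note that $\Psi$ on $\bS_{\bT}[t] = \bS[\uOZN]$ is $\bS[-]$ applied to a self-map of $\uOZN$ as an $E^{\bT}_{\infty}$-space, namely the composite of the (multiplicative) transfer $\uOZN \to (\uOZN)^{C_{p}}$ followed by the cyclotomic structure map $(\uOZN)^{C_{p}} \to \uOZN$. Both of these maps are well-defined because they are restrictions of the corresponding maps on $\Omega^{\infty}H\ubZ$, and on $\pi_{0}$ these underlying maps are the identity on $\bZ$ (by the definition of the opposite constant Mackey functor $\ubZ^{\op}$), and in particular preserve the submonoid $\bN$. Thus the relevant continuous maps restrict to the union of the components of $\Omega^{\infty}H\ubZ$ indexed by $\bN$, which is $\uOZN$.

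Second, by the argument in the proof of the previous theorem applied to $G = \bZ$, the composite viewed as a self-map of $\Omega^{\infty}H\ubZ$ is homotopic to the identity as a map of $E^{\bT}_{\infty}$-spaces. To descend this homotopy to $\uOZN$, I observe that $\uOZN$ is a union of path-components of $\Omega^{\infty}H\ubZ$: for any $c \in \uOZN$, the homotopy traces a path in $\Omega^{\infty}H\ubZ$ from $c$ back to $c$, and this path lies entirely in the path-component of $c$, which is contained in $\uOZN$. Thus the homotopy restricts to a homotopy on $\uOZN$, showing the self-map is the identity in the homotopy category of $E^{\bT}_{\infty}$-spaces. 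Applying $\bS[-]$ produces the required homotopy from $\Psi$ to the identity in the homotopy category of $E^{\bT}_{\infty}$ ring spectra, and as before Proposition~\ref{prop:disk} shows the choice is essentially unique.

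The only conceptual point to check — which I would regard as the main (small) obstacle — is the compatibility of the homotopy with the sub-$E^{\bT}_{\infty}$-space structure on $\uOZN$. This is addressed by the path-component argument above, together with the formal observation that all the structure maps (norm diagonal, counit, cyclotomic structure map) are constructed naturally enough to restrict along the submonoid inclusion. Given this, the proof reduces cleanly to the group case already established.
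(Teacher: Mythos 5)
Your proof is correct and takes essentially the same approach as the paper, which simply observes that the argument for $\bS_{\bT}[G]$ with $G=\bZ$ goes through once one notes that the transfer $\Omega^{\infty}H\ubZ\to (\Omega^{\infty}H\ubZ)^{C_{p}}$ restricts to $\uOZN\to (\uOZN)^{C_{p}}$. Your path-component argument for why the homotopy restricts is a reasonable elaboration of a detail the paper leaves implicit.
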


The proof follows from the corresponding proof above for $\ubZ$, noting
that the transfer 
\[
\Omega^{\infty}H\ubZ\to (\Omega^{\infty}H\ubZ)^{C_{p}}
\]
restricts to
\[
\uOZN\to (\uOZN)^{C_{p}}.
\]

\subsection{Example: The multiplicative Borel precyclotomic structure}\qquad \par\medskip

Given a (non-equivariant) cofibrant commutative ring orthogonal
spectrum $R$, we can apply the functor $\epsilon$ of
Notation~\ref{notn:epsilon} to get a $\bT$-equivariant commutative
ring orthogonal spectrum $\epsilon R$; we then tensor in the category
of $\bT$-equivariant commutative ring orthogonal spectra with the
$\bT$-space $E\bT$ to form a new $\bT$-equivariant commutative ring
orthogonal spectrum $\epsilon R\otimes E\bT$.  We have the following
identification of its geometric fixed points.

\begin{prop}\label{prop:mbpid}
Let $R$ be a cofibrant non-equivariant commutative ring orthogonal spectrum.
There exists a natural isomorphism of $\bT$-equivariant commutative
ring orthogonal spectra
\[
\Phi(\epsilon R\otimes E\bT)\iso \epsilon R\otimes \rho^{*}(E\bT/C_{p}).
\]
\end{prop}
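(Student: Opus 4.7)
The plan is to reduce the proposition to a more general formula: for any cofibrant commutative ring orthogonal spectrum $R$ and any $\bT$-CW complex $X$ on which $C_{p}$ acts freely, there is a natural isomorphism of $\bT$-equivariant commutative ring orthogonal spectra
\[
\Phi(\epsilon R\otimes X)\iso \epsilon R\otimes \rho^{*}(X/C_{p}).
\]
The proposition is then the special case $X=E\bT$, which is free as a $\bT$-space and hence as a $C_{p}$-space. Unpacking $\Phi=\rho^{*}\Phi^{C_{p}}$ and using that $\rho^{*}$ is an equivalence of categories commuting with the change-of-universe functor $\epsilon$ and with tensor with a space, the formula reduces to constructing a natural isomorphism
\[
\Phi^{C_{p}}(\epsilon R\otimes X)\iso \epsilon R\otimes(X/C_{p})
\]
in the category of $\bT/C_{p}$-equivariant commutative ring orthogonal spectra indexed on $U^{C_{p}}$.

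To prove the reduced formula, I would induct on the $\bT$-cell structure of $X$, whose cells can be taken to have the form $\bT\times D^{n}$ and hence free restricted $C_{p}$-action. The base case is $X=C_{p}$: here, $\epsilon R\otimes C_{p}$ in $C_{p}$-equivariant commutative ring orthogonal spectra is the $p$-fold coproduct of $\epsilon R$ with itself, i.e., the $p$-fold smash $(\epsilon R)^{\sma p}$ with cyclic permutation action, which identifies canonically with the Hill--Hopkins--Ravenel norm $N_{e}^{C_{p}}R$ by the universal property of $N_{e}^{C_{p}}$ recalled in Section~\ref{sec:equivariant}. The point-set diagonal isomorphism $\Phi^{C_{p}}(N_{e}^{C_{p}}R)\iso R$ (valid for cofibrant $R$), combined with $C_{p}/C_{p}=*$ and hence $\epsilon R\otimes(C_{p}/C_{p})\iso \epsilon R$, yields the base case. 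For the inductive step, a free $C_{p}$-cell attachment of the form $X=X'\cup_{C_{p}\times S^{n-1}}(C_{p}\times D^{n})$ is carried by $\epsilon R\otimes(-)$ to a pushout in $\bT$-equivariant commutative ring orthogonal spectra; one then checks that both $\Phi^{C_{p}}$ and $\epsilon R\otimes(-/C_{p})$ carry this pushout to a pushout.

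The main obstacle will be the point-set verification that $\Phi^{C_{p}}$ converts pushouts along free $C_{p}$-cell attachments into pushouts of commutative ring orthogonal spectra. This rests on the symmetric monoidality structure map $\lambda$ for $\Phi^{C_{p}}$ being an isomorphism on the class $\EQC$ of Section~\ref{sec:equivariant}, and hence on checking that the objects built at each stage of the induction (iterated smash powers of $\epsilon R$ smashed with suspensions, etc.) remain in $\EQC$ so that the formula $\Phi^{C_{p}}(A\sma B)\iso \Phi^{C_{p}}A\sma\Phi^{C_{p}}B$ holds on the nose. The cofibrancy of $R$ is exactly what ensures this via the results of~\cite{BM-cycl23}; without it, one would only obtain an isomorphism of derived functors rather than the point-set natural isomorphism required by the statement. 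The residual $\bT/C_{p}$-equivariance then comes for free from the naturality of the entire construction in $\bT$-equivariant self-maps of $X$, producing the desired $\bT$-equivariant isomorphism after applying $\rho^{*}$.
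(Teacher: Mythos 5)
Your approach is genuinely different from the paper's: where the paper constructs the isomorphism in one step using the diagonal transformation $N_{e}^{\bT/C_{p}}\to \Phi^{C_{p}}N_{e}^{\bT}$ of \cite[4.6]{ABGHLM} applied to the levels of the bar construction $E\bT=|B_\ssdot(*,\bT,\bT)|$ and then takes geometric realization, you propose to build the isomorphism by cell induction and the $C_{p}$-norm diagonal $\Phi^{C_{p}}N_{e}^{C_{p}}R\iso R$. In principle this is a reasonable alternative, but as written there are two gaps.

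First, there is an internal inconsistency in your cell-attachment bookkeeping. You say ``induct on the $\bT$-cell structure of $X$, whose cells can be taken to have the form $\bT\times D^{n}$,'' but then the base case is $X=C_{p}$ and the inductive step glues $C_{p}\times D^{n}$. A free $\bT$-cell for a $\bT$-space like $E\bT$ is $\bT\times D^{n}$, not $C_{p}\times D^{n}$, so the base case should be $X=\bT$ (or a disjoint union of copies of $\bT$), and the relevant input is then the cyclotomic isomorphism $\Phi\,THH(R)\iso THH(R)$, i.e.\ the $N_{e}^{\bT}$-diagonal, exactly the paper's tool, applied to $\epsilon R\otimes \bT=THH(R)$. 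If instead you genuinely want to run the induction $C_{p}$-equivariantly on $C_{p}$-cells (so that the base case really is $X=C_{p}$), then the intermediate objects in the filtration are not $\bT$-spaces, and the output of the induction is only an isomorphism of non-equivariant (or bare $C_{p}$-equivariant) commutative ring spectra, with the residual $\bT/C_{p}$-action on $\Phi^{C_{p}}(\epsilon R\otimes E\bT)$ nowhere visible in the construction.

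Second, and more fundamentally, the closing appeal to ``naturality of the entire construction in $\bT$-equivariant self-maps of $X$'' does not repair this, because an isomorphism assembled cell-by-cell from a chosen $C_{p}$-CW structure is not manifestly a natural transformation in $X$: the cell decomposition is a choice, and $\bT$-self-maps of $E\bT$ are not cellular for it. The paper avoids this entirely by defining the natural map $\delta$ upfront, via the universal property of $N_{e}^{\bT/C_{p}}$, so that naturality (and hence the compatibility with the simplicial structure maps of $B_\ssdot(*,\bT,\bT)$, and hence the $\bT$-equivariance) is built in, and then checks isomorphism; your argument would need to follow the same order of operations --- first produce a natural transformation, for instance via the norm universal property, and only then invoke cell induction to prove it is an isomorphism --- for the equivariance to come ``for free.'' Your identification of the key technical inputs (the $\Phi^{C_{p}}$-$N_{e}^{C_{p}}$ diagonal, the role of the class $\EQC$ and cofibrancy, and $\Phi^{C_{p}}$ preserving the relevant colimits) is correct, but the logical scaffolding around them needs the repair above.
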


\begin{proof}
Working with the commutative ring orthogonal spectrum $R\otimes X$ for
a space $X$, the diagonal map $N_{e}^{\bT/C_{p}}\to
\Phi^{C_{p}}N_{e}^{\bT}$ 
of~\cite[4.6]{ABGHLM} induces a $\bT$-equivariant map 
\[
\delta \colon \epsilon R\otimes \rho^{*}(X\times \bT/C_{p})\to
\Phi (\epsilon R\otimes (X\times \bT)),
\]
natural in the commutative ring orthogonal spectrum $R$ and the space
$X$, which is an isomorphism when $R$ is cofibrant and $X$ is a cell
complex. Moreover, since $R$ is cofibrant as a commutative ring
orthogonal spectrum, $\epsilon R$ is cofibrant as a $\bT$-equivariant commutative
ring orthogonal spectrum, and the above generalized diagonal map is an
isomorphism.  By the universal property of
$N_{e}^{\bT/C_{p}}$ (the $\bT/C_{p}$ analogue of
Proposition~\ref{prop:tensor}), any equivariant map
\[
\epsilon R\otimes \rho^{*}(X\times \bT/C_{p})\to
\Phi (\epsilon R\otimes (X\times \bT))
\]
is uniquely characterized by the composite map of non-equivariant
commutative ring orthogonal spectra
\[
R\otimes X\to i(\epsilon R\otimes \rho^{*}(X\times \bT/C_{p}))\to
i(\Phi (\epsilon R\otimes (X\times \bT))).
\]
Now consider the diagram 
\[
\xymatrix{%
\epsilon R\otimes \rho^{*}(\bT\times \bT/C_{p})
\ar[r]^{\delta}\ar[d]
&\Phi (\epsilon R\otimes (\bT\times \bT))\ar[d]
\\
\epsilon R\otimes \rho^{*}(\bT/C_{p})\ar[r]_{\delta}
&\Phi (\epsilon R\otimes \bT)
}
\]
where the vertical maps are induced by the action maps $\bT\times
\bT/C_{p}\to \bT/C_{p}$ and $\bT\times \bT\to \bT$.  Looking at the
underlying non-equivariant diagram and precomposing with the
non-equivariant map  
\[
R\otimes \bT \to i(\epsilon R\otimes \rho^{*}(\bT\times \bT/C_{p}))
\]
both composite maps
\[
R\otimes \bT\to i(\Phi (\epsilon R\otimes \bT))\iso R\otimes \bT
\]
are the identity map, so the diagram commutes.
Applying these observations to $X=E\bT$ defined in terms of the bar
construction
\[
E\bT=|B\subdot (*,\bT,\bT)|=|\bT\times \dotsb \times \bT\times \bT|,
\]
we see that $\delta$ then induces a natural isomorphism of simplicial
$\bT/C_{p}$-equivariant commutative ring orthogonal spectra 
\[
\epsilon R\otimes \rho^{*}B\subdot(*,\bT,\bT)/C_{p}\to 
\Phi(\epsilon R\otimes B\subdot(*,\bT,\bT)).
\]
Taking geometric realization, we get the natural isomorphism of the
statement. 
\end{proof}

We observe that the $\bT$-space $\rho^{*}(E\bT/C_{p})$ is a free $\bT$-CW
complex non-equivariantly homotopy equivalent to $BC_{p}$; we choose and
fix a $\bT$-equivariant homotopy equivalence $\rho^{*}(E\bT/C_{p})\to
BC_{p}\times E\bT$. 

\begin{defn}
The \term{multiplicative Borel precyclotomic structure} on a cofibrant
(non-equivariant) commutative ring orthogonal spectrum $R$ consists of
the genuine $\bT$-equivariant commutative ring orthogonal spectrum
$\epsilon R\otimes E\bT$, with the precyclotomic structure map
\[
\Phi(\epsilon R\otimes E\bT)\iso \epsilon R\otimes
\rho^{*}(E\bT/C_{p})\to \epsilon R\otimes (BC_{p}\times E\bT)
\to \epsilon R\otimes (E\bT)
\]
induced by the isomorphism of the proposition above, the homotopy
equivalence chosen above, and the collapse
map $BC_{p}\to *$.  
\end{defn}

\begin{prop}
For $R$ a cofibrant (non-equivariant) commutative ring orthogonal
spectrum, the multiplicative Borel precyclotomic spectrum
$\uR=\epsilon R\otimes E\bT$ admits the natural structure of a
precyclotomic base. 
\end{prop}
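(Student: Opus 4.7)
The plan is to show that the cyclotomic power operation $\Psi\colon R\otimes E\bT \to R\otimes E\bT$ associated to the multiplicative Borel precyclotomic spectrum $\cR$ (with underlying $\bT$-equivariant commutative ring spectrum $\uR = \epsilon R\otimes E\bT$) is homotopic to the identity in the homotopy category of commutative ring orthogonal spectra. A choice of such a homotopy then supplies the required precyclotomic base structure, with different choices producing weakly equivalent precyclotomic bases by Proposition~\ref{prop:disk}.

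First I would unwind Definition~\ref{def:power-operation}: writing $B = iR\otimes E\bT$, the map $\Psi$ is the composite
\[
B \overto{\mathrm{diag}} \Phi^{C_p}N_e^{C_p}B \to \Phi^{C_p}\uR \overto{r_{\cR}} B,
\]
where the first map is the multiplicative norm diagonal (an isomorphism for cofibrant $B$), the second is $\Phi^{C_p}$ applied to the counit of the adjunction $N_e^{C_p}\dashv i_{C_p}$, and the third is the underlying non-equivariant map of the precyclotomic structure map. Next, Proposition~\ref{prop:mbpid} identifies $\Phi^{C_p}\uR \iso \epsilon R\otimes (E\bT/C_p)$ as a $\bT/C_p$-equivariant commutative ring spectrum, with underlying non-equivariant spectrum $R\otimes (E\bT/C_p)$. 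Under this identification, the precyclotomic structure map becomes the map $R\otimes (E\bT/C_p)\to R\otimes E\bT$ induced by the composite of the chosen $\bT$-equivariant homotopy equivalence $\rho^*(E\bT/C_p)\to BC_p\times E\bT$ with the projection collapsing $BC_p$ to a point.

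The heart of the argument is then to identify the composite of the first two maps in $\Psi$: after applying the multiplicative diagonal on the source and the identification $\delta$ of Proposition~\ref{prop:mbpid} on the target, this becomes a map $R\otimes E\bT\to R\otimes (E\bT/C_p)$, and I claim this is the map induced by the quotient $q\colon E\bT \to E\bT/C_p$. This follows from the naturality of $\delta$ (in both the commutative ring variable and the bar-construction variable used in the proof of Proposition~\ref{prop:mbpid}) together with the fact that the counit of $N_e^{C_p}\dashv i_{C_p}$ for $\uR = \epsilon R\otimes E\bT$ corresponds under the tensor formalism to the identity on $E\bT$. Combining all three identifications, $\Psi$ is the image under $R\otimes(-)$ of the unpointed self-map of $E\bT$ given by
\[
E\bT \overto{q} E\bT/C_p \overto{\simeq} BC_p\times E\bT \to E\bT.
\]
Since $E\bT$ is contractible, any self-map is unpointedly homotopic to the identity, and applying the functor $R\otimes(-)$ converts such a homotopy into the desired homotopy from $\Psi$ to the identity in the category of commutative ring orthogonal spectra.

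The main obstacle is the identification in the previous paragraph: showing that the composite of the multiplicative norm diagonal and $\Phi^{C_p}$ of the counit becomes, after the identifications, the map induced by $q$. Handling this cleanly requires unpacking the explicit construction of $\delta$ in the proof of Proposition~\ref{prop:mbpid}, which uses the natural transformation $N_e^{\bT/C_p}\to \Phi^{C_p}N_e^{\bT}$ of~\cite[4.6]{ABGHLM}, and tracking its compatibility with both the multiplicative norm diagonal and the counit of the norm adjunction applied to $\uR = \epsilon R\otimes E\bT$. Once this identification is in hand, the remaining steps are bookkeeping using Proposition~\ref{prop:mbpid} and the contractibility of $E\bT$.
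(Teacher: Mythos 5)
Your proposal is correct and follows essentially the same route as the paper's proof: you identify the composite of the first two maps in $\Psi$ with the map induced by the quotient $E\bT\to E\bT/C_p$ via Proposition~\ref{prop:mbpid}, conclude that $\Psi$ is $R\otimes(-)$ applied to the self-map $E\bT\to E\bT/C_p\to BC_p\times E\bT\to E\bT$, and then use contractibility of $E\bT$ to produce the homotopy to the identity. The only difference is one of emphasis: where the paper states that the identification of the first two maps as the quotient is ``clear from the construction'' of the isomorphism $\delta$, you flag this as the step requiring the most care and sketch how to verify it by tracking naturality of $\delta$ and the counit of the norm adjunction — a reasonable elaboration of the same argument.
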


\begin{proof}
It is clear from the construction of the isomorphism of
Proposition~\ref{prop:mbpid} that the map
\[
R \to \Phi^{C_{p}}N_{e}^{C_{p}} R\to \Phi^{C_{p}}\uR
\]
is the map $\epsilon R\otimes E\bT\to \epsilon R\otimes (E\bT/C_{p})$
induced by the quotient map $E\bT\to E\bT/C_{p}$.  The composite of
this and the precyclotomic structure map is then the self-map of
$\epsilon R\otimes E\bT$ induced by the map
\[
E\bT\to E\bT/C_{p}\to BC_{p}\times E\bT\to E\bT,
\]
which is evidently (non-equivariantly) homotopic to the identity. 
\end{proof}

We have a variant of this precyclotomic base structure using $\epsilon R$
without first tensoring with $E\bT$: the multiplicative tom Dieck
splitting of~\cite[Thm.~A]{BM-cycl23} identifies $\Phi \epsilon R$
(up to weak equivalence) as $R\sma (R\otimes BC_{p})$ with the
multiplicative transfer the map 
\[
R\iso R\otimes * \to R\otimes BC_{p}\to R\sma (R\otimes BC_{p})
\]
induced by the inclusion of the base point in $BC_{p}$.  If we give
$\epsilon R$ the precyclotomic structure
\[
\Phi \epsilon R \simeq R\sma (R\otimes BC_{p})\to R
\]
which is the identity on the $R$ factor and the map 
\[
R\otimes BC_{p}\to R\otimes *\iso R
\]
on the other factor, then $\Psi \colon R\to R$ is homotopic to the identity.

As a particular example, we get two $\bT$-equivariant versions of $H\bZ$
with precyclotomic base structures.  This does not contradict Hesselholt's
observation~\cite[7.1]{ABGHLM} on the non-existence of a cyclotomic
structure on relative $THH^{H\bZ}$, because the structures induced on
$THH^{H\bZ}$ are precyclotomic and not cyclotomic.

\subsection{Example: $H\uFp$}\label{ex:Fp}\quad\par\medskip

Let $H\uFp$ be a cofibrant model for the $\bT$-equivariant commutative ring spectrum
specified by the Eilenberg-Mac\,Lane spectrum on the constant Mackey
functor on $\bF_{p}$ (with constant restriction maps and zero transfer
maps).  In this case, the geometric fixed points $\Phi^{C_{p}}H\uFp$ is
the connective cover of the Tate fixed point spectrum
$H\uFp^{tC_{p}}$.  In particular, it is a connective
$\bT/C_{p}$-equivariant commutative ring orthogonal spectrum with
$\underline\pi_{0}\iso \uFp$ as a Green functor.  It follows that there
exists a unique map in the homotopy category of $\bT$-equivariant
commutative ring orthogonal spectra $\Phi H\uFp\to H\uFp$ and a unique
homotopy type in commutative ring precyclotomic spectra with this map
as the structure map.  The cyclotomic power operation $\Psi$ is a map of
commutative ring orthogonal spectra $H\bF_{p}\to H\bF_{p}$ and is
therefore homotopic to the identity, with a contractible space of
choices for the homotopy.

In this example, it is easy to compute relative $TC$.  In the
statement, $D(S^{1}_{+})$ denotes the Spanier-Whitehead dual of
$S^{1}_{+}$ with the commutative ring spectrum structure induced by the
diagonal map on $S^{1}$.

\begin{prop}
There exists a weak equivalence of commutative ring spectra between
$TC^{H\underline{\bF}_p}(H\underline{\bF}_p)$ and the homotopy
equalizer of the identity with itself on $H\bF_{p}$, 
\[
TC^{H\underline{\bF}_p}(H\underline{\bF}_p) \simeq \HoEq(\id,\id\colon 
H\bF_p\to H\bF_p)\simeq H\bF_{p}\sma D(S^{1}_{+}).
\]
\end{prop}

\begin{proof}
We can compute $TC$ (as a commutative ring spectrum) as the homotopy
equalizer of the maps $R$ and $F$ on 
\[
TF^{H\underline{\bF}_p}(H\underline{\bF}_p) := \holim_{n} H\underline{\bF}_p^{C_{p^n}}
\]
where the limit is taken along the inclusion of fixed point maps $F$
(see for example~\cite[\S 2.5]{Madsen-Traces} for this fact and the definition
of $R$ and $F$).
The maps $F$ are maps of commutative ring spectra $H\bF_{p}\to
H\bF_{p}$ and so are homotopic to the identity (as maps of commutative
ring spectra) in an essentially unique way.  This gives us a weak
equivalence of commutative ring spectra
$TF^{H\underline{\bF}_p}(H\underline{\bF}_p)\simeq H\bF_{p}$.  The
self-maps $R$ and $F$ of $TF^{H\underline{\bF}_p}(H\underline{\bF}_p)$
are maps of commutative ring spectra and so also homotopic to the identity
in an essentially unique way.
\end{proof}

Although the underlying non-equivariant spectra of both
$THH^{H\uFp}(HA)$ and $THH^{\epsilon H\bF_{p}}(HA)$ are equivalent to
$HH^{\bF_{p}}(A)$ (for any $\bF_{p}$-algebra $A$), we note that these
are quite different equivariant spectra.  For example,
non-equivariantly 
\[
\bL\Phi
(THH^{H\uFp}(H\bF_{p}))\simeq \Phi(H\uFp)\simeq \tau_{\geq 0}H\uFp^{tC_{p}},
\]
which has as its homotopy groups a single copy of $\bF_{p}$ in each
non-negative degree, whereas 
\[
\bL\Phi (THH^{\epsilon
H\bF_{p}}(H\bF_{p}))\simeq H\bF_{p}\sma (H\bF_{p}\otimes BC_{p})
\]
(see Warning~\ref{warn:epsilon}),
which has as its homotopy groups a free module over the dual Steenrod
algebra.  For more on $\epsilon H\bF_{p}$ (and a precyclotomic base
structure on it), see the example on multiplicative Borel
precyclotomic structures above.

\subsection{Example: connective homotopical and periodic equivariant
complex\break cobordism}\label{ex:MU}\qquad \par\medskip

Given an equivariant map of $E^{G}_{\infty}$ spaces $\alpha \colon X\to
\uBUZ$, we get a $G$-equivariant $E^{G}_{\infty}$ ring Thom spectrum
$M(\alpha)$ \cite[X.3]{LMS}, which is usually denoted $MX$ by abuse of notation.  $M(\mathrm{id})$ is
the equivariant Thom spectrum $\uMUP$, periodic equivariant complex
cobordism. This construction is given by Lewis in Chapter~X
of~\cite{LMS} though not named per se; non-equivariantly, it first
appears in work of 
Snaith~\cite{Snaith-MUP}, but with the wrong $E_{\infty}$ multiplicative
structure~\cite{HahnYuan-Snaith}.
As a non-equivariant stable homotopy type, 
\[
MUP\simeq MU[u^{\pm1}]\simeq \bigvee_{n\in \bZ}\Sigma^{2n}MU
\]
(where $|u| = 2$).

The inclusion of $\uBU$ in $\uBUZ$ as the zero component
(non-equivariantly: on fixed points it includes as the components of
virtual dimension zero in the representation ring) produces the
equivariant Thom spectrum $\uMU$, homotopical equivariant complex
cobordism.  In this section, we introduce a new $G$-equivariant
$E^{G}_{\infty}$ ring Thom spectrum $\uMUC$ which we call connective
homotopical equivariant complex cobordism, which is the Thom spectrum of
a map of $E^{G}_{\infty}$ spaces $\uBUC\to \uBUZ$, where on each fixed
point subspace $\uBUC$ includes as the zero component.  We emphasize
that despite similar notation, $\uMUC$ is more related to 
homotopical equivariant cobordism than equivariant geometric cobordism. (The Thom
spectrum model of the geometric theory for equivariant unoriented
cobordism (for groups satisfying Wasserman's condition) is now
generally denoted 
$mO$~\cite{Schwede-Global}, but in older literature was denoted
$mo$~\cite[\S XV]{May-Alaska}.) 

The equivariant space $\uBUC$ and map $\uBUC\to \uBUZ$, without further
structure, are easily constructed by Elmendorf's theorem: working with
the orbit diagram for $\uBUZ$, we take the subdiagram that at the
orbit $G/H$ consists of the zero component.  Non-equivariantly, the
zero component is a product of copies of $BU$ indexed on the
irreducible representations of $H$.  The fact that these subspaces
should be closed under transfers suggests that there should exist an
$E^{G}_{\infty}$ model.  To construct this, we note that as an
$E^{G}_{\infty}$ space 
\[
\uBUZ
\simeq \Omega^{\infty}\tau_{\geq 0}\uKU,
\]
where $\tau_{\geq 0}\uKU$ is the genuine $G$-equivariant spectrum
characterized (up to weak equivalence) by $\pi^{H}_{q}=0$ for $q<0$
and having the extra structure of a map to $\uKU$ which is an
isomorphism of ($\bZ$-graded) homotopy groups in non-negative degrees.
There is an essentially unique map of genuine $G$-equivariant spectra
from $\tau_{\geq 0}\uKU$ to $H\uRep$ that is the identity on
$\upi_{0}$, where $\uRep$ is the complex representation ring Mackey
functor.  We then get a map of $E^{G}_{\infty}$ spaces, 
\[
\uBUZ\to \Omega^{\infty}H\uRep
\]
and we obtain an $E^{G}_{\infty}$ space model of $\uBUC$ as the
homotopy fiber.

\begin{notn}
Let $\uMUC$ be the $E^{G}_{\infty}$ Thom spectrum associated to the
$E^{G}_{\infty}$ space map $\uBUC\to \uBUZ$.
\end{notn}

The geometric fixed point functor commutes with the Thom spectrum
functor in the following sense. For clarity, we add a subscript to
denote the group of equivariance, as in $\uBUZ_{G}$ (for the 
$E^{G}_\infty$ space $\uBUZ$).  For a subgroup
$H<G$, let $WH$ denote its Weyl group.  We have a map of
$E^{WH}_{\infty}$ spaces 
\begin{equation}\label{eq:fpmap}
\phi^{H}\colon (\uBUZ_{G})^{H}\to \uBUZ_{WH}
\end{equation}
induced by replacing an $H$-equivariant complex vector space with its
$H$-fixed point subspace.  Starting with a $G$-equivariant map 
\[
\alpha \colon X\to \uBUZ
\]
defining a genuine $G$-equivariant Thom spectrum $MX$, taking
$H$-fixed points, we get a map of $WH$-spaces
\[
\alpha^{H}\colon X^{H}\to (\uBUZ)^{H}\simeq BU_{WH}\times R(H).
\]
Let \[
\phi^{H}\alpha^{H}\colon X^H \to \uBUZ_{WH}
\]
be the composite of
$\alpha^{H}$ with $\phi^{H}$.  
Then the geometric fixed points $\Phi^H MX$
is weakly equivalent to the genuine $WH$-equivariant Thom spectrum
$M(X^{H})$ for the map $\phi^{H}\alpha^{H}$.
When
$\alpha$ is an $E^{G}_{\infty}$ map, $\alpha^{H}$ is an
$E^{WH}_{\infty}$ map, $M(X^{H})=M(\phi^{H}\alpha^{H})$ is an
$E^{WH}_{\infty}$ ring spectrum, and we get a weak equivalence of
$E^{WH}_{\infty}$ ring spectra $\Phi^H MX \simeq M(X^{H})$.

We now discuss precyclotomic structures and set $G=\bT$.  In the case
of $\uMUP$, the existence of such a structure was originally observed
by Brun~\cite[8.3]{Brun-MUP}.  In our notation, the map~\eqref{eq:fpmap} for
$H=C_{p}$ induces a map of $E^{\bT/C_{p}}_{\infty}$ ring spectra
\[
\Phi^{C_{p}}(\uMUP_{\bT})\to \uMUP_{\bT/C_{p}}.
\]
Applying the functor $\rho^{*}$, we then get a map of
$E^{\bT}_{\infty}$ ring spectra
\[
r\colon \Phi \uMUP\to \uMUP,
\]
which we take as the precyclotomic structure map.  We note that this
map is not a weak equivalence as the domain and codomain are not even
abstractly weakly equivalent; see~\cite[4.10]{Sinha-TBordism}.

In the case of $\uMUC$, the diagram of genuine $\bT/C_{p}$-equivariant spectra
\[
\xymatrix{%
(\tau_{\geq 0}KU_{\bT})^{C_{p}}\ar[d]
\ar[r]
&H\uRep^{C_{p}}\ar[d]\\
\tau_{\geq 0}KU_{\bT/C_{p}}\ar[r]
&H\uRep
}
\]
commutes up to homotopy where the lefthand vertical map is the spectrum-level
analogue of~\eqref{eq:fpmap} and the righthand vertical map is induced
by the map of Mackey functors that at level $(\bT/C_{p})/(K/C_{p})$ is
the unique (abelian group) homomorphism
$R(K)\to R(K/C_{p})$ that sends a $C_{p}$-trivial irreducible
$K$-representation to the corresponding
$K/C_{p}$-representation and sends $C_{p}$-non-trivial irreducible
$K$-representations to $0$.   The map $r$ therefore lifts up to homotopy to
a map of $E^{\bT}_{\infty}$ ring spectra
\[
r\colon \Phi \uMUC\to \uMUC,
\]
which we use for the precyclotomic structure.  Again this map is not
a weak equivalence: non-equivariantly, the left-hand side is equivalent to 
$MU^{(p)}\simeq MU\sma
(BU_{+})^{(p-1)}$ while the right-hand side is $MU$.

We observe that the trick above does not work to define a
precyclotomic structure on $\uMU$.  Non-equivariantly, $MU$ is
the Thom spectrum of $BU\times \{0\}\to BU\times \bZ$ and
$\Phi^{C_{p}}MU$ is the Thom spectrum of 
\[
BU\times I(R(C_{p}))\to BU\times R(C_{p})\to BU\times \bZ.
\]
The map $V\mapsto V^{H}$ does not send the augmentation ideal
$I(R(C_{p}))$ to $0\in \bZ$, so the map $r\colon \Phi \uMUP\to \uMUP$ does
not restrict to a map $\Phi \uMU\to \uMU$.

\begin{thm}\label{thm:mupcycbase}
Both $\cMUP$ and $\cMUC$ admit the structure of a precyclotomic base.
\end{thm}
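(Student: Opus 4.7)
The plan is to trace the cyclotomic power operation $\Psi$ through the Thom spectrum construction and produce a natural $E_\infty$-homotopy to the identity at the level of classifying spaces. For a Thom spectrum $M(\alpha)$ of an $E^{\bT}_\infty$-map $\alpha\colon X\to \uBUZ$, the composite defining $\Psi$ will be obtained by applying $M$ to the analogous composite on the underlying non-equivariant classifying space:
\[
\Psi_X\colon X\overto{\Delta} \Phi^{C_p}(X^p)\to \Phi^{C_p}(X)\overto{\phi}X.
\]
Here $\Delta$ is the diagonal, the middle map is iterated multiplication coming from the $E^{\bT}_\infty$-structure restricted to $C_p$, and $\phi$ is the non-equivariant underlying map of the precyclotomic structure---for $\cMUP$ the map $[V]\mapsto[V^{C_p}]$ coming from $\phi^{C_p}$, and for $\cMUC$ the corresponding map extracted from the commuting diagram of Example~\ref{ex:MU}.

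Next, I would evaluate $\Psi_X$ pointwise at a virtual complex vector space $[V]$: the diagonal sends it to the diagonal tuple in $X^p$, the multiplication sends this to $[V^{\oplus p}]$ (with $C_p$ acting by cyclic permutation of the summands), and $\phi$ takes $C_p$-fixed points, producing $[(V^{\oplus p})^{C_p}]$. The diagonal inclusion $V\hookrightarrow V^{\oplus p}$ exhibits $(V^{\oplus p})^{C_p}$ as canonically isomorphic to $V$, and the resulting natural isomorphism is natural in complex-linear isomorphisms of $V$ and compatible with direct sums. I would then package this into a symmetric monoidal natural isomorphism from $\Psi_X$ to the identity endofunctor on $\uBUZ$ as an $E^{\bT}_\infty$-space; the same isomorphism preserves the virtual-dimension-zero locus, so it restricts to the corresponding natural isomorphism on $\uBUC$. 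Applying the Thom spectrum functor, which carries $E_\infty$-homotopies of classifying maps to $E_\infty$-homotopies of ring spectrum maps, will yield the desired homotopy $\Psi\simeq\mathrm{id}$ in the $\infty$-category of commutative ring orthogonal spectra for both $\cMUP$ and $\cMUC$. A specific such homotopy determines the cyclotomic base structure, unique up to the equivalence of Proposition~\ref{prop:disk}.

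The main obstacle will be coherence: the pointwise identification $V\cong (V^{\oplus p})^{C_p}$ is obvious, but verifying that $\Psi$ is Thom-spectrum-induced from $\Psi_X$ and then promoting the space-level homotopy to a ring spectrum homotopy both require careful bookkeeping---most delicately for $\cMUC$, where the precyclotomic structure is defined via a commuting square rather than directly via $\phi^{C_p}$. The cleanest conceptual home for these coherences is the global equivariant framework sketched in Section~\ref{sec:facthom}: if one realizes $\uMUP$ and $\uMUC$ as the $\bT$-equivariant spectra underlying global commutative ring spectra with multiplicative deflations (produced by taking the global Thom spectrum of the global $J$-homomorphism $\underline{ku}\to\underline{\mathrm{pic}}(\underline{\Sp})$ and its restriction to $\underline{bu}$), then $\Psi\simeq\mathrm{id}$ is forced by the span identity $(BC_p\leftarrow BC_p\to *)\circ(*\leftarrow *\to BC_p)=(*\leftarrow *\to *)$ in $\Span(\Gpd)$, sidestepping direct coherence checks.
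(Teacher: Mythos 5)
Your proposal captures the same core idea as the paper's proof: reduce $\Psi$ to a self-map of the classifying space ($\uBUZ$ or $\uBUC$) that factors as transfer (your ``diagonal-then-multiplication'') followed by the $C_p$-fixed-points map $\phi^{C_p}$, and observe that this composite takes $[V]$ to $[(\mathrm{Ind}_e^{C_p}V)^{C_p}]\cong[V]$. That pointwise computation is exactly what the paper runs. Where you diverge is in how the homotopy $\Psi\simeq\mathrm{id}$ is promoted to a homotopy of commutative ring spectra. You propose to package the pointwise identification into a ``symmetric monoidal natural isomorphism'' of $E^{\bT}_\infty$-space maps and then apply the Thom spectrum functor, correctly flagging coherence as the main obstacle. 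The paper sidesteps this entirely with one clean observation you don't exploit: the transfer $X\to X^{C_p}$ and the fixed-points map $X^{C_p}\to X$ are both $\Omega^\infty$ of spectrum-level maps (here $X=\Omega^\infty\tau_{\geq 0}\uKU$ or its connective-cover variant), so the needed homotopy is simply a homotopy in the stable category between two maps of spectra whose composite is visibly the identity. Since the Thom spectrum functor on $E_\infty$-maps is itself an $\infty$-functor, that single stable homotopy gives the desired homotopy of commutative ring maps, with no need to assemble pointwise identifications into a coherent natural isomorphism of vector bundle categories. The paper also treats $\cMUP$ and $\cMUC$ uniformly (with $X=\uBUZ$ or $\uBUC$ respectively), whereas you single out $\cMUC$ as delicate; the uniformity works precisely because the argument runs at the spectrum level. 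Finally, your fallback via the global framework of Section~\ref{sec:facthom} is offered in the paper only as heuristic motivation, not as proof; your primary line, suitably cleaned up using the ``$\Omega^\infty$ of a spectrum map'' observation and the citation to \cite{BM-cycl23} identifying the multiplicative transfer on Thom spectra, is closer to what the paper actually does.
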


\begin{proof}
Let $\cR$ be a commutative ring precyclotomic spectrum model for $\cMUP$ or
$\cMUC$ and let $X=\uBUZ$ or $\uBUC$, respectively.  The first part
of the operation $\Psi$, 
\[
R\to \Phi^{C_{p}}N_{e}^{C_{p}}R\to \Phi^{C_{p}}\uR
\]
is the map of Thom spectra induced by the transfer $X\to
X^{H}$~\cite[IV.1.4]{BM-cycl23}, and the second part is $r$, which is
the map of Thom spectra induced by the map $X^{H}\to X$ described
above.  Both maps are $\Omega^{\infty}$ of spectrum-level maps, and
the composite on the spectrum level is the identity in the stable
category. 
\end{proof}

\subsection{Non-example: $\uKU$}\label{ex:Kth}\quad\par\medskip

Although the geometric construction of the precyclotomic structure map
on $\uMUP$ suggests that the equivariant complex $K$-theory spectrum
$\uKU$ might provide another example of a precyclotomic base, in fact,
it cannot even be a ring precyclotomic spectrum.  By~\cite[3.1]{Greenlees-forms}, the integer
$p$ is a unit in  $\pi_{0}(\Phi^{C_p}\uKU)$, and so there are no maps
of ring spectra $\Phi \uKU \to \uKU$.

\numberwithin{equation}{section}

\section{Precyclotomic spectra and Nikolaus-Scholze $TC(-;p)$}
\label{sec:phiinfty}

Nikolaus-Scholze~\cite{NikolausScholze} describes a shortcut for
calculating $TC_{\cyc}$ of connective cyclotomic spectra.  The purpose of
this section is to convert a connective precyclotomic spectrum to a
cyclotomic spectrum with equivalent $TC_{\cyc}$, which can then be calculated
by the Nikolaus-Scholze formula.  This amounts to describing
concretely the restriction to connective objects of the right adjoint
to the inclusion of the homotopy category of cyclotomic spectra in the homotopy
category of precyclotomic spectra.  

The idea for the functor is to take the homotopy inverse limit of
iterates of $\Phi$.  This is complicated by the fact that to preserve
weak equivalences, we need both the left derived functor of $\Phi$ and
the right derived functor of inverse limit; implicitly this involves
both cofibrant and fibrant approximation.  We use $L$ and $R$ for
cofibrant and fibrant approximation functors, respectively, in the
category of precyclotomic spectra.  In our main construction that
follows, the fibrant approximation functor $R$ is only applied to
cofibrant objects, and in the case of a commutative ring precyclotomic
spectrum, we have a variant construction using cofibrant and fibrant
approximation in the category of commutative ring precyclotomic
spectra (q.v.~\cite[Thm.~D]{BM-cycl23}).  The construction is a homotopy
inverse limit in the category of precyclotomic spectra: for fibrant
objects this may be
constructed as the spacewise homotopy inverse limit (either using the 
Bousfield-Kan cobar construction or the mapping microscope
construction) in orthogonal spectra, given the 
evident precyclotomic structure map.

\begin{cons}
For a precyclotomic spectrum $\cX$ with structure map $r$, we note that
$\Phi \uX$ has the canonical structure of a precyclotomic spectrum
(that we denote $\Phi \cX$) with
structure map $\Phi r$, and that the map $r\colon \Phi \cX\to \cX$ is a
map of precyclotomic spectra.  Writing $\Phi^{n}$ for the $n$th
iterate of $\Phi$, we get an inverse system of precyclotomic spectra
$\Phi^{n}\cX$. Let $\Phi^{\infty}\cX$ be the homotopy inverse limit of the
inverse system of precyclotomic spectra $R\Phi^{n}L \cX$.
\end{cons}

By construction, $\Phi^{\infty}$ preserves weak equivalences of precyclotomic
spectra. Moreover, $\Phi^{\infty} \cX$ comes with a point-set map of
precyclotomic spectra $\Phi^{\infty} \cX\to RL\cX$ and a map in the homotopy
category $\Phi^{\infty} \cX\to \cX$, induced by the identity
$R\Phi^{0}L\cX=RL\cX$. These maps are weak equivalences of precyclotomic
spectra when $\cX$ is cyclotomic, and in particular $\Phi^{\infty} \cX$ is itself
cyclotomic in this case.  We also have the following observation:

\begin{prop}
Let $\cX$ be a precyclotomic spectrum that is
$\aF_{p}$-connective (i.e., $\uX^{C_{p^{n}}}$ is connective for all
$n\geq 0$).  Then $\Phi^{\infty} \cX$ is a cyclotomic spectrum. 
\end{prop}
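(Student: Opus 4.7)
The goal is to show that the derived precyclotomic structure map $\bL\Phi(\Phi^{\infty}\cX)\to \Phi^{\infty}\cX$ is an $\aF_{p}$-equivalence. The plan is to commute $\bL\Phi$ past the homotopy inverse limit defining $\Phi^{\infty}\cX$ and then identify the result via cofinality with $\Phi^{\infty}\cX$ itself, with the connectivity hypothesis being exactly what allows this commutation.

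First, I would establish that on the cofibrant objects $\Phi^{n}L\cX$ the functor $\Phi$ agrees with its derived functor (this uses the model structures of \cite{BM-cycl23}), so that the tower $\{R\Phi^{n}L\cX\}_{n\geq 0}$ with its structural maps $R\Phi^{n+1}L\cX \to R\Phi^{n}L\cX$ induced by $r$ is a correct point-set model for the tower computing $\bL\Phi^{n}\cX$ in the homotopy category of precyclotomic spectra. This also lets us model the structure map of $\Phi^{\infty}\cX$ as a precyclotomic spectrum: $\Phi(\Phi^{\infty}\cX) \to \Phi^{\infty}\cX$ is the map on homotopy limits induced by $\Phi$ applied termwise together with the shift.

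Next, I would show the commutation
\[
\bL\Phi \Phi^{\infty}\cX \htp \holim_{n} \bL\Phi R\Phi^{n}L\cX \htp \holim_{n} R\Phi^{n+1}L\cX,
\]
and then observe that the shifted tower $\{R\Phi^{n+1}L\cX\}$ is cofinal in the original tower, so its homotopy limit is again weakly equivalent to $\Phi^{\infty}\cX$. Under these identifications, the derived structure map $\bL\Phi\Phi^{\infty}\cX \to \Phi^{\infty}\cX$ becomes the canonical cofinality equivalence, hence is an $\aF_{p}$-equivalence.

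The main obstacle is the commutation $\bL\Phi\circ\holim \htp \holim\circ\bL\Phi$, which in general fails for geometric fixed points because $\Phi^{C_{p}}$ does not preserve infinite homotopy limits. The role of $\aF_{p}$-connectivity is to rescue this: since $\bL\Phi$ preserves $\aF_{p}$-connectivity (one checks this by the standard cofiber sequence relating $\Phi^{C_{p}}$ to categorical fixed points and the homotopy orbits, both of which preserve connectivity in the range that matters), each $R\Phi^{n}L\cX$ is uniformly $\aF_{p}$-connective. Combined with the fact that the maps in the tower raise connectivity on the relevant fixed-point strata (because $r$ kills the Tate part, and iterating $\Phi$ shifts connectivity on higher $C_{p^{k}}$-fixed points), one shows that for any fixed subgroup $H\in \aF_{p}$ and any degree $q$, the tower $\{\pi^{H}_{q}(R\Phi^{n+1}L\cX)\}$ stabilizes in $n$, so the relevant Milnor $\lim^{1}$ vanishes and the commutation holds on homotopy groups. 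Verifying this stabilization carefully, and matching the resulting equivalence with the structure map, is the technical heart of the proof; once it is in place, the cofinality step is formal.
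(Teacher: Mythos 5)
Your overall plan — model $\Phi^{\infty}\cX$ as a holim, commute $\bL\Phi$ past it, then use the shift/cofinality equivalence — matches the paper's strategy, and you correctly identify the norm cofiber sequence as the tool. But the specific mechanism you propose for the commutation is wrong. You claim that the maps in the tower ``raise connectivity on the relevant fixed-point strata'' because ``$r$ kills the Tate part,'' so that each $\{\pi^H_q(R\Phi^{n+1}L\cX)\}_n$ stabilizes in $n$ and $\lim^1$ vanishes. This is false for precyclotomic spectra: $r$ is only assumed to be a map, with no connectivity hypothesis, and the Tate interpretation of $r$ is a feature of \emph{cyclotomic} spectra, not precyclotomic ones. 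Concretely, for the multiplicative Borel precyclotomic structure $\uR = \epsilon R\otimes E\bT$ (Section~\ref{sec:examples}), the underlying non-equivariant spectrum of $\Phi^n\uR$ is roughly $R\otimes (BC_p)^{n}$ and the structure map is the collapse $BC_p\to *$; the homotopy groups in the tower grow without bound and do not stabilize. $\aF_p$-connectivity of $\cX$ is a lower bound on homotopy, not a statement about the connectivity of the maps $r$.

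The paper's argument avoids any claim about the tower stabilizing. It applies the norm cofiber sequence termwise (inside and outside the holim) and checks that the first two vertical maps are equivalences: $(-)^{C_{p^k}}$ commutes with $\holim$ unconditionally (it is a homotopy limit itself), while the homotopy-orbit term $(-\sma E\bT)^{C_{p^k}}$ commutes with the holim of a tower of connective spectra because $E\bT$ is filtered by finite $\bT$-CW subcomplexes $E\bT_q$, smashing with a finite complex commutes with holim, and the error terms $(-\sma E\bT/E\bT_q)$ are at least $q$-connected both before and after passing to holim. Then the map on the third term (the geometric fixed points) is a weak equivalence by the long exact sequence. So connectivity is used only as a uniform lower bound to control the error in the skeletal approximation, not to produce eventual isomorphisms in the tower. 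If you replace your stabilization step with this skeletal-filtration argument, the rest of your outline (the identification of the structure map and the cofinality step) goes through as you describe.
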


\begin{proof}
Let $\cX_{n}=R\Phi^{n}L\cX$.  It suffices to show that the natural map
\[
\Phi L(\holim_{n} R\cX_{n})\to \holim_{n}(R\Phi LR\cX_{n})
\]
is an $\aF_{p}$-equivalence.  
Looking at the norm cofiber sequence inside
and outside the $\holim$, and taking $C_{p^{k-1}}$-fixed points, we
get a map of cofiber sequences of non-equivariant spectra
\[
\xymatrix@C-1.25pc@R-.5pc{%
(R((\holim_{n} R\cX_{n})\sma E\bT))^{C_{p^{k}}}\ar[r]\ar[d]
&(R\holim_{n} R\cX_{n})^{C_{p^{k}}}\ar[r]\ar[d]
&(R^{2}\Phi L\holim_{n} R\cX_{n})^{C_{p^{k-1}}}\ar[d]\\
R\holim_{n} (R(\cX_{n}\sma E\bT))^{C_{p^{k}}}\ar[r]
&R\holim_{n} (R\cX_{n})^{C_{p^{k}}}\ar[r]
&R\holim_{n} (R\Phi LR\cX_{n})^{C_{p^{k-1}}}
}
\]
and it suffices to observe that the first two vertical maps are weak
equivalences.  The second vertical map is a weak equivalence without
hypotheses on $\cX_{n}$ and the first is a weak equivalence when the
underlying non-equivariant spectra of $\cX_{n}$ are connective.  (To see
this, let $E\bT_{q}$ denote the $\bT$-equivariant $q$-skeleton of $E\bT$.
Because $E\bT_{q}$ is a finite $\bT$-spectrum, smashing with it commutes
up to weak equivalence with homotopy limits.  Since both $((\holim_{n}
\cX_{n})\sma E\bT/E\bT_{q})^{C_{p^{k}}}$ and $\holim_{n} (\cX_{n}\sma
E\bT_q)^{C_{p^{k}}}$ are at least $q$-connected, the first map is a weak
equivalence.)
\end{proof}

\begin{prop}
Let $\cT$ be a cyclotomic spectrum and $\cX$ a precyclotomic spectrum.
Then the map $\Phi^{\infty} \cX\to \cX$ in the homotopy category of precyclotomic
spectra induces a weak equivalence 
\[
\bR F_{\cyc}(\cT,\Phi^{\infty} \cX)\to \bR F_{\cyc}(\cT,\cX)
\]
where $\bR F_{\cyc}$ denotes the derived mapping spectrum of precyclotomic maps.
\end{prop}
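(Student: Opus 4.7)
The plan is to reduce the statement to a single key lemma about cyclotomic objects, and then prove that lemma by an explicit inverse construction in the homotopy category.

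First, since $\Phi^{\infty}\cX = \holim_n R\Phi^n L\cX$ and derived mapping spectra send homotopy limits in the target variable to homotopy limits,
\[
\bR F_{\cyc}(\cT,\Phi^{\infty}\cX)\simeq \holim_n \bR F_{\cyc}(\cT, R\Phi^n L\cX).
\]
The map in the statement is projection onto the $n=0$ term, composed with $RL\cX\simeq \cX$. Thus the theorem reduces to showing that each transition map
\[
\bR F_{\cyc}(\cT, R\Phi^{n+1}L\cX)\longrightarrow \bR F_{\cyc}(\cT, R\Phi^n L\cX),
\]
induced by postcomposition with the structure map $r$ of $\Phi^n L\cX$, is a weak equivalence; for then the tower is essentially constant and its limit agrees with its $n=0$ term. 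Every such transition map is an instance of the following \emph{key lemma}: for any precyclotomic spectrum $\cY$, postcomposition with $r_{\cY}\colon \Phi\cY\to\cY$ induces a weak equivalence
\[
r_{\cY*}\colon \bR F_{\cyc}(\cT,\Phi\cY)\longrightarrow \bR F_{\cyc}(\cT,\cY)
\]
whenever $\cT$ is cyclotomic.

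To prove the key lemma, it suffices to establish the corresponding bijection on $\pi_0$: since $\Sigma^n \cT$ is again cyclotomic for every $n\in\bZ$ (geometric fixed points commute with suspension in the derived setting) and the construction is natural in $\cT$, applying the $\pi_0$ bijection with $\Sigma^n \cT$ in place of $\cT$ yields the $\pi_n$ bijection, hence a weak equivalence of mapping spectra. For the $\pi_0$ statement, work in $\Ho(\PreCyc)$, where cyclotomicity of $\cT$ means $r_{\cT}\colon \bL\Phi\cT\to\cT$ is invertible. Given $g\colon\cT\to\cY$ in $\Ho(\PreCyc)$, define $\tilde g := \Phi g\circ r_{\cT}^{-1}\colon\cT\to\Phi\cY$; the precyclotomic condition on $g$ (namely $r_{\cY}\circ \Phi g = g\circ r_{\cT}$) then gives
\[
r_{\cY}\circ\tilde g \;=\; r_{\cY}\circ\Phi g\circ r_{\cT}^{-1} \;=\; g\circ r_{\cT}\circ r_{\cT}^{-1} \;=\; g,
\]
so $r_{\cY*}$ is surjective. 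Conversely, if $r_{\cY}\circ h_1 = r_{\cY}\circ h_2$ for $h_i\colon\cT\to\Phi\cY$, applying $\Phi$ and invoking the precyclotomic condition $\Phi r_{\cY}\circ \Phi h_i = h_i\circ r_{\cT}$ on each $h_i$ (the structure map of $\Phi\cY$ being $\Phi r_{\cY}$) yields $h_1\circ r_{\cT} = h_2\circ r_{\cT}$; inverting $r_{\cT}$ gives $h_1=h_2$, establishing injectivity.

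The main technical point is justifying that the manipulations above, performed in $\Ho(\PreCyc)$, actually correspond to well-defined operations on the derived mapping spectra. One needs $\Phi$ to admit a left-derived endofunctor on precyclotomic spectra for which $r_{\cT}$ becomes a morphism in $\Ho(\PreCyc)$, and one needs compatible cofibrant-fibrant replacements so that the composite defining $\tilde g$ represents an honest morphism. The model structures of \cite{BM-cycl23}, applied to $\cT$, $\cY$, and each term $R\Phi^n L\cX$ of the tower, provide precisely this data; with those foundations in place the preceding argument becomes literal in $\Ho(\PreCyc)$ and the theorem follows.
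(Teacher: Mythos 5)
Your proposal is correct and takes essentially the same approach as the paper: reduce to the one-step claim that $r_{\cY*}\colon\bR F_{\cyc}(\cT,\Phi\cY)\to\bR F_{\cyc}(\cT,\cY)$ is an equivalence, and establish it via the explicit two-sided inverse $g\mapsto\Phi g\circ r_{\cT}^{-1}$, verified using the precyclotomic compatibility square and the identity $r_{\Phi\cY}=\Phi r_{\cY}$. The only cosmetic difference is that the paper works directly at the level of mapping spectra, using that $\Phi$ is spectrally enriched as an endofunctor of precyclotomic spectra so that the inverse is the composite $(r_{\cT}^{-1})^*\circ\Phi$ of spectrum maps, whereas you reduce to a $\pi_0$ bijection and upgrade by replacing $\cT$ with its (de)suspensions.
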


\begin{proof}
We can assume without loss of generality that $\cT$ is cofibrant; then
since $\cT$ is cyclotomic, $\Phi \cT$ is also cyclotomic, and $r\colon
\Phi \cT\to \cT$ is a weak equivalence.   Because $\Phi$, viewed as an
endofunctor on precyclotomic spectra, is spectrally enriched, it
follows that the map $r\colon \bL\Phi \cX\to \cX$ induces an isomorphism in the
stable category  
\[
\bR F_{\cyc}(\cT,\bL\Phi \cX)\to \bR F_{\cyc}(\cT,\cX),
\]
with the composite
\[
\bR F_{\cyc}(\cT,\cX)\overto{\Phi}\bR F_{\cyc}(\Phi \cT,\bL\Phi \cX)
\overto{(r^{-1})^*} \bR F_{\cyc}(\cT,\bL\Phi \cX)
\]
giving the inverse isomorphism. (This assertion is the tautological
observation that for any precyclotomic map $f\colon \cT\to \cX$, the diagram
\[
\xymatrix@-1pc{%
\Phi \cT\ar[r]^{\Phi f}\ar[d]_{r}&\Phi \cX\ar[d]^{r} \\
\cT\ar[r]_{f}&\cX
}
\]
commutes, combined with the definition of $r_{\Phi \cX}$ as $\Phi r_{\cX}$.)
By induction, the map $\bL\Phi^{n+1}\cX\to \bL\Phi^{n}\cX$ induces a weak equivalence 
\[
\bR F_{\cyc}(\cT,\bL\Phi^{n+1}\cX)\to \bR F_{\cyc}(\cT,\bL\Phi^{n}\cX)
\]
for all $n\geq 1$.  Since $\Phi^{\infty} \cX$ is the
homotopy limit in precyclotomic spectra, the map displayed in the
statement is also a weak equivalence.
\end{proof}

By the usual corepresentability result for $TC_{\cyc}$~\cite[6.8]{BM-cycl},
the previous proposition implies in particular that $\Phi^{\infty}
\cX\to \cX$ induces a weak equivalence on $p$-completed $TC_{\cyc}$.  In fact,
the sharper corepresentability result~\cite[6.7]{BM-cycl} implies the
following sharper result.  (We remind the reader that $TC_{\cyc}$ here means
the composite of the functor denoted $TC(-;p)$ in~\cite{BM-cycl} with
fibrant approximation.)

\begin{prop}
The natural map $\Phi^{\infty} \cX\to \cX$ in the homotopy category of
precyclotomic spectra induces a weak equivalence on $TC_{\cyc}$.
\end{prop}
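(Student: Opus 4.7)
The plan is to follow the hint given immediately before the statement: the previous proposition identifies $\Phi^{\infty}\cX\to \cX$ as inducing an equivalence on $\bR F_{\cyc}(\cT,-)$ for every cyclotomic spectrum $\cT$, and one just needs to feed this into the sharper corepresentability theorem \cite[6.7]{BM-cycl}. That result corepresents $TC_{\cyc}$ (integrally, not only after $p$-completion) as a derived mapping spectrum $\bR F_{\cyc}(\cT_{0},-)$ out of a specific cyclotomic spectrum $\cT_{0}$ (built from the sphere and the system of categorical fixed points), rather than only giving the $p$-completed statement of \cite[6.8]{BM-cycl}. So my first step is to quote \cite[6.7]{BM-cycl} precisely in the form
\[
TC_{\cyc}(\cY)\simeq \bR F_{\cyc}(\cT_{0},R\cY)
\]
naturally in precyclotomic spectra $\cY$, where $R$ denotes fibrant approximation (as in the definition of $TC_{\cyc}$ given earlier in the section).

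Next, I would apply this to $\cY=\Phi^{\infty}\cX$ and $\cY=\cX$. Naturality of the equivalence in $\cY$ gives a commutative square
\[
\xymatrix{
TC_{\cyc}(\Phi^{\infty}\cX)\ar[r]^-{\simeq}\ar[d]&\bR F_{\cyc}(\cT_{0},R\Phi^{\infty}\cX)\ar[d]\\
TC_{\cyc}(\cX)\ar[r]_-{\simeq}&\bR F_{\cyc}(\cT_{0},R\cX)
}
\]
in which the horizontal maps are weak equivalences by corepresentability and the right-hand vertical map is the map induced by $\Phi^{\infty}\cX\to \cX$ in the homotopy category of precyclotomic spectra. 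By the previous proposition applied to $\cT=\cT_{0}$, this right-hand vertical map is a weak equivalence. Two-out-of-three then gives that the left-hand vertical map is a weak equivalence, which is what is asserted.

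The only real point to verify is that the functor $TC_{\cyc}$ as defined in this paper (namely $TC(R(-);p)$ with $R$ fibrant approximation in precyclotomic spectra) matches the functor corepresented by $\cT_{0}$ in \cite[6.7]{BM-cycl}; this is essentially the content of the compatibility statement built into the fibrant approximation convention, together with the identification in \cite[6.7]{BM-cycl}. The main obstacle, if any, is precisely this bookkeeping: one needs the corepresenting object $\cT_{0}$ to be cofibrant (or at least homotopically well-behaved) so that $\bR F_{\cyc}(\cT_{0},-)$ computes the derived mapping spectrum to which the previous proposition applies, and one needs $R\Phi^{\infty}\cX \to R\cX$ to represent the map $\Phi^{\infty}\cX\to \cX$ in the homotopy category. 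Both are standard consequences of the model structure on precyclotomic spectra from \cite{BM-cycl23} used throughout this section, so no new input is required beyond citing \cite[6.7]{BM-cycl} and invoking the preceding proposition.
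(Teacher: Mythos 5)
Your proof is correct and takes essentially the same route that the paper indicates (the paper gives no explicit proof beyond the sentence preceding the statement, which points to the sharper corepresentability result \cite[6.7]{BM-cycl}; your argument spells out exactly what that pointer means). One detail worth pinning down more carefully: when you apply the preceding proposition with $\cT=\cT_{0}$, the hypothesis you need is that $\cT_{0}$ is a \emph{cyclotomic} spectrum, not merely a cofibrant or homotopically well-behaved precyclotomic one. Your closing paragraph frames the remaining work as a cofibrancy/fibrant-approximation bookkeeping issue, but the real point is that the corepresenting object from \cite[6.7]{BM-cycl} must carry a genuine cyclotomic structure for the previous proposition to apply; it does, but you should say so explicitly rather than subsume it under ``homotopically well-behaved.''
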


\section{Descent for $TC^{\uR}$}\label{sec:descent}

In this section we study descent for $TC^{\cR}$ for a precyclotomic
base $\cR$.  If we assume that the underlying commutative ring 
precyclotomic spectrum of $\cR$ is cofibrant, then any smash power
$\cR^{(n)}$ of $\cR$ obtains the canonical structure of a
precyclotomic base. Moreover, the maps $\cR^{(m)}\to
\cR^{(n)}$ induced by (iterated) inclusions of the unit 
and (iterated) multiplication in the various factors are maps of
precyclotomic bases.  Then the usual ``Adams resolution'' cosimplicial
spectrum $\cR^{(\bullet+1)}$ is a cosimplicial
object in the category of precyclotomic bases and we can combine it with
$TC$ in various ways. 

We have in mind the case when $\cR=H\uFp$ (of Example~\ref{ex:Fp}) or
$\cR=\cMUC$ (of Example~\ref{ex:MU}) and the two theorems stated below apply
in particular to these cases.  In the
first theorem we need to assume that $\uR$ is $\aF_{p}$-connective,
which means that for every $n$, the fixed point spectrum $\uR^{C_{p^{n}}}$ is
connective (as a non-equivariant spectrum); by standard arguments,
this is equivalent to the assumption that $\Phi^{n}\uR$ is connective
(as a non-equivariant spectrum) for every $n$. 
In the second theorem, in addition to $\aF_{p}$-connectivity
of $\uR$, we also need a finite type hypothesis, and a condition on
$\pi_{0}$ called ``solid'' by 
Bousfield-Kan~\cite[I.4.5]{BousfieldKan} but (in light of emerging
terminology in the field) we call ``core'': we say that $\pi_{0}R$ is
\term{core} when the multiplication map
\[
\pi_{0}R\otimes_{\bZ}\pi_{0}R\to \pi_{0}R
\]
is an isomorphism.  More generally, we say that $\pi_{0}R$ is
\term{$p$-adically core} 
when this map becomes an isomorphism after applying the zeroth
left derived functor of $p$-completion.  We need $\pi_{0}R$ to be
$p$-adically core and also
$\pi_{0}(\Phi^{n}\uR)$ to be $p$-adically core for all $n$.

In the first descent theorem, we look at a commutative $R$-algebra
$A$. The multiplication $R^{(n)}\to R$
makes $A$ a commutative $R^{(n)}$-algebra for all $n$.  Moreover, the
maps of precyclotomic bases $\cR^{(m)}\to \cR^{(n)}$ induce maps on $TC$, 
\[
TC^{\cR^{(m)}}(A)\to TC^{\cR^{(n)}}(A).
\] 
We then have a cosimplicial object 
\[
T\supdot=TC^{\cR^{(\bullet+1)}}(A)
\]
with cofaces induced by inclusions of units and codegeneracies induced
by multiplication in the usual way (see the next section for this
functoriality of $TC$).  We prove the following theorem
below.

\begin{thm}\label{thm:descent}
Let $\cR$ be an $\aF_{p}$-connective precyclotomic base, whose
underlying commutative ring precyclotomic spectrum is cofibrant. Let
$A$ be a connective cofibrant commutative $R$-algebra.
The canonical map
\[
TC(A)\to \Tot( TC^{\cR^{(\bullet+1)}}(A))
\]
is a weak equivalence.
\end{thm}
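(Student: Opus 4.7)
The plan is to reduce the theorem to a descent statement for relative $\LTHH$ as a functor valued in precyclotomic spectra, and then establish convergence of the resulting cosimplicial totalization using the $\aF_{p}$-connectivity hypothesis on $\cR$. The first step uses that $TC_{\cyc}$, regarded as a functor on precyclotomic spectra, is representable (after appropriate fibrant replacement, cf.~\cite[6.8]{BM-cycl} and Section~\ref{sec:phiinfty}) as a derived mapping spectrum out of $\bS$ in precyclotomic spectra, and hence commutes with arbitrary homotopy limits, in particular with $\Tot$. Since $TC^{\cR^{(n+1)}}(A)=TC_{\cyc}(\LTHH^{\cR^{(n+1)}}(A))$ by definition, it then suffices to show that the canonical map
\[
\LTHH(A)\longrightarrow \Tot_{\bullet}\LTHH^{\cR^{(\bullet+1)}}(A)
\]
is an $\aF_{p}$-equivalence of precyclotomic spectra.

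The second step identifies the right-hand cosimplicial object as an Amitsur-style cobar complex. By Proposition~\ref{prop:relisdersma},
\[
\LTHH^{\cR^{(n+1)}}(A)\simeq \LTHH(A)\sma^{\bL}_{\LTHH(R^{(n+1)})}\cR^{(n+1)}.
\]
Combining this with the identifications $\LTHH(R^{(n+1)})\simeq \LTHH(R)^{\sma(n+1)}$ and $\cR^{(n+1)}=\cR^{\sma(n+1)}$ (using that $\LTHH$ sends smash products of commutative ring spectra to smash products), with cosimplicial codegeneracies induced by the multiplication $\cR\sma\cR\to\cR$ and cofaces induced by inclusions of the unit, the cosimplicial system is exactly the standard Amitsur cobar complex for the augmentation $\bS\to\cR$ internal to precyclotomic spectra, smashed with $\LTHH(A)$ over the corresponding cobar in $\LTHH$.

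The final and most substantive step is the convergence of $\Tot$. The $\aF_{p}$-connectivity of $\cR$, together with the cofibrancy hypothesis and the results of~\cite{BM-cycl23}, implies that for each $k\geq 0$ the categorical and geometric $C_{p^{k}}$-fixed points of $\cR^{(n+1)}$ remain connective for all $n$; combined with the connectivity of $A$, this yields for each $H\in\aF_{p}$ a cosimplicial spectrum $\LTHH^{\cR^{(\bullet+1)}}(A)^{H}$ which is degreewise connective and whose augmentation cofiber has connectivity tending to infinity linearly in cosimplicial degree. This is exactly the standard Bousfield--Kan convergence criterion, so $\Tot_{\bullet}$ correctly computes the augmentation at each fixed-point level, giving the desired $\aF_{p}$-equivalence. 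The main obstacle is the uniform control of these connectivity estimates across all fixed-point levels simultaneously; this is precisely what the $\aF_{p}$-connectivity hypothesis on $\cR$ provides. Without it the totalization would only compute an $\cR$-nilpotent completion of $\LTHH(A)$, and descent would fail.
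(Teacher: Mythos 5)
Your proposal correctly identifies the two main ideas the paper uses (reduce to an $\aF_{p}$-equivalence of $\Tot$s at the level of relative $THH$, then exploit connectivity for Bousfield--Kan convergence), but it handwaves at precisely the place where the paper does real work, and there is a genuine gap there.

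The issue is in your final step. You assert that knowing the categorical and geometric $C_{p^k}$-fixed points of $\cR^{(n+1)}$ are connective, together with connectivity of $A$, yields that each $\LTHH^{\cR^{(\bullet+1)}}(A)^{H}$ has augmentation cofiber connectivity tending to infinity linearly in cosimplicial degree. This does not follow, because categorical fixed points $(-)^{C_{p^k}}$ do not commute with smash products. The $q$th term of the cosimplicial object is (up to weak equivalence) a relative smash power $THH^{\cR}(A)^{\sma_{THH(A)}(q+1)}$, and connectivity of the fixed points of the individual factors gives you essentially nothing about the connectivity of the fixed points of the smash product. Applying a Bousfield--Kan convergence criterion directly at each fixed-point level therefore requires an input you have not produced. (Your first step has a subtler problem too: the corepresentability $TC_{\cyc}(\cX)\simeq \bR F^{\PreCyc}(\bS,\cX)$ in~\cite[6.8]{BM-cycl} is a $p$-complete statement, so arguing that $TC_{\cyc}$ commutes with $\Tot$ this way only gives a $p$-equivalence, not the weak equivalence claimed; the paper instead observes that $TC_{\cyc}$ is built out of $C_{p^n}$-fixed points, which commute with $\Tot$ on the nose.)

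The paper fills the gap you left by an induction on the fixed-point level $n$ driven by the isotropy-separation (norm) cofiber sequence
\[
(T\supdot)_{hC_{p^{n}}}\to (T\supdot)^{C_{p^{n}}}\to (\Phi\, T\supdot)^{C_{p^{n-1}}}.
\]
The base case $n=0$ is the non-equivariant statement, which does follow from the Adams-resolution connectivity estimate you have in mind (here one uses that $THH(A)\to THH^{\cR}(A)$ is a $1$-equivalence, so the normalized $E_1$-term is $2q$-connected in cosimplicial degree $q$). The base case controls the homotopy-orbit term because homotopy orbits behave well with respect to $\Tot$ under these connectivity bounds. The geometric-fixed-point term is handled by the inductive hypothesis applied to the descent problem for $\Phi\cR$ in place of $\cR$; this is exactly where the $\aF_{p}$-connectivity hypothesis on $\cR$ enters, since it guarantees that $\Phi^n\uR$ is again connective so the base-case argument applies at each stage. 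Without this inductive isotropy separation, there is no route from non-equivariant connectivity to control of $C_{p^n}$-fixed points of the relative smash powers, so your argument as written does not establish convergence of $\Tot$ at the fixed-point levels.
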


For the second descent theorem,
we start with an arbitrary associative ring orthogonal spectrum
$A$. We can then form $THH$ relative to $\cR^{(n)}$ of the
$R^{(n)}$-algebra $R^{(n)}\sma A$:
\[
THH^{\cR^{(n)}}(R^{(n)}\sma A)
\]
and the $TC$ relative to $\cR^{(n)}$:
\[
TC^{\cR^{(n)}}(R^{(n)}\sma A).
\]
The iterated unit and multiplication maps 
$\cR^{(m)}\to \cR^{(n)}$ induce maps 
\[
TC^{\cR^{(m)}}(R^{(m)}\sma A)\to TC^{\cR^{(n)}}(R^{(n)}\sma A)
\]
and in particular, we have the augmented cosimplicial object 
\[
T\supdot=TC^{\cR^{(\bullet+1)}}(R^{(\bullet+1)}\sma A).
\]
In different notation, this construction generalizes to the case when
$\cR$ is just a commutative ring precyclotomic spectrum and not
necessarily a precyclotomic base: we have an isomorphism of
cosimplicial objects
\[
TC^{\cR^{(\bullet+1)}}(R^{(\bullet+1)}\sma A)
:=TC_{\cyc}(THH^{\cR^{(\bullet+1)}}(R^{(\bullet+1)}\sma A))
\iso
TC_{\cyc}(\cR^{(\bullet+1)}\sma THH(A)).
\]
We prove the following theorem.

\begin{thm}\label{thm:descent2}
Let $\cR$ be an $\aF_{p}$-connective cofibrant commutative ring
precyclotomic spectrum such that $p$ is not a unit in $\pi_{0}R$.  We
assume that
\begin{enumerate}
\item $\pi_{0}(\Phi^{n}\uR)$ is $p$-adically core for
all $n\geq 0$ and 
\item the underlying non-equivariant spectrum of $\Phi^{n}\uR$ is finite
$p$-type for all $n$.
\end{enumerate}
Let
$A$ be a connective associative ring orthogonal spectrum that is
cofibrant as an associative ring orthogonal spectrum or as a
commutative ring orthogonal spectrum.
Then the canonical map
\[
TC(A)\to \Tot( TC_{\cyc}(\cR^{(\bullet+1)}\sma THH(A)))
\]
is a $p$-equivalence.
If $\cR$ is an $\aF_{p}$-connective precyclotomic base, then the
canonical map
\[
TC(A)\to \Tot( TC^{\cR^{(\bullet+1)}}(R^{(\bullet+1)}\sma A))
\]
is a $p$-equivalence.
\end{thm}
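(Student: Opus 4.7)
The plan is to reduce the precyclotomic-base form of the theorem to its companion statement via the identification
\[
TC^{\cR^{(\bullet+1)}}(R^{(\bullet+1)}\sma A)\iso TC_{\cyc}(\cR^{(\bullet+1)}\sma THH(A))
\]
already noted in the setup, so the task becomes showing that the canonical map
\[
TC(A)\to \Tot\bigl(TC_{\cyc}(\cR^{(\bullet+1)}\sma THH(A))\bigr)
\]
is a $p$-equivalence. The overall strategy is a two-step interchange: commute $TC_{\cyc}$ past $\Tot$, and then identify the inner $\Tot(\cR^{(\bullet+1)}\sma THH(A))$ as a model for the $p$-completion of $THH(A)$ in precyclotomic spectra. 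Running $TC_{\cyc}$ on this $p$-completion and invoking connectivity of $THH(A)$ (which follows from connectivity of $A$) then identifies the right-hand side with $TC(A)\phat$, and the composite with $TC(A)\to TC(A)\phat$ gives the desired $p$-equivalence.

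For the interchange step, I would use that $TC_{\cyc}$ is by construction a sequential homotopy limit of the categorical fixed point spectra $\uX^{C_{p^n}}$ along the restriction and Frobenius maps, and homotopy limits commute with $\Tot$. The fibrant replacement embedded in the definition of $TC_{\cyc}$ handles the necessary strictification, provided the cosimplicial object admits compatible termwise fibrant replacements in precyclotomic spectra, which the model structure of~\cite{BM-cycl23} supplies.

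For identifying the inner $\Tot$ as a $p$-completion, the plan is to apply the classical Bousfield-Kan $R$-nilpotent resolution argument, but run it separately at each iterate of the geometric fixed point functor. Because $\cR$ is cofibrant, $\uR$ lies in the class $\EQC$ of Section~\ref{sec:equivariant}, so $\Phi^n$ distributes over the smash product termwise, giving
\[
\Phi^n(\cR^{(\bullet+1)}\sma THH(A))\simeq (\Phi^n\uR)^{(\bullet+1)}\sma \Phi^n THH(A).
\]
The hypotheses that $\pi_0(\Phi^n\uR)$ is $p$-adically core and that $\Phi^n\uR$ has finite $p$-type are precisely the conditions of the Bousfield-Kan convergence theorem that guarantee, for any connective spectrum $Y$, that $\Tot((\Phi^n\uR)^{(\bullet+1)}\sma Y)\simeq Y\phat$. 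Applying this with $Y=\Phi^n THH(A)$ and reassembling across all $n\geq 0$ yields the identification at the level of precyclotomic spectra.

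The main obstacle will be assembling the $\Phi^n$-wise Bousfield-Kan identifications coherently into a single weak equivalence in precyclotomic spectra, and simultaneously controlling the interchange of the sequential $TC_{\cyc}$-limit with $\Tot$. The cofibrancy and $\aF_p$-connectivity of $\cR$, together with the finite $p$-type and $p$-adically core hypotheses on each $\Phi^n\uR$, are exactly what is needed to force the relevant Milnor $\lim^1$ obstructions to vanish uniformly in $n$ and to keep connectivity under control through the cobar construction computing $\Tot$.
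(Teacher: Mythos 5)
Your high-level plan — commute the $TC_{\cyc}$-limit past $\Tot$, and use Bousfield–Kan convergence at each iterate of the geometric fixed point functor — is pointed in the right direction, and the roles you assign to the finite $p$-type and $p$-adically core hypotheses are correct. But the phrase ``reassembling across all $n\geq 0$ yields the identification at the level of precyclotomic spectra'' glosses over the heart of the argument, which is a genuine gap. Knowing that, for each $n$, the cosimplicial object $\Phi^n(\cR^{(\bullet+1)}\sma THH(A))\simeq (\Phi^n\uR)^{(\bullet+1)}\sma \Phi^n THH(A)$ totalizes to $(\Phi^n THH(A))\phat$ is a statement about geometric fixed points; the thing you actually need to control is the categorical fixed points $(-)^{C_{p^n}}$ applied to $\Tot$, since $TC_{\cyc}$ is built from those. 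There is no formal implication from the former to the latter: a precyclotomic spectrum whose geometric fixed points are all $p$-complete need not have $p$-complete categorical fixed points, and, more to the point, $p$-completion ``in precyclotomic spectra'' is not a notion that comes for free or that the paper ever invokes.

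What the paper does instead, after commuting $(-)^{C_{p^n}}$ past $\Tot$, is run an induction on $n$ using the norm cofiber sequence
\[
(T\supdot)_{hC_{p^{n}}}\to (T\supdot)^{C_{p^{n}}}\to (\Phi T\supdot)^{C_{p^{n-1}}}.
\]
The base case $n=0$ is your Bousfield–Kan convergence statement. For the inductive step, one needs both that homotopy orbits commute with $\Tot$ (this is where the connectivity of the cosimplicial filtration on $\Tot$ enters — not merely $\lim^1$-vanishing, but a quantitative coconnectivity estimate for the $q$th fiber of the filtration) and that the third term is handled by the inductive hypothesis with $\cR$ replaced by $\Phi\cR$ (which is why you need the hypotheses to hold for all $\Phi^n\uR$, not just $\uR$). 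You have identified all the right raw ingredients, but the norm cofiber sequence induction is the mechanism that converts your $\Phi^n$-wise convergence statements into the desired $p$-equivalence of $C_{p^n}$-fixed points, and without it the proposal does not close.
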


\begin{proof}[Proof of Theorems~\ref{thm:descent}
and~\ref{thm:descent2}]

We note that the $TC_{\cyc}$ construction is a homotopy limit of $C_{p^{n}}$-fixed
points and $C_{p^{n}}$-fixed points commute with $\Tot$; thus, it
suffices to show that the maps
\begin{gather*}
THH(A)\to \Tot(THH^{\cR^{(\bullet+1)}}(A))\\
THH(A)\to \Tot(\cR^{(\bullet+1)}\sma THH(A))
\end{gather*}
induce a weak equivalence or $p$-equivalence on $C_{p^{n}}$
fixed point spectra for all $n\geq 0$ in the respective cases for
Theorems~\ref{thm:descent} and~\ref{thm:descent2}.  We argue by
induction on $n$.  To write a uniform argument, we let
$T\supdot$ denote either of the cosimplicial objects and prove
``$\pi$-local equivalences'' where $\pi=\bZ$ in the case of 
Theorem~\ref{thm:descent} and $\pi=\bZ/p$ in the case of
Theorem~\ref{thm:descent2}, so that a $\pi$-local equivalence means
weak equivalence in the former case and $p$-equivalence in the latter
case.

For Theorem~\ref{thm:descent2}, the base case $n=0$ follows from
standard results on convergence of the Adams spectral sequence.
For Theorem~\ref{thm:descent}, it is useful to note that 
\begin{multline*}
THH^{\cR^{(q+1)}}(A)\iso THH^{\cR}(A)\sma_{THH(A)}\dotsb\sma_{THH(A)}THH^{\cR}(A)
\\=THH^{\cR}(A)^{\sma_{THH(A)}(q+1)}
\end{multline*}
and the cosimplicial object is the Adams resolution in the category of $THH(A)$-modules 
of $THH(A)$ by the
$THH(A)$-algebra $THH^{\cR}(A)$. The hypothesis that $R$ and $A$ are connective
(and commutative) implies that the map $THH(A)\to THH^{\cR}(A)$ is a 
$1$-equivalence and from here the base case $n=0$ follows from standard
arguments: the normalized $E_{1}$-term for the homotopy group spectral
sequence is $2q$-connected in cosimplicial degree $q$, which
implies that the $q$th fiber of the cosimplicial filtration on $\Tot$
is $q$-connected.  This implies that smashing over $THH(A)$ with
$THH^{\cR}(A)$ commutes with $\Tot$, and the map of cosimplicial
objects 
\[
THH^{\cR}(A)\iso THH^{\cR}(A)\sma_{THH(A)}THH(A)\to THH^{\cR}(A)^{\sma_{THH(A)}(1+\bullet+1)}
\]
(for the constant cosimplicial object $THH^{\cR}(A)$)
is a cosimplicial homotopy equivalence.

The base case shows that the map is a non-equivariant $\pi$-local
equivalence and implies that the induced map of homotopy orbits
\begin{equation}\label{eq:orbits}
THH(A)_{hC_{p^{n}}}
\overto{\simeq_{\pi}} \Tot(T\supdot)_{hC_{p^{n}}}
\end{equation}
is a (non-equivariant) $\pi$-local equivalence in the respective
cases.  Moreover, connectivity of the fibers in the cosimplicial
filtration implies that homotopy orbits commute with $\Tot$,
\[
\Tot\bigl(T\supdot\bigr)_{hC_{p^{n}}}\overto{\simeq} 
\Tot\bigl(T\supdot_{hC_{p^{n}}}\bigr).
\]
Consider the norm cofiber sequence
\[
(T\supdot)_{hC_{p^{n}}}\to 
(T\supdot)^{C_{p^{n}}}\to 
(\Phi T\supdot)^{C_{p^{n-1}}}\to \Sigma \dotsb.
\]
Commuting fixed points and homotopy orbits with $\Tot$, up
to weak equivalence, we get a cofiber sequence
\[
\Tot(T\supdot)_{hC_{p^{n}}}\to 
\Tot(T\supdot)^{C_{p^{n}}}\to 
\Tot(\Phi(T\supdot))^{C_{p^{n-1}}}\to \Sigma \dotsb
\]
and compatible maps from the cofiber sequence
\[
THH(A)_{hC_{p^{n}}}\to THH(A)^{C_{p^{n}}}\to (\Phi
THH(A))^{C_{p^{n-1}}}\to \Sigma \dotsb.
\]
By~\eqref{eq:orbits} the map in the first position is a $\pi$-local
equivalence, and by induction the map in the third position is a
$\pi$-local equivalence: we can identify it up to weak equivalence as
the induced map on $C_{p^{n-1}}$ fixed points of the corresponding
descent problem with $\cR$ replaced by $\Phi \cR$.  It follows that
the map in the middle position is a 
$\pi$-local equivalence.  This completes the induction.
\end{proof}

\section{Functoriality of relative $TC$ in the precyclotomic base}\label{sec:TCfunc}

This section is devoted to studying the functoriality of $TC^{\cR}(A)$
in $\cR$ as well as~$A$.

Let $\cR$ be a precyclotomic base.  Then, as observed in the proof of
Theorem~\ref{thm:precycstructure}, the precyclotomic base structure
gives a homotopy that makes the diagram
\[
\xymatrix{%
\Phi THH(R)\ar[r]\ar[d]_{r_{THH(R)}}&\Phi \uR\ar[d]^{r_{\cR}}\\
THH(R)\ar[r]&\uR
}
\]
commute in the category of $\bT$-equivariant commutative ring
orthogonal spectra.  We write 
\begin{align*}
f_{0}&\colon \Phi THH(R)\to \Phi \uR\to \uR\\
f_{1}&\colon \Phi THH(R)\to THH(R)\to \uR\\
F&\colon \Phi THH(R)\otimes I\to \uR
\end{align*}
for the right-then-down composite, the down-then-right composite, and
the  homotopy, respectively.  We write $f_{i}^{*}\uR$ and $F^{*}\uR$ for
the $\bT$-equivariant commutative $\Phi THH(R)$- and $(\Phi THH(R)\otimes
I)$-algebra structures on $\uR$ induced by $f_{i}$ and $F$.  We use the
notation 
\[
c\colon THH(R)\to \uR
\]
for the canonical map of Definition~\ref{defn:eqrelthh}.

For the functoriality used in Theorem~\ref{thm:descent}, even when $A$
is cofibrant as an $R$-algebra, we cannot expect $A$ to be cofibrant
as an $R^{(n)}$-algebra for all $n$, and so we need to use a point-set
model for the derived functor of $THH^{\uR}(A)$ that requires minimal
cofibrancy on $A$.  

First we need to ensure that $THH(A)$ has the right $\bT$-equivariant
($\aF_{\fin}$-colocal) homotopy type.  For this, it suffices to assume
that $A$ is flat for the smash product in orthogonal spectra (meaning
that $A\sma (-)$ preserves all weak equivalences of orthogonal
spectra); moreover, when $A$ is flat for the smash product in
orthogonal spectra, $THH(A)$ is flat for the smash product in
$\bT$-equivariant orthogonal spectra.  To get the cyclotomic
structure map on $THH(A)$, the construction of~\cite[4.2]{ABGHLM}
requires an additional hypothesis; for this, it suffices that the
underlying orthogonal spectrum of $A$ be
in the (non-equivariant) class $\nEQC$ discussed in
Section~\ref{sec:equivariant}.   To ensure both that $A$ is flat for
the smash product in orthogonal spectra and that the underlying
orthogonal spectrum of $A$ is in the class $\nEQC$, it is enough that $A$ be a
cofibrant $R$-algebra for some commutative ring orthogonal
spectrum $R$ that is cofibrant in the standard model structure or 
the model structure of~\cite[III.4.1]{BM-cycl23}.  

Next we need a point-set model for the derived smash product
$THH(A)\sma_{THH(R)}\uR$.  Let $\uM$ be a $\bT$-equivariant orthogonal
spectrum and $\uR$ be a cofibrant $\bT$-equivariant commutative ring
orthogonal spectrum in the model structure of~\cite[Thm.~C]{BM-cycl23}.  If
$\uR'$ is a cofibrant commutative $\uR$-algebra, then $\uM\sma_{\uR}\uR'$ represents the derived smash product.  The
structure of a commutative $\uR$-algebra $\uR'$ is just a map of
$\bT$-equivariant commutative ring orthogonal spectra $g\colon \uR\to \uR'$
and $\uR'$ is a cofibrant commutative $\uR$-algebra exactly when this
map is a cofibration.  In the case when $\uR'$ is not necessarily
cofibrant as a commutative $\uR$-algebra but is cofibrant as a
$\bT$-equivariant commutative ring orthogonal spectrum, we can use the
mapping cylinder construction to produce a homotopy equivalent
cofibrant commutative $\uR$-algebra.  Let 
\[
Ig=(\uR\otimes I)\cup_{(\uR\otimes\{1\})}\uR'
\]
with the pushout done in the category of $\bT$-equivariant commutative
ring orthogonal spectra (i.e., ``$\cup$''$=\sma$), gluing along the
given map $g$:
\[
\uR\otimes\{1\}\iso \uR\overto{g}\uR'.
\]
Returning to $THH^{\uR}(A)$, when $\uR$ is a cofibrant
$\bT$-equivariant commutative ring orthogonal spectrum, $Ic\to \uR$ is
a cofibrant approximation in the category of commutative
$THH(R)$-algebras, and when in addition the underlying orthogonal
spectrum of the $R$-algebra $A$ is flat
for the smash product in orthogonal spectra,
$THH(A)\sma_{THH(R)}Ic$ then represents the derived functor
$THH^{\uR}(A)$. 

Assume the underlying orthogonal spectra of $R$ and $A$ are in the
(non-equivariant) class $\nEQC$ of Section~\ref{sec:equivariant}
so that $THH(R)$ and $THH(A)$ have cyclotomic structures and consider
the following zigzag of maps. 
\[ \small
\xymatrix@C-5pc{%
\Phi (THH(A)\sma_{THH(R)}Ic)\ar@{<-}[d]
&&\relax\hspace{-4em}THH(A)\sma_{THH(R)}Ic
\\
\Phi THH(A)\sma_{\Phi THH(R)}I(\Phi c)\ar@{.>}[d]
&\hspace{6em}\Phi THH(A)\sma_{\Phi THH(R)\otimes I}IF
\hspace{6em}
\\
\Phi THH(A)\sma_{\Phi THH(R)}If_{0}\ar[ur]
&&\hspace{-4em}
\Phi THH(A)\sma_{\Phi THH(R)}If_{1}
  \ar@<1em>@{-->}[uu]\ar[ul]
}
\]
When $\uR$ is cofibrant as a $\bT$-equivariant commutative ring
orthogonal spectrum in the model
structure of~\cite[Thm.~C]{BM-cycl23}, $THH(R)$, $\Phi THH(R)$, and $\Phi \uR$
are also cofibrant. By the discussion above, when in addition $A$ is
flat for the smash product in orthogonal spectra, 
then the solid arrows are weak equivalences and the dashed arrow is an
$\aF_{\fin}$-equivalence.  When in addition $\cR$
is cyclotomic, the dotted arrow is an $\aF_{p}$-equivalence.  This
zigzag motivates the following definition.

\begin{defn}
We define the category of zigzag-cyclotomic spectra as follows.  An
object consists of $\bT$-equivariant orthogonal spectra
$\uX,\uX_{1},\uX_{2},\uX_{3}$ and $\bT$-equivariant maps
\[
\Phi \uX\overfrom{r_{3}} \uX_{3}\overto{r_{2}} \uX_{2}\overfrom{r_{1}} \uX_{1}\overto{r_{0}} \uX.
\]
A map of zigzag-cyclotomic spectra $\cf\colon \cX\to \cY$ consists of maps of
$\bT$-equivariant orthogonal spectra $f,f_{1},f_{2},f_{3}$ as pictured,
making the following diagram commute.
\[
\xymatrix{%
\Phi \uX\ar@{<-}[r]^{r_{3}}\ar[d]_{\Phi f}
&\uX_{3}\ar[r]^{r_{2}}\ar[d]^{f_{3}}
&\uX_{2}\ar@{<-}[r]^{r_{1}}\ar[d]^{f_{2}}
&\uX_{1}\ar[r]^{r_{0}}\ar[d]^{f_{1}}
&\uX\ar[d]^{f}\\
\Phi \uY\ar@{<-}[r]_{r_{3}}
&\uY_{3}\ar[r]_{r_{2}}
&\uY_{2}\ar@{<-}[r]_{r_{1}}
&\uY_{1}\ar[r]_{r_{0}}&\uY
}
\]
For $\aF$ any family of subgroups of $\bT$, an $\aF$-equivalence is a
map where each of $f,f_{1},f_{2},f_{3}$ is an $\aF$-equivalence.
\end{defn}

In order to do a $TC$ construction, we need an $\Omega$-spectrum
replacement functor in this category. For this it suffices to have an
$\Omega$-spectrum replacement functor $R$ (or more precisely, functor
$R$ and natural transformation $\eta\colon \Id\to R$) in the category of
$\bT$-equivariant orthogonal spectra that comes with a natural
transformation $\theta \colon \Phi R\to R\Phi$ that makes the following diagram
commute.
\[
\xymatrix@C-1pc{%
&\Phi \uX\ar[dl]_{\Phi \eta_{\uX}}\ar[dr]^{\eta_{\Phi \uX}}\\
\Phi R\uX\ar[rr]_{\theta_{\uX}}&&R\Phi \uX
}
\]
We call such a structure a $\Phi$-compatible $\Omega$-spectrum
replacement functor.  Theorem~4.7 of~\cite{BM-tc} and its proof assert
the existence of such functors (and construct two).  We now choose and
fix a $\Phi$-compatible $\Omega$-spectrum replacement functor $R$.

We construct a functor $TC_{z}$ from zigzag-cyclotomic spectra to
orthogonal spectra as follows.

\begin{cons}
For a zigzag-cyclotomic spectrum $\cX$, define $TR_{z}(\cX)$ to be the
homotopy limit (constructed via the Bousfield-Kan cobar construction
or mapping microscope) of the following diagram.
\[\small
\xymatrix@+1pc{%
&(R\uX_{3})^{C_{p^{n}}}\ar[dr]|-{(Rr_{2})^{C_{p^{n}}}}
&(R\uX_{1})^{C_{p^{n}}}\ar[dr]|-{\ \ (Rr_{0})^{C_{p^{n}}}}&&\cdots\\
\relax\cdots\ar[r]
&(R\Phi \uX)^{C_{p^{n}}}\ar@{<-}[u]|-{(Rr_{3})^{C_{p^{n}}}}
&(R\uX_{2})^{C_{p^{n}}}\ar@{<-}[u]|-{(Rr_{1})^{C_{p^{n}}}}
&(R\uX)^{C_{p^{n}}}\ar[r]
&(R\Phi \uX)^{C_{p^{n-1}}}\ar@{<-}[u]|-{(Rr_{3})^{C_{p^{n-1}}}}
}
\]
where the unlabeled map $(R\uX)^{C_{p^{n}}}\to (R\Phi \uX)^{C_{p^{n-1}}}$
(for each $n\geq 1$) is the composite
\[
(R\uX)^{C_{p^{n}}}\iso 
(\rho^{*} (R\uX)^{C_{p}})^{C_{p^{n-1}}}\to
(\Phi R\uX)^{C_{p^{n-1}}}\to
(R\Phi \uX)^{C_{p^{n-1}}}
\]
induced by the canonical map from the categorical fixed points to the geometric
fixed points and the $\Phi$-compatibility structure of $R$.
Naturality of the inclusion of fixed points map $\mathrm{F}$ implies
that the diagrams
\[ \small
\xymatrix{%
(R\Phi \uX)^{C_{p^{n}}}\ar@{<-}[r]^(.55){(Rr_{3})^{C_{p^{n}}}}
\ar[d]_-{\mathrm F}
&(R\uX_{3})^{C_{p^{n}}}\ar[r]^(.55){(Rr_{2})^{C_{p^{n}}}}
\ar[d]_-{\mathrm F}
&(R\uX_{2})^{C_{p^{n}}}\ar@{<-}[r]^(.55){(Rr_{1})^{C_{p^{n}}}}
\ar[d]_-{\mathrm F}
&(R\uX_{1})^{C_{p^{n}}}\ar[r]^(.55){(Rr_{0})^{C_{p^{n}}}}
\ar[d]_-{\mathrm F}
&(R\uX)^{C_{p^{n}}}
\ar[d]_-{\mathrm F}
\\
(R\Phi \uX)^{C_{p^{n-1}}}\ar@{<-}[r]_(.45){(Rr_{3})^{C_{p^{n-1}}}}
&(R\uX_{3})^{C_{p^{n-1}}}\ar[r]_(.45){(Rr_{2})^{C_{p^{n-1}}}}
&(R\uX_{2})^{C_{p^{n-1}}}\ar@{<-}[r]_(.45){(Rr_{1})^{C_{p^{n-1}}}}
&(R\uX_{1})^{C_{p^{n-1}}}\ar[r]_(.45){(Rr_{0})^{C_{p^{n-1}}}}
&(R\uX)^{C_{p^{n-1}}}
}
\]
and
\[ \small
\xymatrix{%
(R\uX)^{C_{p^{n+1}}}\ar[r]
\ar[d]_-{\mathrm F}
&(R\Phi \uX)^{C_{p^{n}}}
\ar[d]_-{\mathrm F}
\\
(R\uX)^{C_{p^{n}}}\ar[r]
&(R\Phi \uX)^{C_{p^{n-1}}}
}
\]
commute and so $\mathrm F$ induces a self-map $\mathrm F$ of
$TR_{z}(\cX)$.  Define $TC_{z}(\cX)$ to be the homotopy equalizer of
$\mathrm{F}$ and the identity on $TR_{z}(\cX)$.
\end{cons}

The following is clear from construction.

\begin{prop}\label{prop:TCzisderived}
If $\cX\to \cY$ is an $\aF_{p}$-equivalence of zigzag-cyclotomic
spectra, then the induced maps on $TR_{z}$ and $TC_{z}$ are weak
equivalences. 
\end{prop}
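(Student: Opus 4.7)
The plan is to reduce the assertion to objectwise weak equivalence of the diagrams whose homotopy limits and homotopy equalizers compute $TR_{z}$ and $TC_{z}$, then invoke the standard invariance of homotopy limits and homotopy equalizers under objectwise weak equivalences. Concretely, it suffices to show that for each $n\geq 0$ the induced maps
\[
(R\uX)^{C_{p^n}} \to (R\uY)^{C_{p^n}}, \qquad (R\uX_i)^{C_{p^n}} \to (R\uY_i)^{C_{p^n}} \ (i=1,2,3),
\]
together with the maps $(R\Phi\uX)^{C_{p^n}} \to (R\Phi\uY)^{C_{p^n}}$, are weak equivalences of non-equivariant orthogonal spectra.

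For the first four families, since $R$ is an $\Omega$-spectrum replacement functor, for any $\bT$-equivariant orthogonal spectrum $\uZ$ there is a natural isomorphism $\pi_{*}((R\uZ)^{C_{p^n}}) \cong \pi^{C_{p^n}}_{*}(\uZ)$. The hypothesis that $\cf\colon \cX\to\cY$ is an $\aF_{p}$-equivalence means that $f,f_{1},f_{2},f_{3}$ each induce isomorphisms on $\pi^{H}_{*}$ for every $H\in\aF_{p}$, in particular for $H=C_{p^n}$, so these four families of maps are weak equivalences. For the $\Phi$-term, I use the $\Phi$-compatibility natural transformation $\theta\colon \Phi R\to R\Phi$: the commutative triangle relating $\theta$ with $\Phi\eta$ and $\eta_{\Phi(-)}$ together with the fact that $\eta_{\Phi\uZ}$ is a weak equivalence shows that $\theta_{\uZ}$ identifies $R\Phi\uZ$ up to weak equivalence with $\Phi R\uZ$. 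On the fibrant objects $R\uX$ and $R\uY$, the point-set functor $\Phi$ realizes the derived geometric fixed point functor $\bL\Phi$, and $\bL\Phi$ preserves $\aF_{p}$-equivalences; since $Rf$ is an $\aF_{p}$-equivalence, $\Phi Rf$ is as well, and therefore so is $R\Phi f$. Its $C_{p^n}$-fixed points are then weak equivalences for every $n$.

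Assembling these objectwise weak equivalences, $TR_{z}(\cf)$ is a homotopy limit of objectwise weak equivalences and hence a weak equivalence. The self-map $\mathrm{F}$ is natural in the zigzag-cyclotomic spectrum, so $TR_{z}(\cf)$ intertwines the pairs of self-maps $(\mathrm{F},\id)$ on $TR_{z}(\cX)$ and $TR_{z}(\cY)$, and passing to the homotopy equalizer yields that $TC_{z}(\cf)$ is a weak equivalence. The main (modest) obstacle is the $(R\Phi\uX)^{C_{p^n}}$ case, which is precisely the point at which the $\Phi$-compatibility of $R$ is essential; once that identification is in hand, the remainder of the argument is formal.
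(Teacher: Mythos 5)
The overall strategy — reduce to objectwise weak equivalence of the diagrams computing $TR_z$ and $TC_z$, then invoke invariance of homotopy limits and homotopy equalizers — is the right one and is presumably what the paper means by ``clear from construction.'' You correctly isolate the only non-formal step: the terms $(R\Phi\uX)^{C_{p^n}}$. However, your argument for that step has two concrete gaps.

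First, the triangle $\theta_{\uZ}\circ\Phi\eta_{\uZ}=\eta_{\Phi\uZ}$ does not yield that $\theta_{\uZ}$ is a weak equivalence. The two-out-of-three argument you have in mind requires that $\Phi\eta_{\uZ}$ be a weak equivalence, and that is exactly the kind of statement that cannot be taken for granted: the point-set functor $\Phi=\rho^{*}\Phi^{C_p}$ does \emph{not} preserve $\aF_{p}$-equivalences (or even levelwise equivalences) in general, which is precisely why $\bL\Phi$ exists as a separate object. Second, and more seriously, the assertion that ``on the fibrant objects $R\uX$ and $R\uY$ the point-set functor $\Phi$ realizes $\bL\Phi$'' has the variance backwards. $\bL\Phi$ is a \emph{left} derived functor, computed by applying $\Phi$ to a \emph{cofibrant} approximation; there is no general reason that $\Phi$ applied to an $\Omega$-spectrum (fibrant) replacement computes $\bL\Phi$, and the properties of $R$ spelled out in the text (it is an $\Omega$-spectrum replacement functor equipped with a compatible $\theta$) do not entail it.

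What is actually required to close the argument is that $\Phi f$ be an $\aF_{p}$-equivalence, so that $(R\Phi f)^{C_{p^n}}$ is a weak equivalence for all $n$; equivalently, some additional feature of the specific $\Phi$-compatible $\Omega$-spectrum replacement functor constructed in the cited Theorem~4.7 of the companion work must be invoked — for instance that $\theta$ is a natural $\aF_{p}$-equivalence and that $\Phi R$ preserves $\aF_{p}$-equivalences (or equivalently that $R$ factors through a cofibrant approximation on which $\Phi$ is homotopical). You have identified the correct pressure point, but the justification supplied does not withstand scrutiny; it needs to be replaced by an explicit appeal to the stated properties of the chosen $R$ rather than to a general assertion about $\Phi$ on fibrant objects.
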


To compare $TC_{\cyc}$ and $TC_{z}$, we use the following functor from
precyclotomic spectra to zigzag-cyclotomic spectra.

\begin{defn}
Given a precyclotomic spectrum $\cX$, let $z\cX$ be the zigzag-cyclotomic spectrum
\[
\Phi \uX\overfrom{=}\Phi \uX\overto{r_{\cX}} \uX\overfrom{=}\uX\overto{=}\uX.
\]
\end{defn}

\begin{thm}\label{thm:compTCz}
There is a zigzag of natural weak equivalences connecting
$TC_{\cyc}(\cX)$ and $TC_{z}(z\cX)$.
\end{thm}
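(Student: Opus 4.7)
The plan is to identify $TR_{z}(z\cX)$ with the standard $TR_{\cyc}(\cX)$ tower via a natural zigzag of weak equivalences; then a parallel identification of the $\mathrm F$ self-maps will pass to the homotopy equalizers defining $TC_{z}$ and $TC_{\cyc}$.

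First, I would unfold the defining diagram of $TR_{z}(z\cX)$ in the special case that three of the four structural maps in the zigzag are identities: for $z\cX$ the level-$n$ segment reads
\[
(R\Phi \uX)^{C_{p^{n}}} \xleftarrow{=} (R\Phi \uX)^{C_{p^{n}}} \xrightarrow{Rr_{\cX}^{C_{p^{n}}}} (R\uX)^{C_{p^{n}}} \xleftarrow{=} (R\uX)^{C_{p^{n}}} \xrightarrow{=} (R\uX)^{C_{p^{n}}}
\]
and the connecting map to level $n{-}1$ is the composite
\[
(R\uX)^{C_{p^{n}}}\iso (\rho^{*}(R\uX)^{C_{p}})^{C_{p^{n-1}}}\to (\Phi R\uX)^{C_{p^{n-1}}}\to (R\Phi \uX)^{C_{p^{n-1}}}
\]
built from the canonical fixed-point-to-geometric-fixed-points map and the $\Phi$-compatibility data on $R$.

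Second, I would invoke a standard cofinality/collapse argument: in a homotopy limit, an arrow which is an identity imposes no new constraint, so one may pass to the subdiagram obtained by contracting each identity arrow without changing the homotopy limit up to natural weak equivalence (this can be implemented concretely at the level of Bousfield--Kan cobar constructions). After collapsing, the remaining diagram is precisely the standard tower
\[
\cdots\to (R\uX)^{C_{p^{n+1}}}\xrightarrow{\mathrm R}(R\uX)^{C_{p^{n}}}\xrightarrow{\mathrm R}\cdots
\]
by the tautological observation that the composite of the connecting map with $Rr_{\cX}^{C_{p^{n-1}}}$ is exactly the standard $\mathrm R$ map used in defining $TR_{\cyc}(\cX)$ (the map into geometric fixed points followed by the precyclotomic structure map). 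This produces a natural zigzag of weak equivalences $TR_{z}(z\cX)\simeq TR_{\cyc}(\cX)$.

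Third, the inclusion-of-fixed-points self-maps $\mathrm F$ are manifestly compatible with the collapse, since in both constructions $\mathrm F$ is induced by the restriction $(R\uX)^{C_{p^{n+1}}}\to (R\uX)^{C_{p^{n}}}$ and the corresponding restriction on $(R\Phi \uX)^{C_{p^{n}}}$; naturality of $\mathrm F$ then gives a compatible zigzag of weak equivalences on homotopy equalizers, yielding $TC_{\cyc}(\cX)\simeq TC_{z}(z\cX)$. The main obstacle is making the collapse rigorous when realized through two potentially different fibrant replacement conventions: the $TC_{\cyc}$ construction fibrantly replaces in precyclotomic spectra, while $TR_{z}$ uses the chosen $\Phi$-compatible $\Omega$-spectrum replacement of Section~\ref{sec:TCfunc}. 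A small auxiliary zigzag comparing any two such $\Phi$-compatible fibrant models (as in Theorem~4.7 of \cite{BM-tc}) handles this, and the rest of the argument is essentially formal bookkeeping with homotopy limits of diagrams containing many identity arrows.
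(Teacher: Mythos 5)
Your proposal follows essentially the same strategy as the paper's proof: identify $TR_{z}(z\cX)$ with an $\mathrm R$-tower $TR_{\cyc}(\cX)$ by collapsing the identity arrows in the zigzag diagram, observe that the $\mathrm F$ self-maps agree, and pass to homotopy equalizers. The paper implements the ``collapse'' step concretely by viewing the relevant homotopy limit as an iterated homotopy pullback, observing that in the case of $z\cX$ all the non-diagonal legs are identities, and then using that the point-set limit (which is just $(R\uX)^{C_{p^n}}$) maps to the homotopy limit by a homotopy equivalence; this is the rigorous version of your ``cofinality/collapse'' claim, and it is worth knowing that you should not appeal to cofinality per se (identities are not a cofinal subcategory) but to the iterated-pullback/homotopy-pullback comparison.

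There is, however, one real step you elide. The construction $TC_{\cyc}(\cX)$ is defined via the $TC(-;p)$ construction using both $\mathrm R$ and $\mathrm F$ self-maps, not as a homotopy equalizer of $\mathrm F$ and the identity on $TR_{\cyc}(\cX)$; meanwhile $TC_{z}$ \emph{is} defined as the homotopy equalizer of $\mathrm F$ and the identity on $TR_{z}$. Bridging this requires the observation that $TC_{\cyc}(\cX)$ is naturally weakly equivalent to the homotopy equalizer of $\mathrm F$ and $\mathrm R$ on $TR_{\cyc}(\cX)$, and that the self-map of $TR_{\cyc}(\cX)$ induced by $\mathrm R$ is naturally homotopic to the identity (a standard but not automatic fact about sequential homotopy limits along $\mathrm R$). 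Your proposal passes directly from the $TR$-level comparison to the $TC$-level comparison without making this reduction explicit, and that is where the ``essentially formal bookkeeping'' sentence glosses over genuine content. By contrast, your concern about comparing fibrant replacement conventions is handled the same way in the paper (a one-line assertion of natural weak equivalence); both $TR_{z}$ and $TR_{\cyc}$ in the proof already use the fixed $\Phi$-compatible $\Omega$-spectrum replacement $R$, so this is a smaller issue than you make it.
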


\begin{proof}
Let $TR_{\cyc}(\cX)$ be the homotopy limit of $(R\uX)^{C_{p^{n}}}$
under the $\mathrm R$ maps
\begin{multline*}
\mathrm{R}\colon 
(R\uX)^{C_{p^{n}}}\iso (\rho^{*}(R\uX)^{C_{p}})^{C_{p^{n-1}}}
\to (\Phi R\uX)^{C_{p^{n-1}}}\\\overto{\theta^{C_{p^{n-1}}}}
(R\Phi\uX)^{C_{p^{n-1}}}
\overto{Rr_{\cX}^{C_{p^{n-1}}}}(R\uX)^{C_{p^{n-1}}}.
\end{multline*}
Then $TC_{\cyc}(\cX)$ is naturally weakly equivalent to the homotopy
equalizer of the self-maps induced by $\mathrm{F}$ and $\mathrm{R}$ on
$TR_{\cyc}(\cX)$, and since the self-map of $TR_{\cyc}(\cX)$ induced
by $\mathrm{R}$ is naturally homotopic to the identity, $TC_{\cyc}$ is
naturally weakly equivalent to the homotopy equalizer of the identity
and the self-map induced by $\mathrm{F}$ on $TR_{\cyc}(\cX)$.  We
defined $TC_{z}(z\cX)$ as the homotopy equalizer of the identity and
the self-map induced by $\mathrm{F}$ on $TR_{z}(z\cX)$, and so it
suffices to construct a natural weak equivalence $TR_{\cyc}(\cX)\to
TR_{z}(z\cX)$ that is compatible up to natural homotopy with
$\mathrm{F}$.  

For the diagram defining $TR_{z}$, consider the subdiagram that
consists of $(R\uX)^{C_{p^{n}}}$ and everything to the right of it as
displayed above
\[ \small
\xymatrix@-1pc{%
(R\uX_{1})^{C_{p^{n}}}\ar[dr]
&&(R\uX_{3})^{C_{p^{n-1}}}\ar[d]\ar[dr]
&(R\uX_{1})^{C_{p^{n-1}}}\ar[d]\ar[dr]
\\
&(R\uX)^{C_{p^{n}}}\ar[r]
&(R\Phi \uX)^{C_{p^{n-1}}}
&(R\uX_{2})^{C_{p^{n-1}}}
&\cdots
}
\]
The homotopy limit of this diagram can be constructed as an
iterated homotopy pullback and the point-set limit is an iterated
point-set pullback.  
In the particular case of $z\cX$, all the vertical non-diagonal maps
are the identity, and so the point-set limit maps to
$(R\uX)^{C_{p^{n}}}$ by an isomorphism, and this maps to the homotopy
limit by a homotopy equivalence.  Under these isomorphisms (for
$n$ varying), the maps induced on the limits by inclusion of
subdiagrams are the iterated maps
\[
\mathrm{R}^{n-m}\colon (R\uX)^{C_{p^{n}}}\to (R\uX)^{C_{p^{m}}}.
\]
The map from the sequential homotopy limit of point-set inverse limits
to the homotopy inverse limit of the whole diagram
then induces a weak equivalence from $TR_{\cyc}(\cX)$ to $TR_{z}(z\cX)$ that is
compatible with the $\mathrm{F}$ self-maps (without needing a
homotopy). 
\end{proof}

We now return to the problem of constructing $TC^{\cR}(A)$ as a point-set
functor.  We require a functor of $\cR$ and $A$ in the following sense.

\begin{defn}\label{defn:basepair}
Let $\aBPair$ denote the category where
\begin{itemize}
\item the objects are ordered pairs $(\cR,A)$ with $\cR$ a
precyclotomic base and $A$ an associative $R$-algebra, where $\uR$ is
cofibrant in the category of $\bT$-equivariant commutative ring
orthogonal spectra (for the standard model structure or the model
structure of~\cite[Thm.~C]{BM-cycl23}) and the underlying orthogonal spectrum
of $A$ is in the (non-equivariant) class $\nEQC$ of
Section~\ref{sec:equivariant} and is flat for the smash product in
orthogonal spectra.
\item the set of maps in $\aBPair$ from
$(\cR,A)$ to $(\cR',A')$ consists of the set of ordered pairs of maps
$\cf\colon \cR\to \cR'$ and $g\colon A\to f^{*}A'$ where $\cf$ is a
map of precyclotomic bases 
and $g$ is a map of $R$-algebras for $f^{*}A'$ the $R$-algebra
given by $A'$ with the $R$-algebra structure induced by the map
$f\colon R\to R'$ underlying $\cf$.
\end{itemize}
We say that a map $(\cf,g)$ in $\aBPair$ is a weak equivalence when
the map $\cf$ is a weak equivalence of precyclotomic spectra (i.e.,
$\uf$ is an $\aF_{p}$-equivalence of the underlying $\bT$-equivariant
orthogonal spectra) and $g$ is a weak equivalence.
\end{defn}

As motivated by the discussion above, we have the following functor
from $\aBPair$ to zigzag-cyclotomic spectra.

\begin{cons}\label{cons:zigzag}
Let $(\cR,A)$ be an object in $\aBPair$.  Define the zigzag-cyclotomic
object $THH_{z}^{\cR}(A)$ by
\begin{align*}
\uX_{3}&:=\Phi THH(A)\sma_{\Phi THH(R)}I(\Phi c)\\
\uX_{2}&:=\Phi THH(A)\sma_{\Phi THH(R)\otimes I}IF\\
\uX_{1}&:=\Phi THH(A)\sma_{\Phi THH(R)}If_{1}\\
\uX&:=THH(A)\sma_{THH(R)}Ic
\end{align*}
where the maps are as described at the start of the section.  We note
that this is functorial for $(\cR,A)$ in $\aBPair$ and sends weak
equivalences to $\aF_{p}$-equivalences.
\end{cons}

If we fix a precyclotomic base $\cR$ whose underlying $\bT$-equivariant
commutative ring orthogonal spectrum is cofibrant (in either the
standard model structure or the model structure of~\cite[Thm.~C]{BM-cycl23}),
then we get a functor from cofibrant $R$-algebras to $\aBPair$
(sending $A$ to $(\cR,A)$).  The following theorem asserts that
$TC_{z}(THH_{z}^{\cR}(A))$ gives a point-set 
model of $TC^{\cR}(A)$.  As a consequence, $TC_{z}(THH_{z}^{\cR}(A))$ extends  
$TC^{\cR}(A)$ to be functorial in both $\cR$ and $A$.

\begin{thm}\label{thm:zigzag}
Let $\cR$ be a precyclotomic base whose underlying $\bT$-equivariant
commutative ring orthogonal spectrum is cofibrant in the
model structure of~\cite[Thm.~C]{BM-cycl23}.  Then 
the composite functor $TC_{z}(THH_{z}^{\cR}(-))$ viewed as a functor
from cofibrant $R$-algebras to the stable category is naturally isomorphic to the
functor $TC^{\cR}(-)$.
\end{thm}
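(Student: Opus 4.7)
The plan is to reduce the statement to Theorem~\ref{thm:compTCz} by constructing a natural zigzag of $\aF_{p}$-equivalences in zigzag cyclotomic spectra between $THH_z^{\cR}(A)$ and $z\cM$, where $\cM$ is a point-set precyclotomic spectrum modeling $THH^{\cR}(A)$ with the precyclotomic structure produced by Theorem~\ref{thm:precycstructure}. Given such a zigzag, Proposition~\ref{prop:TCzisderived} ensures that $TC_z$ sends it to a natural weak equivalence, and then Theorem~\ref{thm:compTCz} identifies $TC_z(z\cM)$ with $TC_{\cyc}(\cM) = TC^{\cR}(A)$.

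To construct the comparison, I would mirror the rectification argument from the proof of Theorem~\ref{thm:precycstructure}. Choose a cofibrant approximation $\cT \to THH(R)$ in commutative ring precyclotomic spectra, a strictly precyclotomic map $g\colon \cT \to \cR$ lifting the composite $\uT \to THH(R) \to \uR$ (produced via Proposition~\ref{prop:mappingspace}), and a homotopy $G$ in commutative ring orthogonal spectra from this composite to $g$. Applying Construction~\ref{cons:zigzag} with $(\cT, g, G)$ in place of $(THH(R), c, F)$ yields an intermediate zigzag cyclotomic spectrum $THH_z^{\cT,g,G}(A)$. The data consisting of the map $\cT \to THH(R)$ and a parametrized homotopy interpolating the two mapping cylinder objects $IF$ and $IG$ induces a natural map of zigzag cyclotomic spectra $THH_z^{\cR}(A) \to THH_z^{\cT,g,G}(A)$, which is an $\aF_{p}$-equivalence by the standard derived smash product analysis (the same argument showing that the solid arrows in the diagram at the start of Section~\ref{sec:TCfunc} are weak equivalences). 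Crucially, because $g$ is strictly precyclotomic, the analogue of $F$ in the $\cT$-setting can be taken to be the constant homotopy, so the middle term $\uX_2$ in $THH_z^{\cT,g,G}(A)$ is canonically isomorphic to both $\uX_1$ and $\uX_3$, with $r_1$ and $r_2$ identities. The resulting degenerate zigzag is precisely $z\cM$, where $\cM$ is a cofibrant model of the precyclotomic $\cT$-module $THH(A)\sma_{\uT} g^*\uR$; this is by construction a point-set model of $THH^{\cR}(A)$ equipped with the canonical precyclotomic structure of Theorem~\ref{thm:precycstructure}.

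The main obstacle will be bookkeeping point-set cofibrancy simultaneously across several categories --- commutative ring precyclotomic spectra for $\cT$, commutative $\uT$- and $\Phi\uT$-algebras for the various mapping cylinders and smash products, and associative $R$-algebras for $A$ --- while keeping all choices functorial in $A$ after fixing the data $(\cT, g, G)$ attached to $\cR$. In particular, the comparison map must be a strict map of zigzag cyclotomic spectra, so the five commuting squares of Definition~\ref{defn:basepair}'s functoriality impose compatibility between the cylinder objects built from $F$ and $G$; producing this on the nose requires a careful two-variable mapping cylinder construction. Once these point-set compatibilities are in place, the homotopical conclusion is formal from Proposition~\ref{prop:TCzisderived} and Theorem~\ref{thm:compTCz}.
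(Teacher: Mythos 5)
Your high-level plan---reduce to Theorem~\ref{thm:compTCz} via Proposition~\ref{prop:TCzisderived} by building a natural zigzag of $\aF_{p}$-equivalences of zigzag cyclotomic spectra from $THH_z^{\cR}(A)$ to $z\cM$ for a precyclotomic model $\cM$ of $THH^{\cR}(A)$, using the rectified data $(\cT,q,g,G)$ from Theorem~\ref{thm:precycstructure}---is exactly the paper's strategy. The intermediate object you build by plugging $(\cT,g,G)$ into Construction~\ref{cons:zigzag} (with the constant homotopy $K$ as the middle homotopy, since $g$ is strictly precyclotomic) is also the right one. But there are two concrete errors in how you connect it to the two ends.

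First, the intermediate zigzag is \emph{not} ``precisely $z\cM$.'' Inspecting its pieces, $\uY_{3}=\Phi THH(-)\sma_{\Phi\uT}I(\Phi g)$ is a mapping cylinder on $\Phi\uR$, while $\uY_{1}=\Phi THH(-)\sma_{\Phi\uT}I(r_{\cR}\circ\Phi g)$ is a mapping cylinder on $\uR$ (and $\uY=THH(-)\sma_{\uT}Ig$ has no $\Phi$ at all); these are not even abstractly isomorphic, and neither $r_{1}$ nor $r_{2}$ is the identity. Even in the genuinely degenerate case $z\cM$, the map $r_{2}$ is $r_{\cM}$, not the identity, so your assertion that ``$r_{1}$ and $r_{2}$ are identities'' cannot match the target. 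What actually happens is that there is a separate map $\cY\to\cZ=z(THH(-)\sma_{\uT}Ig)$, built componentwise from $r_{THH(-)}$, $r_{\cT}$, the lax monoidal structure of $\Phi$, and the collapse $\uT\otimes I\to\uT$; this map is an $\aF_{p}$-equivalence precisely because $THH(A)$ and $\cT$ are \emph{cyclotomic} (their structure maps are $\aF_{p}$-equivalences), not merely precyclotomic. Skipping this step elides the place where the cyclotomicity hypothesis on $\cT$ and $THH$ enters.

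Second, your comparison between $\cX=THH_z^{\cR}(A)$ and the intermediate $\cY$ is pointed the wrong way. You claim a map $\cX\to\cY$, but given the ring map $q\colon\uT\to THH(R)$, the functorial map of relative smash products goes from the coarser relation to the finer one: $M\sma_{\uT}N\to M\sma_{THH(R)}N'$ once one supplies a compatible $\uT$-algebra map $N\to q^{*}N'$. So the correct direction is $\cY\to\cX$; the paper constructs this using $q$, $\Phi q$, the homotopy $G$, and the square $H$. Going the other way would require extra (non-natural) section data that you have not supplied. With the direction corrected and the middle $\aF_{p}$-equivalence $\cY\to\cZ$ added (each arrow justified by the cyclotomicity of $THH(A)$ and $\cT$ or by $q$ being an $\aF_{p}$-equivalence), the argument does reduce to Proposition~\ref{prop:TCzisderived} and Theorem~\ref{thm:compTCz} as you intended.
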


The proof of the theorem fills the remainder of this section. 

As per the statement $\cR$ is fixed, and as in
Section~\ref{sec:relcyc} we fix a cofibrant approximation $q\colon
\cT\to THH(R)$ in the category of commutative ring precyclotomic
spectra, a map of commutative ring precyclotomic spectra $g\colon
\cT\to \cR$ and a path $H$ from the image of $g$ in the homotopy
equalizer
\[
\xymatrix@C-1pc{%
r_{\cT}^{*},r_{\cR*}\circ \Phi\colon \aCom^{\bT}(\uT,\uR)\ar@<-.5ex>[r]\ar@<.5ex>[r]
&\aCom^{\bT}(\Phi \uT,\uR)
}
\]
to the element specified by $c\circ q$ and $F\circ q$.  Such a path is
adjoint to a map of $\bT$-equivariant commutative ring orthogonal spectra
\[
\Phi \uT\otimes I^{2}\to \uR
\]
which by abuse of notation, we will also denote by $H$.  The four
faces of the square $I^{2}$ are the following homotopies $\Phi
\uT\otimes I\to \uR$:
\begin{align*}
[0,1]\times \{0\}&: F\circ q\\
[0,1]\times \{1\}&: K=\text{constant homotopy }g\circ r_{\cT}=r_{\cR}\circ \Phi g\\
\{0\}\times [0,1]&: r_{\cR}\circ \Phi G\\
\{1\}\times [0,1]&: G\circ r_{\cT}
\end{align*}
where $G$ is the homotopy from $g$ to $c\circ q$, as in the notation
in the proof of Theorem~\ref{thm:precycstructure}.  Since $\uR$ is
cofibrant as a $\bT$-equivariant commutative ring orthogonal spectrum,
the derived functor $THH(A)\sma_{\uT}^{\bL}g^{*}\uR$ in that proof is
represented by $THH(A)\sma_{\uT}I(g)$ (when $A$ is a cofibrant 
$R$-algebra as in the hypothesis of Theorem~\ref{thm:zigzag}), where
the action of $\uT$ on $THH(A)$ is always via $q\colon \uT\to THH(R)$.

To prove Theorem~\ref{thm:zigzag}, it suffices to construct a
natural zigzag of $\aF_{p}$-equivalences of zigzag-cyclotomic spectra
relating $\cX=THH_{z}^{\cR}(-)$ and $\cZ=z(THH(-)\sma_{\uT}I(g))$.
Let $\cY$ be the functor from cofibrant $R$-algebras to zigzag-cyclotomic spectra defined by
\begin{align*}
\uY_{3}&=\Phi THH(-)\sma_{\Phi \uT}I(\Phi g)\\
\uY_{2}&=\Phi THH(-)\sma_{\Phi \uT\otimes I}IK\\
\uY_{1}&=\Phi THH(-)\sma_{\Phi \uT}I(r_{\cR}\circ \Phi g)\\
\uY&=THH(-)\sma_{\uT}Ig
\end{align*}
with maps
\begin{itemize}
\item $\Phi \uY \from \uY_{3}$ induced by the lax symmetric monoidal structure map for $\Phi$;
\item $\uY_{3} \to \uY_{2}$ the composite
\begin{multline*}
\qquad \qquad  \Phi THH(-)\sma_{\Phi \uT}I(\Phi g)
\to \Phi THH(-)\sma_{\Phi \uT}I(r_{\cR}\circ \Phi g)\\
\to \Phi THH(-)\sma_{\Phi \uT\otimes I}IK
\end{multline*}
where the second map is induced by the inclusion of $\uT\otimes \{0\}$
in $\uT\otimes I$ and the first map is induced by the map $I(\Phi
g)\to I(r_{\cR}\circ \Phi g)$ (induced by $r_{\cR}\colon \Phi
\uR\to \uR$);
\item $\uY_{2}\from \uY_{1}$ induced by the inclusion of $\uT\otimes
\{1\}$ in $\uT\otimes I$; and
\item $\uY_{1}\to \uY$ induced by the precyclotomic structure maps 
$r_{THH(-)}$ and $r_{\cT}$ (using the precyclotomic equation $r_{\cR}\circ \Phi
g=g\circ r_{\cT}$). 
\end{itemize}
We then have a natural map of zigzag-cyclotomic spectra 
\[
\cY\to \cZ=z(THH(-)\sma_{\uT}I(g))
\]
with component maps 
\[
\uY_{3}=\Phi THH(-)\sma_{\Phi \uT}I(\Phi g) \to \Phi (THH(-)\sma_{\uT}Ig)=\uZ_{3}
\]
induced by the lax symmetric monoidal structure map of $\Phi$;
\[
\uY_{2}=\Phi THH(-)\sma_{\Phi \uT\otimes I}IK \to THH(-)\sma_{\uT}Ig=\uZ_{2}
\]
induced by $r_{THH(-)}$, $r_{\cT}$, and the collapse map $\uT\otimes
I\to \uT$;
\[
\uY_{1}=\Phi THH(-)\sma_{\Phi \uT}I(r_{\cR}\circ \Phi g)\to 
THH(-)\sma_{\uT}Ig=\uZ_{1}
\]
the map induced by $r_{THH(-)}$ and $r_{\cT}$; and 
\[
\uY=THH(-)\sma_{\uT}Ig=\uZ
\]
the identity.  Because $THH(A)$ and $\cT$ are cyclotomic,
each of these maps is an $\aF_{p}$-equivalence.

We construct the natural map of zigzag-cyclotomic spectra $\cY\to
\cX=THH_{z}^{\cR}(-)$ as follows.  The map 
\[
\uY_{3}=\Phi THH(-)\sma_{\Phi \uT}I(\Phi g) \to \Phi THH(-)\sma_{\Phi THH(R)}I(\Phi c)=\uX_{3}
\]
is induced by the map $q\colon \uT\to THH(R)$ and a map of
commutative $\uT$-algebras $I(\Phi g)\to (\Phi q)^{*}I(\Phi c)$ we now
define.  Using the multiplication by 2 isomorphism $[0,1]\iso [0,2]$,
$I(\Phi g)$ is isomorphic to 
\[
(\Phi \uT\otimes [0,1])\cup_{\Phi \uT\otimes \{1\}}
(\Phi \uT\otimes [1,2])\cup_{\Phi \uT\otimes \{2\}}
(\Phi g)^{*}\uR
\]
(where the pushout is done in $\bT$-equivariant commutative ring orthogonal
spectra, i.e., ``$\cup$''$=\sma$).  The $\Phi \uT\otimes
[0,1]$ piece maps by $\Phi q\otimes [0,1]$, and the $\Phi \uT\otimes
[1,2]$ piece maps 
using the subtract 1 isomorphism $[1,2]\to [0,1]$ and the homotopy
$\Phi G$ (which starts at $\Phi c\circ \Phi q$ and ends at $\Phi g$).
The map 
\[
\uY_{2}=\Phi THH(-)\sma_{\Phi \uT\otimes I}IK \to 
\Phi THH(-)\sma_{\Phi THH(R)\otimes I}IF=\uX_{2}
\]
is induced by the map of commutative $(\cT\otimes I)$-algebras 
\[
IK\iso
(\Phi \uT\otimes I\otimes [0,1])\cup_{\Phi \uT\otimes \{1\}}
(\Phi \uT\otimes I\otimes [1,2])\cup_{\Phi \uT\otimes \{2\}}
K^{*}\uR
\to (\Phi q\otimes I)^{*}IF
\]
that sends $\Phi \uT\otimes I\otimes [0,1]$ by $\Phi q\otimes I\otimes
[0,1]$ and maps $\Phi \uT\otimes I\otimes [1,2]$ using $H$ (which can
be viewed as a homotopy from $F\circ q$ to $K\colon \Phi \uT\otimes
I\to \uR$).   The maps
\begin{gather*}
\uY_{1}=\Phi THH(-)\sma_{\Phi \uT}I(r_{\cR}\circ \Phi g)
\to \Phi THH(-)\sma_{\Phi THH(R)}I(f_{1})=\uX_{1}\\
\uY=THH(-)\sma_{\uT}Ig
\to THH(-)\sma_{THH(R)}Ic=\uX
\end{gather*}
are defined in the same way as the maps above but with $\uY_{1}\to
\uX_{1}$ using $\Phi q\otimes I$ and $G\circ r_{\cT}$ and $\uY\to \uX$
using $q\otimes I$ and $G$.  A straightforward check of diagrams
(using the precyclotomic equations $q\circ r_{\cT}=r_{THH(R)}\circ
\Phi q$ and $g\circ r_{\cT}=r_{\cR}\circ \Phi g$) shows that these
maps construct a map of zigzag-cyclotomic spectra.  Because the map
$\uT\to THH(R)$ is by definition an $\aF_{p}$-equivalence, the
resulting map $\cY\to \cX$ is an $\aF_{p}$-equivalence of zigzag-cyclotomic spectra.


\bibliographystyle{plain}
\bibliography{bluman}

\ifx\nolinkurl\undefined\def\nolinkurl#1{\texttt{\chardef~126\relax #1}}\fi
\begin{thebibliography}{10}

\bibitem{ABGHLM}
Vigleik Angeltveit, Andrew~J. Blumberg, Teena Gerhardt, Michael~A. Hill, Tyler
  Lawson, and Michael~A. Mandell.
\newblock Topological cyclic homology via the norm.
\newblock {\em Doc. Math.}, 23:2101--2163, 2018.

\bibitem{BachmannHoyois}
Tom Bachmann and Marc Hoyois.
\newblock Norms in motivic homotopy theory.
\newblock {\em Ast\'{e}risque}, (425):ix+207, 2021.

\bibitem{BMS}
Bhargav Bhatt, Matthew Morrow, and Peter Scholze.
\newblock Integral {$p$}-adic {H}odge theory.
\newblock {\em Publ. Math. Inst. Hautes \'{E}tudes Sci.}, 128:219--397, 2018.

\bibitem{BMS2}
Bhargav Bhatt, Matthew Morrow, and Peter Scholze.
\newblock Topological {H}ochschild homology and integral {$p$}-adic {H}odge
  theory.
\newblock {\em Publ. Math. Inst. Hautes \'{E}tudes Sci.}, 129:199--310, 2019.

\bibitem{BlumbergHill-NormsTransfers}
Andrew~J. Blumberg and Michael~A. Hill.
\newblock Operadic multiplications in equivariant spectra, norms, and
  transfers.
\newblock {\em Adv. Math.}, 285:658--708, 2015.

\bibitem{BM-tc}
Andrew~J. Blumberg and Michael~A. Mandell.
\newblock Localization theorems in topological {H}ochschild homology and
  topological cyclic homology.
\newblock {\em Geom. Topol.}, 16(2):1053--1120, 2012.

\bibitem{BM-cycl}
Andrew~J. Blumberg and Michael~A. Mandell.
\newblock The homotopy theory of cyclotomic spectra.
\newblock {\em Geom. Topol.}, 19(6):3105--3147, 2015.

\bibitem{BM-cycl23}
Andrew~J. Blumberg and Michael~A. Mandell.
\newblock The homotopy theory of cyclotomic spectra, 10 years later.
\newblock Preprint, arXiv:2412.15928, 2024.

\bibitem{BHM}
M.~B{\"o}kstedt, W.~C. Hsiang, and I.~Madsen.
\newblock The cyclotomic trace and algebraic {$K$}-theory of spaces.
\newblock {\em Invent. Math.}, 111(3):465--539, 1993.

\bibitem{BousfieldKan}
A.~K. Bousfield and D.~M. Kan.
\newblock {\em Homotopy limits, completions and localizations}.
\newblock Lecture Notes in Mathematics, Vol. 304. Springer-Verlag, Berlin,
  1972.

\bibitem{Brun-MUP}
M.~Brun.
\newblock Witt vectors and equivariant ring spectra applied to cobordism.
\newblock {\em Proc. Lond. Math. Soc. (3)}, 94(2):351--385, 2007.

\bibitem{Greenlees-forms}
J.~P.~C. Greenlees.
\newblock Equivariant forms of connective {$K$}-theory.
\newblock {\em Topology}, 38(5):1075--1092, 1999.

\bibitem{HahnYuan-Snaith}
Jeremy Hahn and Allen Yuan.
\newblock Exotic multiplications on periodic complex bordism.
\newblock {\em J. Topol.}, 13(4):1839--1852, 2020.

\bibitem{HHR}
M.~A. Hill, M.~J. Hopkins, and D.~C. Ravenel.
\newblock On the nonexistence of elements of {K}ervaire invariant one.
\newblock {\em Ann. of Math. (2)}, 184(1):1--262, 2016.

\bibitem{LMS}
L.~G. Lewis, Jr., J.~P. May, M.~Steinberger, and J.~E. McClure.
\newblock {\em Equivariant stable homotopy theory}, volume 1213 of {\em Lecture
  Notes in Mathematics}.
\newblock Springer-Verlag, Berlin, 1986.
\newblock With contributions by J. E. McClure.

\bibitem{Madsen-Traces}
Ib~Madsen.
\newblock Algebraic {$K$}-theory and traces.
\newblock In {\em Current developments in mathematics, 1995 ({C}ambridge,
  {MA})}, pages 191--321. Int. Press, Cambridge, MA, 1994.

\bibitem{MM}
M.~A. Mandell and J.~P. May.
\newblock Equivariant orthogonal spectra and {$S$}-modules.
\newblock {\em Mem. Amer. Math. Soc.}, 159(755):x+108, 2002.

\bibitem{MMSS}
M.~A. Mandell, J.~P. May, S.~Schwede, and B.~Shipley.
\newblock Model categories of diagram spectra.
\newblock {\em Proc. London Math. Soc. (3)}, 82(2):441--512, 2001.

\bibitem{May-Alaska}
J.~P. May.
\newblock {\em Equivariant homotopy and cohomology theory}, volume~91 of {\em
  CBMS Regional Conference Series in Mathematics}.
\newblock Published for the Conference Board of the Mathematical Sciences,
  Washington, DC, 1996.
\newblock With contributions by M. Cole, G. Comeza{\~n}a, S. Costenoble, A. D.
  Elmendorf, J. P. C. Greenlees, L. G. Lewis, Jr., R. J. Piacenza, G.
  Triantafillou, and S. Waner.

\bibitem{MSV}
J.~McClure, R.~Schw\"{a}nzl, and R.~Vogt.
\newblock {$THH(R)\cong R\otimes S^1$} for {$E_\infty$} ring spectra.
\newblock {\em J. Pure Appl. Algebra}, 121(2):137--159, 1997.

\bibitem{NikolausScholze}
Thomas Nikolaus and Peter Scholze.
\newblock On topological cyclic homology.
\newblock {\em Acta Math.}, 221(2):203--409, 2018.

\bibitem{Schwede-Global}
Stefan Schwede.
\newblock {\em Global homotopy theory}, volume~34 of {\em New Mathematical
  Monographs}.
\newblock Cambridge University Press, Cambridge, 2018.

\bibitem{Shipley-Convenient}
Brooke Shipley.
\newblock A convenient model category for commutative ring spectra.
\newblock In {\em Homotopy theory: relations with algebraic geometry, group
  cohomology, and algebraic {$K$}-theory}, volume 346 of {\em Contemp. Math.},
  pages 473--483. Amer. Math. Soc., Providence, RI, 2004.

\bibitem{Sinha-TBordism}
Dev~P. Sinha.
\newblock Computations of complex equivariant bordism rings.
\newblock {\em Amer. J. Math.}, 123(4):577--605, 2001.

\bibitem{Snaith-MUP}
Victor Snaith.
\newblock Localized stable homotopy of some classifying spaces.
\newblock {\em Math. Proc. Cambridge Philos. Soc.}, 89(2):325--330, 1981.

\bibitem{Yuan-Frob}
Allen Yuan.
\newblock Integral models for spaces via the higher {F}robenius.
\newblock {\em J. Amer. Math. Soc.}, 36(1):107--175, 2023.

\end{thebibliography}

\end{document}